\renewcommand{\setminus}{\smallsetminus}
\newcommand{\allora}{\Longrightarrow}
\DeclareMathOperator{\immagine}{im}
\newcommand{\im}{\immagine}
\DeclareMathOperator{\Impart}{Im}
\renewcommand{\Im}{\Impart}
\newcommand{\inj}{\hookrightarrow}
\newcommand{\surj}{\twoheadrightarrow}
\newcommand{\iso}{\xrightarrow{\sim}}
\newcommand{\longiso}{\overset{\sim}{\longrightarrow}}
\newcommand{\numberset}{\mathbb}
\newcommand{\Z}{\numberset{Z}}
\newcommand{\Q}{\numberset{Q}}
\newcommand{\R}{\numberset{R}}
\newcommand{\C}{\numberset{C}}
\renewcommand{\phi}{\varphi}
\renewcommand{\theta}{\vartheta}
\renewcommand{\epsilon}{\varepsilon}
\DeclareFontFamily{U}{wncy}{}
\DeclareFontShape{U}{wncy}{m}{n}{<->wncyr10}{}
\DeclareSymbolFont{mcy}{U}{wncy}{m}{n}
\DeclareMathSymbol{\sha}{\mathord}{mcy}{"58}
\newcommand{\F}{\mathbb{F}}
\newcommand{\id}{\mathrm{id}}
\DeclareMathOperator{\coker}{coker}
\newcommand{\trasp}[1]{\tensor*[^{t}]{#1}{}}
\newcommand{\traspinv}[1]{\tensor*[^{t}]{#1}{^{-1}}}
\DeclareMathOperator{\End}{End}
\DeclareMathOperator{\Hom}{Hom}
\DeclareMathOperator{\GL}{GL}
\DeclareMathOperator{\SL}{SL}
\DeclareMathOperator{\Gal}{Gal}
\newcommand{\rotdxsim}{\rotatebox{-90}{$\sim$}}
\newcommand{\rotsupseteq}{\rotatebox{-90}{$\supseteq$}}
\renewcommand{\projlim}{\varprojlim}
\renewcommand{\injlim}{\varinjlim}
\newcommand{\cont}{\textup{cont}}
\newcommand{\crys}{\textup{crys}}
\newcommand{\ur}{\textup{ur}}
\newcommand{\ord}{\textup{ord}}
\newcommand{\str}{\textup{str}}
\newcommand{\iw}{\textup{Iw}}
\newcommand{\et}{\textup{\'{e}t}} 
\newcommand{\fs}{\textup{fs}}
\newcommand{\divisible}{\textup{div}}
\DeclareMathOperator{\cohomology}{H}
\DeclareMathOperator{\tildecohomology}{\widetilde{H}}
\newcommand{\etcohomology}{\cohomology_\et}
\newcommand{\hone}{\cohomology^1}
\newcommand{\honef}{\cohomology^1_f}
\newcommand{\honetildef}{\tildecohomology^1_f}
\newcommand{\honetildefiw}{\tildecohomology^1_{f, \iw}}
\newcommand{\honeur}{\cohomology^1_\ur}
\newcommand{\hones}{\cohomology^1_s}
\newcommand{\ho}{\cohomology^0}
\DeclareMathOperator{\res}{\mathrm{res}}
\DeclareMathOperator{\cores}{\mathrm{cores}}
\newcommand{\contcochains}{\mathrm{C}_\cont}
\newcommand{\contcohocomplex}{\contcochains^\bullet}
\newcommand{\contcoho}{\contcohocomplex}
\DeclareMathOperator{\chow}{CH}
\newcommand{\AJ}{\mathrm{AJ}}
\newcommand{\imajt}{\widetilde{\Lambda}_\p}
\newcommand{\shat}{\widetilde{\sha}_{\p^\infty}}
\newcommand{\cl}{\mathrm{cl}}
\DeclareMathOperator{\Ext}{Ext}
\DeclareMathOperator{\red}{red}
\DeclareMathOperator{\Tr}{Tr}
\newcommand{\Gr}{\textup{Gr}}
\DeclareMathOperator{\infl}{infl}
\renewcommand{\O}{\mathcal{O}}
\newcommand{\K}{\mathcal{K}}
\newcommand{\p}{\mathfrak{p}}
\newcommand{\mfrak}{\mathfrak{m}}
\newcommand{\afrak}{\mathfrak{a}}
\newcommand{\Nfrak}{\mathfrak{N}}
\DeclareMathOperator{\Frob}{Frob}
\newcommand{\calF}{\mathcal{F}}
\newcommand{\calX}{\mathcal{X}}
\newcommand{\calY}{\mathcal{Y}}
\newcommand{\calZ}{\mathcal{Z}}
\newcommand{\calI}{\mathcal{I}}
\newcommand{\triv}{\mathds{1}}
\newcommand{\calG}{\mathcal{G}}
\DeclareMathOperator{\charpol}{char}
\newcommand{\ccat}{\mathscr{C}}
\DeclareMathOperator{\D}{D}
\newcommand{\diff}{\mathrm{d}}
\DeclareMathOperator{\Cone}{Cone}
\newcommand{\ringmod}[1]{(#1\text{-} \mathrm{Mod})}
\newcommand{\rmod}{\ringmod{R}}
\newcommand{\rgmod}{\ringmod{R[G]}}
\newcommand{\rgksmod}{\ringmod{R[G_{K, S}]}}
\newcommand{\omod}{\ringmod{\O}}
\newcommand{\ogksmod}{\ringmod{\O[G_{K,S}]}}
\newcommand{\lambdamod}{\ringmod{\Lambda}}
\newcommand{\ad}{\textup{ad}}
\newcommand{\indad}{\textup{ind-ad}}
\newcommand{\ft}{\textup{ft}}
\newcommand{\coft}{\textup{coft}}
\newcommand{\ev}{\mathrm{ev}}
\newcommand{\adj}{\mathrm{adj}}
\DeclareMathOperator{\selmers}{\widetilde{\mathbb{R}\Gamma}}
\newcommand{\selmer}{\selmers_f}
\newcommand{\selmeriw}{\selmers_{f, \iw}}
\newcommand{\kugasato}[1]{\tilde{\mathcal{E}}_{#1}}
\newcommand{\kugasatoplain}[2]{\kugasato{\Gamma(#1)}^{#2}}
\DeclareMathOperator{\corr}{Corr}
\newcommand{\Isog}{\mathrm{Isog}}
\DeclareMathOperator{\Graphfun}{Graph}
\newcommand{\squarefree}{\mathscr{K}}
\DeclareMathOperator{\loc}{loc}
\newcommand{\dkol}[1]{\mathcal{D}(#1)}
\theoremstyle{definition}
\newtheorem{definition}{Definition}[section]
\theoremstyle{plain}
\newtheorem{theorem}[definition]{Theorem}
\newtheorem{lemma}[definition]{Lemma}
\newtheorem{proposition}[definition]{Proposition}
\newtheorem{corollary}[definition]{Corollary}		
\theoremstyle{remark}
\newtheorem{remark}[definition]{\textsc{Remark}}
\newtheorem{example}[definition]{Example}
\newtheorem{assumption}[definition]{\textsc{Assumption}}
\title[$\shat(f/K) = 0$ and Anticyclotomic Iwasawa Theory]{Vanishing of the $\p$-part of the Shafarevich--Tate group of a modular form and its consequences for Anticyclotomic Iwasawa Theory}
\author{Luca Mastella}
\date{}
\subjclass[2010]{Primary 11R23, Secondary 11F11}
\thanks{}
\address{}
\email{luca.mastella@gmail.com}
\keywords{Modular forms,  Iwasawa Theory,  generalized Heegner Cycles,  Shafarevich--Tate group}
\begin{document}

\maketitle

\begin{abstract}
In this article we prove a refinement of a theorem of Longo and Vigni in the anticyclotomic Iwasawa theory for modular forms. 
More precisely we give a definition for the ($\p$-part of the) Shafarevich--Tate groups $\shat(f/K)$ and $\shat(f/K_\infty)$ 
of a modular form $f$ of weight $k >2$,  over an imaginary quadratic field $K$ satisfying the \emph{Heegner hypothesis} 
and over its \emph{anticyclotomic $\Z_p$-extension} $K_\infty$ and we 
show that if the \emph{basic generalized Heegner cycle} $z_{f, K}$ is non-torsion and not divisible by $p$,  then 
$\shat(f/K) = \shat(f/K_\infty) = 0$.
\end{abstract}

Let $f = \sum_{n > 0} a_n q^n$ be a cuspidal newform of even weight $k > 2$ and level $\Gamma_0(N)$, fix an odd prime $p \nmid N$ 
and an embedding $i_p \colon \bar{\Q} \inj \bar{\Q}_p$. Denote by $F$ the totally real field generated over $\Q$ by the Fourier coefficients $a_n$ of $f$ 
and let $\O_F$ be its ring of integers. The embedding $F \inj \bar{\Q}_p$ induced by $i_p$ defines a prime ideal $\p$ of $\O_F$ above $p$: 
let $\K := F_\p$ be the completion of $F$ at $\p$ and let $\O$ be its valuation ring. 
Deligne (see \cite[]{deligne:formes-modulaire}) attached to $f$ and $\p$ a $p$-adic Galois representation $W_\p$ of $G_\Q = \Gal(\bar{\Q}/\Q)$, 
that is a $2$-dimensional $\K$-vector space endowed with a continuous $G_{\Q}$-action. Let $V = W_\p(k/2)$ be the $k/2$-th Tate twist of $W_\p$ and let $T$ 
be the self-dual lattice inside $V$, defined in \cite{nekovar:kuga-sato}; set $A = V/T$. 

Fix moreover an imaginary quadratic field $K$ of discriminant $d_K \ne -3, -4$ coprime with $Np$ and satisfying the so called \emph{Heegner hypothesis}, 
i.e.~such that the prime factors of $N$ split in $K$, and let $K_\infty$ be its anticyclotomic $\Z_p$-extension, 
that is the unique $\Z_p$-extension of $K$ which is pro-dihedral over $\Q$. Put $\Gamma = \Gal(K_\infty/K) \cong \Z_p$ and let 
$\Lambda = \O \llbracket \Gamma \rrbracket \cong \O \llbracket T \rrbracket$ be the Iwasawa algebra. 

The main result of \cite{longo-vigni:generalized} is a structure theorem (as $\Lambda$-module) for the Pontryagin dual $\calX_\infty$ 
of the Bloch--Kato Selmer group $\honef(K_\infty, A)$ of $A$ over $K_\infty$: they show, under some hypothesis on $(f, K, \p)$ as in particular the 
$\p$-ordinarity of $f$ and the big image property for $V_\p := W_\p^\ast$, that $\calX_\infty$ is pseudo-isomorphic to $\Lambda \oplus M \oplus M$, 
for a torsion $\Lambda$-module $M$, moreover they formulate an anticyclotomic main conjecture in this setting  
and they prove one divisibility of it. 

In particular this shows that $\honef(K_\infty, A)$ has corank $1$ as $\Lambda$-module; the aim of this paper is to show that 
(under some technical assumptions) if the \emph{basic generalized Heegner cycle} $z_{f, K}$ (see Section \ref{sec:shafarevich-tate-mod-forms}) 
is not divisible by $p$ in $\hone(K, T)$, then $\honef(K_\infty, A)$ is in fact cofree of corank $1$ over $\Lambda$. 
We give moreover a suitable definition of the ($\p$-part of the) Shafarevich--Tate groups $\shat(f/K), \shat(f/K_\infty)$ of $f$ over $K$ and $K_\infty$ 
(see Definition \ref{def:image-abel-jacobi-K}, \ref{def:shat-K-infty}) in terms of the $p$-adic Abel-Jacobi maps 
\[
\AJ^\et_{K[n]} \colon \chow^{k-1}(X_{k-2}/K[n])_0  \otimes \O \to \hone(K[n], T),
\]
where $K[n]$ denote the ring class field of conductor $n > 1$ and $X_{k-2}$ is the \emph{generalized Kuga--Sato variety} of \cite{bdp:generalized}: 
the product of the Kuga--Sato variety $W_{k-2}$ of level $\Gamma_1(N)$ and dimension $k-1$, i.e.~the canonical desingularization of the $(k-2)$-fold self-fiber product of the universal generalized elliptic curve with 
$\Gamma_1(N)$-level structure, and the $(k-2)$-fold self-product of 
a fixed CM elliptic curve $A$ defined over $K[1]$. With this language the main theorem of this paper becomes:
\begin{theorem}[Theorem \ref{th:main}]\label{th:main-intro}
Under the assumptions of Section \ref{sec:framework}, suppose moreover that the basic generalized Heegner cycle $z_{f, K}$ is non-torsion and that 
$z_{f, K}$ is not divisible by $p$ in $\hone(K, T)$. Then $\shat(f/K)=0$ and 
\[
\honef(K, A) = z_{f, K} \cdot \K/\O,
\] 
moreover $\shat(f/K_\infty) = 0$ and $\honef(K_\infty, A)$ is cofree of corank $1$ over $\Lambda$.
\end{theorem}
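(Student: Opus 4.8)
The plan is to establish the two assertions over $K$ first, and then to deduce the Iwasawa-theoretic statements over $K_\infty$ from them together with the structure theorem of \cite{longo-vigni:generalized} and a control argument. Over $K$, the basic generalized Heegner cycle $z_{f, K} \in \hone(K, T)$ is the image under the $p$-adic Abel--Jacobi map of the basic generalized Heegner cycle on $X_{k-2}$, corestricted from $K[1]$ down to $K$, and by \cite{bdp:generalized} the collection of these cycles over the ring class fields $K[n]$ satisfies Euler-system norm-compatibilities. I would run Kolyvagin's descent in the form developed by Nekov\'{a}\v{r} \cite{nekovar:kuga-sato} for modular forms of weight $k > 2$, adapted to the generalized-Heegner-cycle setting: using the $\p$-ordinarity of $f$ and the big image property of $V_\p$ from Section \ref{sec:framework}, one builds, for suitable squarefree products $n$ of Kolyvagin primes, derivative classes $\kappa_n \in \hone(K, T/\p^m)$ whose local components are governed by the Hecke eigenvalues $a_\ell$, and one derives the standard bound on $\honef(K, A)$ in terms of the $\p$-divisibility of the bottom class $\kappa_1$, which is the reduction of $z_{f, K}$.

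The indivisibility hypothesis says exactly that $\kappa_1$ is nonzero modulo $\p$ (note that $\hone(K, T)$ is $\O$-torsion-free, since the big image property forces $A^{G_K} = 0$). Kolyvagin's argument then forces $\honef(K, T/\p)$ to be one-dimensional over $\O/\p$ and, passing to the limit over $\p$-power coefficients, $\honef(K, A)[\p]$ to be one-dimensional as well; since $z_{f, K}$ is non-torsion, the submodule $z_{f, K} \cdot \K/\O \cong \K/\O$ already exhausts this $\p$-torsion, whence $\honef(K, A) = z_{f, K} \cdot \K/\O$. Because $z_{f, K}$ belongs to the image of the Abel--Jacobi map over $K$, which is contained in $\honef(K, A)$, the exact sequence defining $\shat(f/K)$ (Definition \ref{def:image-abel-jacobi-K}) degenerates and $\shat(f/K) = 0$.

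For $K_\infty$, put $\calX_\infty = \honef(K_\infty, A)^\vee$. By \cite{longo-vigni:generalized} it is pseudo-isomorphic to $\Lambda \oplus M \oplus M$ with $M$ torsion, so $\rk_\Lambda \calX_\infty = 1$. A control theorem — valid here because, under $\p$-ordinarity, the Bloch--Kato local condition at the primes of $K$ above $p$ agrees with the ordinary one and hence interpolates correctly along the anticyclotomic tower, which is unramified outside $p$ — identifies $(\calX_\infty)_\Gamma$ with $\calX_K := \honef(K, A)^\vee$, and by the preceding step $\calX_K \cong \O$ is free of rank $1$ over $\O$. Consequently $\calX_\infty / \mfrak_\Lambda \calX_\infty \cong (\calX_\infty)_\Gamma \otimes_\O \O/\p$ is one-dimensional over $\O/\p$, so Nakayama's lemma over the complete local ring $\Lambda$ shows that $\calX_\infty$ is cyclic, say $\calX_\infty \cong \Lambda/I$; since $\rk_\Lambda \calX_\infty = 1$ and $\Lambda$ is an integral domain, $I = 0$. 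Thus $\calX_\infty \cong \Lambda$, i.e.\ $\honef(K_\infty, A)$ is cofree of corank $1$ over $\Lambda$, and $\shat(f/K_\infty) = 0$ then follows from Definition \ref{def:shat-K-infty} (equivalently, by Iwasawa descent from $\shat(f/K) = 0$).

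I expect the main difficulty to be twofold. First, getting the \emph{exact} vanishing $\shat(f/K) = 0$, rather than just a bound, requires that every error term in Kolyvagin's estimate vanishes — the contributions of $\hone$ at the primes dividing $N$, of $\ho$ of the residual representation, and of the Galois image — which is precisely what the big image hypothesis and the remaining assumptions of Section \ref{sec:framework} are arranged to guarantee. Second, and more delicate in the anticyclotomic setting, is verifying that the descent map $(\calX_\infty)_\Gamma \to \calX_K$ has \emph{trivial} kernel and cokernel rather than merely finite ones: the local terms at the one or two primes of $K$ above $p$ have to be analyzed directly through the ordinary filtration, and it is exactly this point that upgrades the conclusion from ``pseudo-isomorphic to $\Lambda$'' to ``isomorphic to $\Lambda$'', hence to genuine cofreeness of $\honef(K_\infty, A)$.
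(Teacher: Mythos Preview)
Your argument over $K$ and for the cofreeness of $\honef(K_\infty, A)$ is correct and matches the paper's approach: Theorem~\ref{th:besser} runs exactly the Kolyvagin--Nekov\'a\v{r}--Besser descent you describe to get $\honef(K, A) = z_{f,K}\cdot\K/\O$ and $\shat(f/K)=0$, and Theorem~\ref{th:main-selmer-version} combines the exact control theorem $(\calX_\infty)_\Gamma \cong \calX \cong \O$ with Nakayama and the Longo--Vigni rank-one result to conclude $\calX_\infty \cong \Lambda$.

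There is, however, a genuine gap in your deduction of $\shat(f/K_\infty)=0$. Knowing $\calX_\infty\cong\Lambda$ tells you only that $\calZ_\infty := D\bigl(\shat(f/K_\infty)\bigr)$ is an ideal of $\Lambda$; Definition~\ref{def:shat-K-infty} by itself does not force this ideal to be zero, and ``Iwasawa descent from $\shat(f/K)=0$'' is not enough either. Indeed, combining $\shat(f/K)=0$ with exact control gives $(\calY_\infty)_\Gamma\cong\O$ for $\calY_\infty := D\bigl(\imajt(K_\infty)\otimes\K/\O\bigr)\cong\Lambda/\calZ_\infty$, but this only yields $\calZ_\infty\subseteq(\gamma-1)\Lambda$, and the possibility $\calZ_\infty=(\gamma-1)\Lambda$ is not excluded by descent from $K$ alone. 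The paper (Theorem~\ref{th:main-sha}) closes this gap by a different route: it shows that if $\calY_\infty$ were $\Lambda$-torsion then, since $\calY_\infty=\imajt(K_\infty)^\ast$ has no $p$-torsion, $\imajt(K_\infty)$ would be a finite free $\O$-module, hence would stabilize at some layer $K_m$, and then the universal-norm module $N\imajt(K_\infty)=\projlim_{n,\,\cores}\imajt(K_n)$ would vanish. This is ruled out by the explicit nonzero element $\tilde{\kappa}_1\in N\imajt(K_\infty)$ built from the norm-compatible classes $\alpha_n=\cores_{K[p^{n+1}]/K_n}(z_{f,p^{n+1}})$, whose nontriviality is \cite[Theorem~4.12]{longo-vigni:generalized}. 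That input from the Heegner cycles \emph{up the tower}, not merely the indivisibility of the bottom class $z_{f,K}$, is the missing ingredient in your sketch.
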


The proof of Theorem \ref{th:main-intro} goes along the same lines of \cite{nekovar:kolyvagin}, which proves an analogous result for elliptic curves. 
In particular we first obtain the following vanishing result for $\shat(f/K)$, adapting to our context \cite{besser:finiteness-sha}, that used 
the \emph{(classical) Heegner cycles} of \cite{nekovar:kuga-sato}, while we need the generalized ones of \cite{bdp:generalized}.
\begin{theorem}[Theorem \ref{th:besser}]\label{th:besser-intro}
Let $p$ be a non-exceptional prime (Definition \ref{def:admissible-primes}) and $z_{f, K}$ be non-torsion in $\hone(K, T)$. Then 
\[
p^{2\calI_p} \shat(f/K) = 0,
\]
where $\calI_p$ is the smallest non-negative integer such that $z_{f, K}$ is nonzero in $\hone(K, A[p^{\calI_p + 1}])$. In particular, if $\calI_p = 0$, 
then $\shat(f/K)=0$ and $\honef(K, A) = z_{f, K} \cdot \K/\O$.  
\end{theorem}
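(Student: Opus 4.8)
The plan is to follow the classical Kolyvagin-system strategy exactly as in Nekovář's and Besser's work on Heegner cycles, but with the generalized Heegner cycles of \cite{bdp:generalized} in place of the classical ones. First I would recall how the basic generalized Heegner cycle $z_{f,K}$ sits inside a whole Euler system: for each squarefree product $n$ of split primes (in a suitable set), the cycle class $z_{f,K}[n] \in \hone(K[n], T)$ obtained by applying $\AJ^\et_{K[n]}$ to the appropriate generalized Heegner cycle on $X_{k-2}$ over the ring class field $K[n]$, and one checks the Euler-system norm-compatibility relations (the distribution relation at the primes $\ell \mid n$ relating $\cores_{K[n\ell]/K[n]} z_{f,K}[n\ell]$ to $a_\ell \cdot z_{f,K}[n]$ up to the local Frobenius factor). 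These are precisely the relations established in \cite{bdp:generalized} and used by Longo--Vigni; the non-exceptionality hypothesis on $p$ (Definition \ref{def:admissible-primes}) guarantees the big-image property for $V_\p$ and that the Galois action is large enough for the combinatorial Chebotarev arguments that follow.

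Next I would run Kolyvagin's descent. Fix $M$ large and work modulo $p^M$; using the big image one produces, for suitably chosen Kolyvagin primes $\ell$ (those with $\Frob_\ell$ landing in the right conjugacy class, so that $\hone(K_\ell, A[p^M])$ is free of rank one over $\O/p^M$ and the localization of $z_{f,K}$ is a generator up to a controlled power of $p$), derived classes $\kappa_n \in \hone(K, A[p^M])$ built from the $z_{f,K}[n]$ by Kolyvagin's derivative operators. The key local computations are: (i) at primes $\ell \nmid n$ the class $\kappa_n$ is unramified, hence lies in the relaxed-at-$n$ Selmer group, and (ii) at primes $\ell \mid n$ the singular (ramified) part of $\loc_\ell \kappa_n$ equals, up to a unit, the unramified part of $\loc_\ell \kappa_{n/\ell}$ — the ``arithmetic duality'' relation. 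Here one must take care that $\honef(K,T)$ is the image of the Abel--Jacobi map and that the local conditions defining $\honef$ are the Bloch--Kato ones at $p$ and the unramified ones away from $p$; this is where the self-duality of $T$ (from \cite{nekovar:kuga-sato}) and the local Tate pairing enter, exactly as in Besser.

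Then I would invoke global duality / the Poitou--Tate exact sequence together with the above local relations to bound the Selmer group: a standard induction on the number of prime factors of $n$ shows that if $\calI_p$ denotes the divisibility index of $z_{f,K}$ (the least $j$ with $z_{f,K} \ne 0$ in $\hone(K, A[p^{j+1}])$), then $p^{\calI_p}$ annihilates $\honef(K,A)/(z_{f,K})$ on the one hand, and a second application (using $\kappa_\ell$ for a single Kolyvagin prime $\ell$) shows $p^{\calI_p}$ annihilates the ``dual'' quotient, so altogether $p^{2\calI_p} \shat(f/K) = 0$ once one identifies $\shat(f/K)$ with the quotient of $\honef(K,A)_\divisible$'s complement — i.e.\ with the finite group measuring the failure of $z_{f,K} \cdot \K/\O$ to be all of $\honef(K,A)$, as set up in Definition \ref{def:image-abel-jacobi-K}. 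When $\calI_p = 0$ the cycle is a $\O$-generator of a direct summand and the bound collapses to $\shat(f/K)=0$ and $\honef(K,A) = z_{f,K}\cdot\K/\O$.

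The main obstacle, as usual in this circle of ideas, is the local analysis at the Kolyvagin primes $\ell$ and the verification that the generalized Heegner cycle classes genuinely form an Euler system with the correct norm-compatibility and local behaviour — in particular that the finite/singular parts of the localizations behave as in the elliptic-curve case despite $T$ now coming from a higher-weight motive and $X_{k-2}$ being the generalized Kuga--Sato variety rather than a modular curve. Most of this is already available in \cite{bdp:generalized} and \cite{longo-vigni:generalized}, so the real work is bookkeeping: propagating the index $\calI_p$ correctly through the two descents and checking that the definition of $\shat(f/K)$ via the Abel--Jacobi image is compatible with the Selmer-theoretic quotient that Kolyvagin's method naturally controls. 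I expect the ordinarity hypothesis to be used, as in Besser and Longo--Vigni, to guarantee that the Bloch--Kato local condition at $p$ is the ``ordinary'' one and that $z_{f,K}$ lands in $\honef(K,T)$; without it the local condition at $p$ is subtler and the finiteness argument does not go through in this form.
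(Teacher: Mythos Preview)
Your overall strategy is the same as the paper's: reduce to Besser's formal argument by verifying that the Kolyvagin derivative classes built from \emph{generalized} Heegner cycles satisfy the same short list of properties (local finiteness away from $n$, finite--singular comparison at $\ell\mid n$) that Besser isolates for the classical ones. The paper does exactly this, citing the Euler system of Castella--Hsieh rather than \cite{bdp:generalized} or \cite{longo-vigni:generalized} directly, and then checks three propositions corresponding to \cite[Proposition 3.2]{besser:finiteness-sha}.

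There is, however, a genuine error in your setup: the Kolyvagin primes $\ell$ are \emph{inert} in $K$, not split. This is not a cosmetic slip. The anticyclotomic Euler system is indexed by squarefree products of inert primes; for such $\ell$ one has $G(\ell)=\Gal(K[\ell]/K[1])$ cyclic of order $\ell+1$, and the $M$-admissibility condition $p^M\mid \ell+1$, $p^M\mid a_\ell$ is what makes $\charpol(\Frob_\ell\mid T)\equiv X^2-1\bmod p^M$, so that $\Frob_\lambda=\Frob_\ell^2$ acts trivially on $A[p^M]$ and the finite--singular isomorphism $\phi_\ell^{\fs}$ exists. For split primes none of this works (the group $G(\ell)$ has order $\ell-1$, there is no single $\lambda$, and the local structure is entirely different). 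Relatedly, $\honef(K_\lambda,A[p^M])\cong A[p^M]$ has $\O/p^M$-rank two, not one.

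You are also missing a property that is essential to Besser's bound and that the paper singles out explicitly: the action of complex conjugation. The Euler-system relation (E3), $\tau_c\cdot z_{f,n}=w_f(\sigma_{\bar{\Nfrak}}\cdot z_{f,n})$, propagates to $\tau_c\cdot P(n)=\epsilon_n P(n)$ with $\epsilon_n=(-1)^{\omega(n)}w_f$. Besser's descent is organised around the $\pm$-eigenspace decomposition of $\honef(K,A[p^M])$ under $\tau_c$, and the alternation of sign with the number of prime factors of $n$ is precisely what lets one bound both eigenspaces and obtain the exponent $2\calI_p$ rather than something worse. Without this ingredient your sketch of ``$p^{\calI_p}$ annihilates one quotient, a second application gives the dual'' does not go through as stated.
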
 

Then in Section \ref{sec:consequences} with an Iwasawa theoretical argument that follows that of \cite{nekovar:kolyvagin}, we show that the vanishing of $\shat(f/K)$ implies the vanishing of $\shat(f/K_\infty)$, proving in this way Theorem \ref{th:main-intro}.

\begin{remark}
Let us remark that the Heegner hypothesis on $K$ could probably be relaxed. Since $(N, d_K)$ we may write $N = N^+ N^-$, where $N^+$ and $N^-$ are coprime, 
and all the prime factors of $N^+$ (resp.~$N^-$) are split (resp.~inert) in $K$. Assume that $N^-$ is a square-free product of an even number of primes: 
under this condition, that we call \emph{generalized Heegner hypothesis}, one may construct 
(see \cite{brooks:shimura-curves-special-values, magrone:generalized-heegner-cycles}) 
generalized Heegner cycles over Shimura curves which enjoy the same formal properties of the generalized Heegner cycles considered in this paper. 
In a recent paper \cite{pati:generalized-quaternionic} Pati shows that replacing the generalized Heegner cycles with the ones 
over Shimura curves one can prove the result of \cite{longo-vigni:generalized} under the relaxed Heegner hypothesis. 
It may therefore be possible to prove in the same way results similar to those in this paper in this more general context. Partially this is carried out in a forthcoming paper with Antonio Lei and Luochen Zhao (see \cite[Section 3]{lei-mastella-zhao:bloch-kato-sha-anticyclotomic}, where we generalize a special case of Theorem \ref{th:besser-intro}). 
\end{remark}
\subsection{Content of this article}\label{sec:content}

Section \ref{sec:selmer-groups-and-complexes}-\ref{sec:mod-forms-gen-heegner-cycles} deal with the background material: 
in particular in Section \ref{sec:selmer-groups-and-complexes} we introduce the general notion of Selmer groups and Selmer complexes for $p$-adic representations;
in Section \ref{sec:anticyclotomic-iwasawa-theory} we introduce the anticyclotomic $\Z_p$-extension $K_\infty$ and we recall some notions of the theory 
of Semer group and complexes over $K_\infty$; in Section \ref{sec:mod-forms-gen-heegner-cycles} 
we introduce the self-dual representation attached to a modular form $f$ of weight $k \ge 2$ and the generalized Heegner cycles of 
Bertolini, Darmon and Prasanna.

Section \ref{sec:vanishing-of-sha} and Section \ref{sec:consequences} contain the new results: 
in Section \ref{sec:vanishing-of-sha} we prove Theorem \ref{th:besser-intro}, generalizing \cite{besser:finiteness-sha} using generalized Heegner cycles,  
in Section \ref{sec:consequences} we generalize the Iwasawa theoretic argument of \cite{nekovar:kolyvagin} 
and we combine it with Theorem \ref{th:besser-intro} in order to prove Theorem \ref{th:main-intro}.
\subsection{Acknowledgements}
The content of this article is part of my PhD Thesis at {Università degli Studi di Padova}. I would like to express my deepest gratitude to my advisor, Matteo Longo, for the nice topic he proposed to me and for his constant help and encouragement. Special thanks go to Stefano Vigni for several discussions and advice about the topics of this paper. I am grateful also to Francesco Zerman and Louchen Zhao for having pointed out some flaws in an earlier version of this article and to the anonymous referee for carefully reading the paper and suggesting many corrections and improvements. This article has been written during my stay at {Università degli Studi di Genova}, partially supported by PRIN 2022, The arithmetic of motives and L-functions. 
\subsection{Notations and Conventions}\label{sec:notations}

Throughout this paper we fix the following data:
\begin{itemize}
    \item a rational odd prime $p$;
    \item an integer $N > 5$ such that $(N, p)=1$;
    \item an imaginary quadratic field $K$ of discriminant $d_K \ne -3, -4$ (i.e.~such that $\O_K^\times = \set{\pm 1}$), such that $(d_K, pN) = 1$ and 
    satisfying the \emph{Heegner hypothesis}, i.e.~all prime factors of $N$ split in $K$;
    \item an algebraic closure $\bar{\Q}$ (resp.~$\bar{\Q}_\ell$ for any rational prime $\ell$) of $\Q$ (resp.~$\Q_\ell$);
    \item an embedding $i_\ell \colon \bar{\Q} \hookrightarrow \bar{\Q}_\ell$ for any rational prime $\ell$.
\end{itemize}
If $E$ is a number field let $G_E = \Gal(\bar{\Q}/E)$ be its absolute Galois group, 
moreover for any place $v \mid \ell$ of $E$ consider the completion $E_v$ of $E$ at $v$ and its absolute Galois group $G_v = \Gal(\bar{\Q}_\ell/E_v)$: 
note that the embedding $i_\ell$ realizes $G_v$ as a decomposition subgroup of $G_E$. 
The inertia subgroup of $G_E$ at $v$ will be denoted by $I_v$ and identified with $\Gal(\bar{\Q}_\ell/E_v^\ur) \subseteq G_v$. 
When $v$ is archimedean, i.e.~$E_v = \C$ or $\R$, define $I_v = G_v$, it is trivial or has order $2$. 
For $\ell \ne \infty$ we will use the notations $G_\ell = \Gal(\bar{\Q}_\ell/\Q_\ell)$ and $I_\ell$ for its inertia subgroup; 
$\Frob_\ell$ will denote an (arithmetic) Frobenius, i.e.~(a lift of) a topological generator of $G_\ell/I_\ell$. 
In particular we fix the Frobenius automorphism $\Frob_p \in G_p$ such that $\chi_p(\Frob_p) =1$, where $\chi_p$ is the $p$-adic cyclotomic character (see Example \ref{ex:cyclotomic}): this is possible since  $\Q_p(\mu_{p^n})/\Q_p$ is totally 
ramified for any $n > 0$ and hence we may choose $\Frob_p$ fixing $\mu_{p^\infty}$.

If $S$ is a finite set of places of $E$ containing all archimedean places and all primes above $p$ let $E_S$ be the maximal extension of $E$ 
unramified outside $S$ and denote $G_{E, S} = \Gal(E_S/E)$. Denote by $I_v^S$ and $G_v^S$ respectively the decomposition and the inertia group of $G_{E, S}$ at $v$.

For any local field $E_v$ with residue characteristic $\ell$, let $E_v^t$ denote the maximal tamely ramified extension of $E_v$, 
i.e.~the union of all the algebraic extensions of $E_v$ with prime-to-$\ell$ ramification index.  

If $E'/E$ is an extension of number fields and $M$ is a $G_E$-module let 
\[
\res_{E'/E} \colon \hone(E, M) \to \hone(E', M), \qquad \cores_{E'/E} \colon \hone(E', M) \to \hone(E, M)
\]
denote the restriction and corestriction maps. Recall that if $E'/E$ is finite and Galois, we have 
\[
\res_{E'/E} \circ \cores_{E'/E} = \Tr_{E'/E} := \hspace{-15pt}\sum_{\sigma \in \Gal(E'/E)} \hspace{-15pt}\sigma \, \,, \qquad 
\cores_{E'/E} \circ \res_{E'/E} = [E' : E],
\]
 where $\sigma \in \Gal(E'/E)$ acts on $\hone(E', M)$ via the standard Galois action on cohomology.

For an abelian category $\ccat$, we denote by $\D^\ast(\ccat)$ the  
(unbounded for $\ast = \emptyset$, bounded if $\ast = b$, bounded below if $\ast = +$, bounded above if $\ast = -$ ) derived category of $\ccat$.
Let $R$ be a commutative ring, $\rmod$ denotes the category of $R$-modules; $\rmod_\ft$ (resp.~$\rmod_\coft$) the full subcategory of modules of finite type,
i.e.~noetherian (resp.~of cofinite type, i.e.~artinian). 
We denote by $\D^\ast_{\ft}\rmod$ (resp.~$\D^\ast_{\coft}\rmod$) the full subcategory of $\D^\ast\rmod$ 
consisting of complexes with cohomology of finite (resp.~cofinite) type.
In Section \ref{sec:selmer-complexes} we will fix a noetherian complete local ring $R$ of dimension $d$, maximal ideal $\mfrak$ and finite residue field $k$ of characteristic $p$: for a profinite group $G$ we denote  by $\rgmod^\ad$ (resp.~$\rgmod^\indad$) the category of admissible (resp.~ind-admissible)
$R[G]$-modules as defined in \cite[Definition 3.2.1, 3.3.1]{nekovar:selmer-complexes}.
\newpage
\section{Selmer groups and complexes}\label{sec:selmer-groups-and-complexes}

In this section we introduce the notion of $p$-adic Galois representation and its Selmer groups.

\subsection{$p$-adic Galois representations}\label{sec:p-adic-reps}

In this section let $E$ be a number field and fix a finite extension $\K/\Q_p$, denote by $\O$ its ring of integers, by $\p$ its maximal ideal, 
by $\pi$ a fixed uniformizer and by $\kappa$ its residue field.
\begin{definition}\label{def:p-adic-rep}
An $\O$-adic (resp.~$\K$-adic) representation of $G_E$ is a free $\O$-module $T$ (resp.~a $\K$-vector space $V$) of finite rank (resp.~dimension) with a
$\O$-linear (resp.~$\K$-linear) action of $G_E$ continuous with respect to the $\p$-adic topology on $T$ (resp.~$V$).
\end{definition}

These two notions are linked (and both usually referred as $p$-adic Galois representations). Indeed, if $T$ is an $\O$-adic representation, 
then the $K$-vector space $V = T \otimes_\O \K$ endowed with the induced action 
$\sigma \cdot (x \otimes r) = (\sigma \cdot x) \otimes r$, for  $x \in T$, $r \in \K$,
is a $\K$-adic representation of dimension equal to the rank of $T$. Vice versa any $G_E$-stable lattice $T$ inside a $\K$-adic representation $V$ is naturally 
an $\O$-adic representation, endowed with the induced action, of rank equal to the dimension of $V$.

Moreover we may attach to $T$ also a discrete torsion $G_E$-module $A = V/T = T \otimes_\O \K/\O$ 
and for any $M \in \O \setminus \O^\times$, we may consider its $M$-torsion submodule $A[M] = M^{-1}T/T \cong T/MT$, 
that is $G_E$-stable, by the $\O$-linearity of the action on $T$. In particular $A[p^n] \cong T/p^nT$ and 
\[
T = \projlim_{n} T/p^n \, T \cong \projlim_{n} A[p^n], \qquad A = \injlim_{n} A[p^n] = \bigcup_{n} A[p^n].
\]

\begin{remark}\label{rk:reps-as-homs}
Sometimes we will see these representations equivalently as the attached (continuous) group homomorphisms
\[
\rho_M \colon G_E \to \GL(M) \cong \GL_n(B); \quad \rho(\sigma)(x) = \sigma \cdot x \quad \text{for} \; \sigma \in G_E, x \in M,
\]
where $n$ is the rank of $T$ and $B = \O, \K$ or $\kappa$ respectively, depending on $M = T, V$ or $A[\pi]$.
\end{remark}

\begin{example}\label{ex:cyclotomic}
Let $\chi_p \colon G_\Q \to \Z_p^\times$ be the $p$-adic cyclotomic character, i.e.~the map such that $\sigma(\zeta) = \zeta^{\chi_p(\sigma)}$ for any 
$\sigma \in G_\Q$ and $\zeta \in \mu_{p^\infty}(\bar{\Q})$. The action of $G_\Q$ on a free $\Z_p$-module of rank one by $\chi_p$ gives rise to a $\Z_p$-adic 
representation. We will use the cyclotomic character moreover in order to twist a $p$-adic representation: let $T$ be a $\O$-adic representation of  
$G_E$, for a number field $E$, and denote the action of $\sigma \in G_E$ on $x \in T$ by $\sigma \cdot x$, 
then for any $j \in \Z$ we define $T(j)$ to be the $\O$-module $T$ as a $p$-adic representation of  $G_E$ with the action 
$\sigma \circ x = \chi_p(\sigma)^j(\sigma \cdot x)$. We will call it the $j$-th Tate twist of $T$.
\end{example}

An important feature of many $p$-adic Galois representations, in particular of all these of interest for this article, 
is that they are unramfied almost everywhere.
\begin{definition}\label{def:unramified-reps}
Let $M = T, V, A$, we say that $M$ is unramified at a place $v$ of $E$ if $I_v$ acts trivially, or equivalently if $\rho_M(I_v) = \set{1}$.
\end{definition} 

Note that $T$ is unramified if and only if $V$ or $A$ are so. 
\begin{example}\label{ex:cyclotomic-ramification}
Note that the $p$-adic character $\chi_p$ is unramified at any prime $\ell \ne p$. Indeed if $\zeta \in \mu_{p^\infty}(\bar{\Q})$, then $\Q_\ell(\zeta)$ 
is unramified and hence any $\sigma \in I_\ell = \Gal(\bar{\Q}_\ell/\Q_\ell^\ur)$ fixes $\zeta$. 
Therefore $\chi_p(I_\ell) = 1$. This is not the case for $\ell = p$, since 
$\Q_p(\zeta)$ in this case is totally ramified and hence there is a $\sigma \in I_p = \Gal(\bar{\Q}_p/\Q_p^\ur)$ that does not fix $\zeta$.
\end{example}

We will be interested in representations unramified outside a finite set $S$ of places of $E$ containing the archimedean places and all the places above $p$. 
This amounts to consider $p$-adic representations of $G_{E, S}$, by the following proposition:

\begin{proposition}\label{prop:factorization-maximal-unramified-extension}
A representation $\rho$ of $G_E$ is unramified outside $S$ if and only if it factors through $G_{E, S}$.
\end{proposition}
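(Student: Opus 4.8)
The plan is to unwind the definition of $G_{E,S}$ as $\Gal(E_S/E)$, where $E_S$ is the maximal subextension of $\bar\Q/E$ unramified outside $S$, and match it against the kernel of $\rho$. Since $\rho \colon G_E \to \GL(M)$ is continuous and $\GL(M)$ has no small subgroups in the relevant sense (or more directly, since $\rho$ factors through a finite or profinite quotient cut out by its kernel), the statement reduces to: $\rho$ is unramified outside $S$ if and only if $\ker\rho \supseteq \Gal(\bar\Q/E_S)$. First I would recall that a continuous representation factors through $G_E/\ker\rho = \Gal(L/E)$ where $L = \bar\Q^{\ker\rho}$ is the fixed field; so $\rho$ factors through $G_{E,S}$ precisely when $L \subseteq E_S$, i.e.\ when $L$ is unramified at every place $w$ of $L$ lying over a place $v \notin S$ of $E$.

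The heart of the argument is the translation between ``$L/E$ unramified at $v$'' and ``$I_v$ acts trivially via $\rho$,'' for $v \notin S$. Fix such a $v$, with residue characteristic $\ell$ (necessarily $\ell \ne p$ and $v$ non-archimedean, since $S$ contains all archimedean places and all places above $p$). Using the embedding $i_\ell$, the inertia group $I_v = \Gal(\bar\Q_\ell/E_v^\ur)$ sits inside $G_E$ as described in the notations. By definition $M$ is unramified at $v$ iff $\rho(I_v) = \{1\}$, i.e.\ iff $I_v \subseteq \ker\rho$, i.e.\ iff $I_v$ acts trivially on $L$, i.e.\ iff $L_w/E_v^\ur$ is trivial for the place $w \mid v$ of $L$ induced by $i_\ell$ — which is exactly the statement that $L/E$ is unramified at $v$. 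Running this equivalence over all $v \notin S$ gives: $\rho$ unramified outside $S$ $\iff$ $L/E$ unramified outside $S$ $\iff$ $L \subseteq E_S$ $\iff$ $\rho$ factors through $\Gal(E_S/E) = G_{E,S}$.

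The only mild subtlety — and the step I would be most careful about — is the dependence on the chosen embedding $i_\ell$, i.e.\ on which place $w \mid v$ of $L$ (equivalently which decomposition subgroup inside $G_E$) one looks at. This is harmless because $L/E$ is Galois: the decomposition and inertia subgroups at the various places above $v$ are all conjugate in $\Gal(L/E)$, so $\rho(I_v) = \{1\}$ for one choice of $i_\ell$ forces it for all, and ``$L/E$ unramified at $v$'' is intrinsically a statement about $v$ rather than about a chosen $w$. With that observed, the proof is a direct chain of equivalences and requires no further input; the continuity of $\rho$ is used only to guarantee that $L = \bar\Q^{\ker\rho}$ is the field through which $\rho$ genuinely factors.
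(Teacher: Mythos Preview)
Your argument is correct and is essentially the paper's proof, packaged via the fixed field $L = \bar{\Q}^{\ker\rho}$ rather than via the kernel of $G_E \twoheadrightarrow G_{E,S}$. The paper invokes Neukirch to identify $\Gal(\bar{\Q}/E_S)$ as the minimal closed normal subgroup of $G_E$ containing the $I_v$ for $v \notin S$, while you reach the same conclusion directly from the defining maximality of $E_S$; the content is identical.
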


\begin{proof}
If $\rho$ factors as $\rho^S$ through $G_{E, S}$, then $\rho(I_v) = \rho^S \circ \pi(I_v) = \rho^S(I_v^S) = \rho^S(\set{1}) = \set{1}$ 
for any $v \notin S$, denoting by $\pi \colon G_E \surj G_{E, S}$ the quotient map. For the converse observe that applying \cite[Chapter II, Proposition 9.4]{neu:ant} to the extensions $E_S/E$ and $\bar{E}/E$ it follows that $G_{E, S}$ is the quotient of $G_E$ by $H$, the minimal closed normal subgroup containing all $I_v$ such that $v \notin S$. Thus $\ker \rho = H$ by minimality and therefore $\rho$ factors through $G_E/H \cong G_{E, S}$.
\end{proof}

Let us turn to the notion of duality between Galois representations. 
\begin{definition}\label{def:linear-dual}
The linear dual of an $\O$-adic (resp.~$\K$-adic) representation $T$ (resp.~$V$) of $G_E$ is defined to be the free $\O$-module $T^\ast = \Hom_{\O}(T, \O)$ 
(resp.~the $\K$-vector space $V^\ast = \Hom_\K(V, \K)$) endowed with the $\p$-adic topology and the continuous $G_E$-action  
$\sigma \cdot f(x) = f(\sigma^{-1} x)$  for any $\sigma \in G_E$, $f \in T^\ast$, $x \in T$ (resp.~$f \in V^\ast$, $x \in V$). 
\end{definition}
 
Note that the functor $(\, -\,)^\ast$ is a dualizing functor on the category of finite free $\O$-module (resp.~of $\K$-vector spaces), i.e.~the canonical map 
$T \to T^{\ast\ast}$ (resp.~$V \to V^{\ast \ast}$) is an isomorphism.  

For a compact or discrete $\O$-module $M$ we may define moreover its Pontryagin dual.
\begin{definition}\label{def:pontryagin}
Let $M$ be a compact or a discrete $\O$-module, the Pontryagin dual of $M$ is defined to be the $\O$-module $M^\vee = \Hom_\O^\cont(M, \K/\O)$ endowed with 
the compact-open topology. If moreover $M$ has a continuous $G_E$-action, it induces a continuous $G_E$-action on $M^\vee$ defined by 
$\sigma \cdot f(x) = f(\sigma^{-1} x)$  for any $\sigma \in G_E$, $f \in M^\vee$ and $x \in M$.
\end{definition}

\begin{remark}\label{rk:pontryagin-Z-p}
In the literature of $p$-adic Galois representation the most common definition of Pontryagin dual is $\Hom_{\Z_p}^\cont(M, \Q_p/\Z_p)$, but this is harmless,
since it is (non-canonically) isomorphic to ours, the isomorphism depending on the choice of a basis of $\O$ as finite free $\Z_p$-module.
\end{remark}

It is known (e.g.~\cite[Chapter I, Theorem 1.1.8]{neu:cnf}) that the $(\, -\,)^\vee$ functor defines a duality functor between the categories of compact and discrete $\O$-modules, i.e.~$M \to M^{\vee \vee}$ is an isomorphism for such modules. 

Starting from an $\O$-adic representation $T$, let $V = T \otimes \K$, $A = V/T$ as above and define moreover $A^\ast = V^\ast/T^\ast$. We may depict the relations among the linear and the Pontryagin dual representations of $T$ by the following diagram:
\[
\begin{tikzcd}[column sep=huge, row sep = huge]
T \ar[r, leftrightarrow, "(-)^\ast" {xshift=2pt}] \ar[d, "\otimes_\O \K/\O" '] \ar[rd, leftrightarrow, end anchor = {[xshift = 3pt, yshift=-3pt]}, "(-)^\vee" {xshift=-10pt, yshift=5pt}] & T^\ast \ar[d, xshift = -2pt, "\otimes_\O \K/\O"]\\
A \ar[ru, leftrightarrow] & A^\ast
\end{tikzcd}
\]

This diagram follows from the following proposition applied to $M = T, T^\ast$ since all morphisms $\phi \colon M \to \K/\O$ are continuous, 
as long as $M$ is a finite free $\O$-module. In fact, since $\phi$ is $\O$-linear, it is enough to check that $\phi$ is continuous at $0 \in M$; 
that is, as $M$ has the $\p$-adic topology and $\K/\O$ is discrete, we have to find an $N \ge 0$ such that $\phi(\p^N M) = 0 + \O$. 
Let therefore $m_1, \dots, m_s$ be a basis of $M$ over $\O$ and $\phi(m_i) = a_i/\pi^{n_i} + \O$ for $a_i \in \O^\times$, $n_i \ge 0$, let $N = \max_i \set{n_i}$,  
then $\phi(\p^N M) = 0 + \O$. 

\begin{proposition}\label{prop:iso-dualities}
If $M$ is a finite free $\O$-module, then $\Hom_\O(M, \O) \otimes_\O \K/\O \cong \Hom_\O(M, \K/\O).$ 
\end{proposition}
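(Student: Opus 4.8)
The plan is to write down the evident natural map and check it is an isomorphism after reducing to the rank-one case. Concretely, I would consider
\[
\alpha_M \colon \Hom_\O(M, \O) \otimes_\O \K/\O \longrightarrow \Hom_\O(M, \K/\O), \qquad \alpha_M(\phi \otimes \bar{a})(m) = \overline{\phi(m)\, a},
\]
where $a \in \K$, $\bar a$ is its class in $\K/\O$, and the formula makes sense precisely because $\phi(m) \in \O$, so $\phi(m)\,a$ depends on $a$ only modulo $\O$; one checks this is a well-defined $\O$-linear map (indeed it is just the canonical comparison map $\Hom_\O(M,\O)\otimes_\O N \to \Hom_\O(M,N)$ specialized to $N = \K/\O$).

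First I would handle $M = \O$. Evaluation at $1$ identifies $\Hom_\O(\O,\O)$ with $\O$ and $\Hom_\O(\O,\K/\O)$ with $\K/\O$, and under these identifications $\alpha_\O$ becomes the canonical isomorphism $\O \otimes_\O \K/\O \iso \K/\O$; so $\alpha_\O$ is an isomorphism. Next I would invoke additivity: $\Hom_\O(-,\O)$ and $\Hom_\O(-,\K/\O)$ both carry finite direct sums to finite direct sums, tensoring with $\K/\O$ is additive, and $\alpha_M$ is compatible with these decompositions, so $\alpha_{M_1\oplus M_2}$ is identified with $\alpha_{M_1}\oplus\alpha_{M_2}$. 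Since $M$ is finite free we may write $M \cong \O^{\oplus s}$ with $s = \rk_\O M$, whence $\alpha_M$ is a finite direct sum of copies of $\alpha_\O$ and therefore an isomorphism.

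A slicker variant, which I would mention as an alternative, is to apply $\Hom_\O(M,-)$ to the short exact sequence $0 \to \O \to \K \to \K/\O \to 0$: since $M$ is finite free, hence projective, this stays exact, and for finitely generated projective $M$ the canonical map $\Hom_\O(M,\O)\otimes_\O N \to \Hom_\O(M,N)$ is an isomorphism for every $\O$-module $N$; taking $N = \K/\O$ gives the claim. In either route there is essentially no serious obstacle; the only subtlety worth flagging is that $\K/\O$ is not finitely generated over $\O$, so one cannot simply argue that "both sides are finite free and of the same rank" — the reduction to $M=\O$ (or the use of projectivity together with the natural comparison map) is what makes the argument go through cleanly.
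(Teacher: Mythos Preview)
Your proof is correct and is essentially the same as the paper's: both use the natural multiplication map $\phi \otimes x \mapsto x\cdot\phi$ and verify it is an isomorphism by choosing a basis of $M$. The paper writes down the explicit inverse $\psi \mapsto \sum_i m_i^\ast \otimes \psi(m_i)$ using the dual basis, which is just the unwound form of your additivity/reduction-to-rank-one argument.
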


\begin{proof}
The isomorphism is given by the multiplication map $\phi \otimes x  \mapsto x \cdot \phi$. 
Indeed, let $m_1, \dots, m_s$ be an $\O$-basis of $M$ and write $m_i^\ast$ for the $\O$-linear maps  in $\Hom_\O(M, \O)$ such that $m_i^\ast(m_j) = \delta_{ij}$, 
where $\delta_{ij}$ is the Kronecker symbol. It is straight forward to check that the inverse of the previous map is given by the linear homomorphism 
$\psi \mapsto \sum_i m_i^\ast \otimes \psi(m_i)$, as any $\psi \in \Hom_\O(M, \K/\O)$ is uniquely defined by its image on a basis.
\end{proof}
\subsection{Selmer groups}\label{sec:selmer-groups}

In this paragraph $T$ will be an $\O$-adic representation of $G_{E, S}$, where $S$ is a finite set $S$ of places of $E$ containing the archimedean places and all the places above $p$, and let $V$ and $A$ be the induced representations as in the previous paragraph.
We introduce the general formalism of Selmer groups, following \cite{rubin:kolyvagin}. In the following all cohomology groups are the 
\emph{continuous cohomology}
groups, introduced in \cite{tate:k-2-gal-coho}. For the definition and the main properties of these groups  
we refer to \cite[B.2]{rubin:euler-systems} or \cite[Section II.3]{neu:cnf}.

\begin{definition}\label{sec:local-conditions}
A local condition $\calF= (\calF_v)_{v \in S}$ is the choice of a subspace $\hone_{\calF_v}(E_v, V) \subseteq \hone(E_v, V)$ for each $v \in S$. 
Note that the choice induces for any place $v \in S$ the submodules $\hone_{\calF_v}(E_v, A)$ of $\hone(E_v, A)$ 
and $\hone_{\calF_v}(E_v, T)$ of $\hone(E_v, T)$ 
taking respectively the image and the inverse image of $\hone_{\calF_v}(E_v, V)$ under the natural maps. 
The submodule $\hone_{\calF_v}(E_v, A[\pi^n])$ of $\hone(E_v, A[\pi^n])$ can be equally defined (see \cite[Remark I.3.9]{rubin:euler-systems}) as the image of 
$\hone_{\calF_v}(E_v, T)$ under the map induced by 
\[
\begin{tikzcd}
    T  \surj T/\pi^nT \ar[r, "\, \, \cdot \, \pi^{-n}", "\sim"'] & \pi^{-n}T/T = A[\pi^n]
\end{tikzcd}
\]
or as the inverse image of $\hone_{\calF_v}(E_v, A)$ by the map induced by the inclusion $A[\pi^n] \inj A$.
\end{definition}

\begin{example}\label{ex:unramified-finite}
Let $X = T, V, A, A[\pi^n]$ and $v$ a finite place of $E$. We define the subgroup of unramified cohomology classes as
\[
\hone_\ur(E_v, X) = \ker(\hone(E_v, X) \to \hone(I_v, X)),
\] 
where $I_v$ denotes the inertia subgroup of $G_v$. If $v \nmid p$ we define the \emph{finite} local condition at $v$ to be $\honef(E_v, V) = \hone_\ur(E_v, V)$ and, 
as in the previous definition, $\honef(E_v, A)$ and $\honef(E_v, T)$ to be respectively the image and the inverse image under the natural morphisms. 
In particular 
$\hone_\ur(E_v, T) \subseteq \honef(E_v, T)$ and $\honef(E_v, A) \subseteq \hone_\ur(E_v, A)$. The index
\[
 c_v(A) := [\hone_\ur(E_v, A) : \honef(E_v, A)]
\]
is finite and it is called the \emph{$p$-part of the Tamagawa number of $A$ at $v$}.  
If $V$ is unramified at $v$, then $c_v(A) = 1$ (see \cite[Lemma 3.5.\emph{iv}]{rubin:euler-systems}), i.e.~$\honef(E_v, A) = \honeur(E_v, A)$.
Moreover by \emph{loc.~cit.}~if $V$ is unramified at $v$, then also $\honef(E_v, T) = \honeur(E_v, T)$ and $\honef(E_v, A[\pi^n]) = \honeur(E_v, A[\pi^n])$.
\end{example}

\begin{definition}\label{def:selmer-groups}
Let $X = T, V, A, A[\pi^n]$. The Selmer group of $X$ associated to a local condition $\calF$ is defined to be the group 
\[
\hone_\calF(E, X) = \ker\bigg(\hone(E_S/E, X) \to  \bigoplus_{v \in S} \frac{\hone(E_v, X)}{\hone_{\calF_v}(E_v, X)} \; \bigg).
\]
\end{definition}

In this article we will consider mainly the Bloch--Kato Selmer groups defined in \cite{bloch-kato:l-funct-tamagawa}, usually denoted by $\honef(E, X)$, 
i.e.~the Selmer groups associated with the following local conditions: 
\begin{align*}
\honef(E_v, V) &= 
\begin{cases}
\honeur(E_v, V) \quad &\text{if $v \in S$, $v \nmid p\infty$};\\
\ker \Big(\hone(E_v, V) \to \hone(E_v, V \otimes_{\Q_p} B_\crys)\Big) \quad & \text{if $v \mid p$};\\
\hone(E_v, V) &\text{if $v \mid \infty$}
\end{cases}\\
\end{align*}
where $B_\crys$ denote the Fontaine's ring of $p$-adic crystalline periods. The group 
\[
\hone_s(E_v, X) = \hone(E_v, X)/\honef(E_v, X)
\] 
is called the singular quotient, for $v \nmid p\infty$.

If moreover the representation $V$ is ordinary at any $v \mid p$, in the sense of \cite{greenberg:iwasawa-motives}, we have for any $v \mid p$ a filtration of 
$\K[G_v]$-submodules $V = F_v^0 V \supseteq V_v^+ = F_v^1 V \supseteq F_v^2 V \supseteq \cdots$ and we may define for $X = T, V, A$ the Greenberg Selmer group 
and the strict Greenberg Selmer group. For any $v \mid p$ let $T_v^+ = V_v^+ \cap T$, $A_v^+ = V_v^+/T_v^+$ and $X_v^- = X/X_v^+$; define
\begin{align*}
\hone_\Gr(E, X) = \ker\bigg( \hone(E_S/E, X) \to \bigoplus_{v \mid p} \hone(I_v, X_v^-) \oplus \bigoplus_{\substack{v \in S \\ v \nmid p \infty}} \hone(I_v, X)\bigg),\\
\hone_\str(E, X) = \ker\bigg( \hone(E_S/E, X) \to \bigoplus_{v \mid p} \hone(G_v, X_v^-) \oplus \bigoplus_{\substack{v \in S \\v \nmid p \infty}} \hone(I_v, X)\bigg).
\end{align*}
They are related by the exact sequence
\[
0 \to \hone_\str(E, X) \to \hone_\Gr(E, X) \to \bigoplus_{v \mid p} \hone\big(G_v/I_v, \ho(I_v, X_v^-)\big). 
\]

Note that the Greenberg Selmer groups of $V$ fit into the general framework described above using the local conditions
\begin{align*}
\hone_{\calF_\Gr}(E_v, V) &= 
\begin{cases}
\honeur(E_v, V) \quad &\text{if $v \in S$, $v \nmid p\infty$};\\
\hone_\ord(E_v, V) = \ker \Big(\hone(E_v, V) \to \hone(I_v, V_v^-)\Big) \quad & \text{if $v \mid p$};\\
\hone(E_v, V) &\text{if $v \mid \infty$}
\end{cases}\\
\end{align*}
and 
\begin{align*}
\hone_{\calF_\str}(E_v, V) &= 
\begin{cases}
\honeur(E_v, V) \quad &\text{if $v \in S$, $v \nmid p\infty$};\\
\hone_\str(E_v, V) = \ker \Big(\hone(E_v, V) \to \hone(G_v, V_v^-)\Big) \quad & \text{if $v \mid p$};\\
\hone(E_v, V) &\text{if $v \mid \infty$}.
\end{cases}\\
\end{align*}
It is immediate to observe that $\hone_\Gr(E, V) = \hone_{\calF_\Gr}(E, V)$ and $\hone_\str(E, V) = \hone_{\calF_\str}(E, V)$, but in general 
$\hone_\Gr(E, X) \ne \hone_{\calF_\Gr}(E, X)$, $\hone_\str(E, X) \ne \hone_{\calF_\str}(E, X)$ for $X = T, A$. For instance we have already remarked in 
Example \ref{ex:unramified-finite} that $\honeur(E_v, A)$ contains $\honef(E_v, A)$, that is the image of $\honeur(E_v, V)$ via the natural map, 
but these are not equal in general.

However, defining for any $v \mid p$ and $X = T, V, A$,
\begin{align*}
\hone_\ord(E_v, X) &= \ker \Big(\hone(E_v, X) \to \hone(I_v, X_v^-)\Big);\\
\hone_\str(E_v, X) &= \ker \Big(\hone(E_v, X) \to \hone(G_v, X_v^-)\Big),
\end{align*}
we may write the Greenberg and strict Selmer groups in a similar fashion:
\begin{align*}
\hone_\Gr(E, X) = \ker\bigg( \hone(E_S/E, X) \to \bigoplus_{v \mid p} \frac{\hone(E_v, X)}{\hone_\ord(E_v, X_v)} \oplus \bigoplus_{\substack{v \in S \\ v \nmid p \infty}} \frac{\hone(E_v, X)}{\honeur(E_v, X)}\bigg);\\
\hone_\str(E, X) = \ker\bigg( \hone(E_S/E, X) \to \bigoplus_{v \mid p} \frac{\hone(E_v, X)}{\hone_\str(E_v, X_v)} \oplus \bigoplus_{\substack{v \in S \\ v \nmid p \infty}} \frac{\hone(E_v, X)}{\honeur(E_v, X)}\bigg).
\end{align*}

\subsection{Selmer Complexes}\label{sec:selmer-complexes}

In \cite{nekovar:selmer-complexes}, Nekovar introduced a very general theory of Selmer complexes, generalizing the above theory of Selmer groups. 
In this section we briefly recall the definition of Selmer complexes and some of the general results of \cite{nekovar:selmer-complexes} and we show how to link this very general theory to our more down to earth setting. Let $R$ be a complete Noetherian local domain of finite dimension $d$, with maximal ideal $\mfrak$
and residue field $k$. Let $S_f \subseteq S$ be the subset consisting of finite places.

The theory of Selmer complexes can be seen as a generalization of the theory of Selmer groups from modules to complexes. 
In the following $\contcohocomplex(G, M)$ will denote, for a profinite group $G$ and a continuous $R[G]$-module $M$, 
the complex of continuous cochains of $G$ with values in $M$. 
\begin{definition}\label{definition:cochain-complex-for-complexes}
Let $G$ be a profinite group and $M^\bullet$ a complex of admissible $R[G]$-modules \cite[Definition 3.2.1]{nekovar:selmer-complexes}.
The continuous cochains complex $\contcohocomplex(G, M^\bullet)$ is the total complex of the double complex $\big(\contcochains^j(G, M^i)\big)_{i, j}$ with the differentials of the 
rows induced by $\diff_{M^\bullet}^\bullet$ and the standard cohomological ones $\delta^\bullet_{M^i}$ on the columns, i.e.~its degree $n$ component is 
\[
\contcochains^n(G, M^\bullet) = \bigoplus_{i + j = n} \contcochains^j(G, M^i)
\]
and the differential $\delta_{M^\bullet}^n$ restricts to $\contcochains^j(G_v,  \diff^i_{M^\bullet}) + (-1)^j\delta^j_{M^i}$ on $\contcochains^j(G_v, M^i)$.
\end{definition}

\begin{definition}\label{def:selmer-complex}
Let $X$ a complex of $R[G_{E, S}]$-admissible modules \cite[Definition 3.2.1]{nekovar:selmer-complexes}. 
A local condition for $X$ is a collection $\Delta(X) = (\Delta_v(X))_{v \in S_f}$, where $\Delta_v(X)$ consists of a complex 
$U_v^+(X)$ of $R$-modules together with a morphism of complexes
\[
i_v^+(X) \colon U_v^+(X) \to \contcohocomplex(G_v, X).
\]
Denote $i_S^+(X) = (i_v^+(X))_{v \in S_f}$. The Selmer complex with local condition $\Delta(X)$ on $X$ is the complex
\[
\selmer(G_{E, S}, X; \Delta(X)) = \Cone\bigg( \contcohocomplex(G_{E, S}, X) \oplus \bigoplus_{v \in S_f} U_v^+(X) 
\xrightarrow{\res - i_S^+(X)} \bigoplus_{v \in S_f} \contcoho(G_{E_v}, X)\bigg)[-1],
\]
seen as an object of the derived category $\D\rmod$. Its $i$-th cohomology group is called the generalized Selmer group of $X$ with local conditions $\Delta(X)$ and denoted by $\tildecohomology^i(G_{E, S}, X; \Delta(X))$. It has an $R$-module structure.
\end{definition}

We will, however, consider only the special case where $R = \O$ is the ring of integers of a $p$-adic field $\K$, 
the complex $X$ is concentrated in degree $0$ and its $0$\,-th term is an ordinary $p$-adic Galois representation $X = T, V, A$ 
as above (in fact, these are admissible $\O[G_{E, S}]$-modules by \cite[Lemma 3.2.4]{nekovar:selmer-complexes}) and the local conditions are the following Greenberg-like ones: for $v \in S_f$ 
\[
\Delta_v(X) = 
\begin{cases}
\Delta_v^\ur(X) \colon U_v^+(X) = \contcohocomplex(G_v/I_v, X^{I_v})  \xrightarrow{\infl} \contcohocomplex(G_v, X)   & \text{if $v \nmid p$;}\\
\Delta_v^\ord(X) \colon U_v^+(X) = \contcohocomplex(G_v, X_v^+)  \rightarrow  \contcohocomplex(G_v, X)                    & \text{if $v \mid p$.}
\end{cases}
\]
Moreover we denote by $\tildecohomology^i(E, X)$ generalized Selmer groups coming from this Selmer complexes. The following proposition links them with the Selmer groups previously introduced:
\begin{proposition}[{\cite[Lemma 9.6.3]{nekovar:selmer-complexes}}]\label{prop:comparison-generalized-greenberg}
For each $X = T, V, A$, there is an exact sequence
\[
0 \to \tildecohomology^0_f(E, X) \to \cohomology^0(E, X) \to \bigoplus_{v \mid p} \cohomology^0(G_v, X) \to \honetildef(E, X) \to \hone_\str(E, X) \to 0.
\]
\end{proposition}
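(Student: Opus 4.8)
The plan is to obtain the five-term sequence from the long exact cohomology sequence of the triangle defining the Selmer complex. Rewriting the cone of Definition~\ref{def:selmer-complex} --- equivalently, viewing the Selmer complex as the mapping fibre of $\res$ into the local ``error complexes'', cf.~\cite[\S6]{nekovar:selmer-complexes} --- gives a quasi-isomorphism
\[
\selmer(G_{E, S}, X) \simeq \Cone\bigg(\contcoho(G_{E, S}, X) \xrightarrow{\res} \bigoplus_{v \in S_f} Q_v(X)\bigg)[-1], \qquad Q_v(X) := \Cone\big(U_v^+(X) \xrightarrow{i_v^+} \contcoho(G_v, X)\big),
\]
and hence, for every $i$, an exact sequence
\[
\cdots \to \tildecohomology^i_f(E, X) \to \cohomology^i(G_{E, S}, X) \to \bigoplus_{v \in S_f} \cohomology^i(Q_v(X)) \to \tildecohomology^{i+1}_f(E, X) \to \cdots.
\]
Since all complexes in sight sit in non-negative degrees, the terms of negative index vanish, so in particular $\tildecohomology^0_f(E, X) = \ker\big(\cohomology^0(E, X) \to \bigoplus_{v \in S_f} \cohomology^0(Q_v(X))\big)$, using $\cohomology^0(G_{E, S}, X) = \cohomology^0(E, X)$.

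Next I would compute $\cohomology^0(Q_v(X))$ and $\cohomology^1(Q_v(X))$. For $v \nmid p$, where $U_v^+(X) = \contcoho(G_v/I_v, X^{I_v})$ and $i_v^+$ is inflation, the inflation--restriction sequence (together with $\cohomology^{\geq 2}(G_v/I_v, -) = 0$) shows that $\cohomology^0(U_v^+(X)) = X^{G_v} \xrightarrow{\sim} \cohomology^0(G_v, X)$, that $\cohomology^1(U_v^+(X)) = \honeur(E_v, X) \hookrightarrow \cohomology^1(G_v, X)$, and that $\cohomology^{\geq 2}(U_v^+(X)) = 0$; feeding this into the long exact sequence of the cone gives $\cohomology^0(Q_v(X)) = 0$ and $\cohomology^1(Q_v(X)) = \cohomology^1(G_v, X)/\honeur(E_v, X) = \hones(E_v, X)$. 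For $v \mid p$, where $U_v^+(X) = \contcoho(G_v, X_v^+)$ and $i_v^+$ is induced by $X_v^+ \hookrightarrow X$, the short exact sequence $0 \to X_v^+ \to X \to X_v^- \to 0$ identifies $Q_v(X) \simeq \contcoho(G_v, X_v^-)$; under the running assumptions the natural map $\cohomology^0(G_v, X) \to \cohomology^0(Q_v(X))$ is an isomorphism, so $\cohomology^0(Q_v(X)) = \cohomology^0(G_v, X)$, while $\cohomology^1(Q_v(X)) = \cohomology^1(G_v, X_v^-)$.

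Substituting these into the long exact sequence in degrees $0$ and $1$ produces
\[
0 \to \tildecohomology^0_f(E, X) \to \cohomology^0(E, X) \to \bigoplus_{v \mid p} \cohomology^0(G_v, X) \to \honetildef(E, X) \xrightarrow{\beta} \cohomology^1(G_{E, S}, X),
\]
in which the map out of $\cohomology^1(G_{E, S}, X)$ is restriction followed by the canonical projections onto the singular quotients $\hones(E_v, X)$ (for $v \nmid p$) and onto $\cohomology^1(G_v, X_v^-)$ (for $v \mid p$). By exactness $\im \beta$ equals the kernel of that map, which by the definitions of the singular quotient and of $\hone_\str(E_v, X) = \ker\big(\hone(E_v, X) \to \cohomology^1(G_v, X_v^-)\big)$ is exactly the strict Greenberg Selmer group $\hone_\str(E, X)$. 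Splicing the two halves at $\honetildef(E, X)$ yields the asserted five-term exact sequence; exactness at $\cohomology^0(E, X)$ and at $\bigoplus_{v \mid p} \cohomology^0(G_v, X)$ is then immediate, the maps there being restriction and the connecting homomorphism of the triangle.

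The crux is the local analysis at the places above $p$ in the second step: one must check that $\Cone(i_v^+) \simeq \contcoho(G_v, X_v^-)$ (formal from the short exact sequence, but it has to be carried compatibly through the rewriting of the cone in the first step) and, above all, that in degree $0$ this error complex is canonically $\cohomology^0(G_v, X)$ with the incoming map from $\cohomology^0(E, X)$ given by restriction --- this is precisely where the ordinarity hypotheses on the filtration $X_v^\pm$ are needed. The rest is diagram chasing in the long exact sequence.
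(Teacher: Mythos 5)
Your overall strategy --- rewriting the defining cone so that $\selmer(G_{E,S},X)$ sits in an exact triangle with $\contcoho(G_{E,S},X)$ and the local error complexes $Q_v(X)=\Cone(i_v^+)$, computing $\cohomology^0(Q_v)$ and $\cohomology^1(Q_v)$, and splicing the resulting long exact sequence --- is the standard argument; the paper gives no proof of this proposition (it cites Nekov\'a\v{r}, Lemma 9.6.3, whose proof runs along exactly these lines). Your treatment of the places $v\nmid p$ (where $\cohomology^0(Q_v)=0$ and $\ker\bigl(\hone(G_v,X)\to\cohomology^1(Q_v)\bigr)=\honeur(E_v,X)$) and your identification of the image of $\honetildef(E,X)\to\hone(G_{E,S},X)$ with $\hone_\str(E,X)$ are correct.

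The gap is at the places $v\mid p$ in degree $0$. You correctly obtain $Q_v(X)\simeq\contcoho(G_v,X_v^-)$, hence $\cohomology^0(Q_v(X))=\ho(G_v,X_v^-)$, but you then assert that ``under the running assumptions'' the natural map $\ho(G_v,X)\to\ho(G_v,X_v^-)$ is an isomorphism. No hypothesis in force in this section gives that, and it is false in general: the kernel is $\ho(G_v,X_v^+)$ and the cokernel injects into $\hone(G_v,X_v^+)$, and for $X=A$, say, one can have $\ho(G_v,A_v^-)\neq 0$ while $\ho(G_v,A)=0$ (surjectivity fails just as easily). Indeed the five-term sequence with $\bigoplus_{v\mid p}\ho(G_v,X)$ cannot hold in general: restriction $\ho(E,X)\to\ho(G_v,X)$ is injective, so that version would force $\tildecohomology^0_f(E,X)=0$, whereas your own computation shows $\tildecohomology^0_f(E,X)=\ker\bigl(\ho(E,X)\to\bigoplus_{v\mid p}\ho(G_v,X_v^-)\bigr)$. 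The correct third term is $\bigoplus_{v\mid p}\ho(G_v,X_v^-)$: this is what Nekov\'a\v{r}'s Lemma 9.6.3 states and what the paper actually uses later (the sequence is applied with $\ho(E_v,V^-)$ and with $\ho(K_{n,v},A^-)$), so the displayed formula in the proposition is a misprint for $X_v^-$. Your argument, stopped before the forced identification, proves precisely the intended statement; the step converting $\ho(G_v,X_v^-)$ into $\ho(G_v,X)$ should be removed rather than justified.
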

\subsection{Duality for Selmer complexes}\label{sec:duality-selmer-complexes}
There is a useful duality theory for Selmer complexes. First of all let us recall some definitions for complexes.  
\begin{definition}\label{def:hom-complexes}
Let $X, Y$ complexes in $\rmod$. We define the complex $\Hom_R^\bullet(X, Y)$ by 
\[
\Hom^n_R(X, Y) = \prod_{i \in \Z} \Hom_R(X^i, Y^{i+n}),
\]
with differentials defined by the formula
\[
\diff^n f = (\diff_Y^{n + i} \circ f_i + (-1)^{n -1} f_{i +1} \circ \diff_X^{i})_{i \in \Z},
\]
for $f = (f_i)_{i \in \Z} \in \Hom_R^n(X, Y)$.
\end{definition}

\begin{proposition}[{\cite[Section 2.3.2]{nekovar:selmer-complexes}}]\label{prop:dualizing-functor}
Let $I$ be an injective hull of $k$ \cite[Definition 3.2.3]{bruns-herzog:cohen-macaulay-rings} and set $J = I[n]$, i.e.~the complex concentrated in degree $-n$ 
such that $J^{-n}= I$.
The functor $D_J = \Hom_R(\, - \,, J)$ is a dualizing functor on the category of complexes of $\rmod$, meaning that the canonical morphism 
$\epsilon \colon M^\bullet \to D_J\bigl(D_J(M^\bullet)\bigr)$ is an isomorphism in $\D_\ft\rmod$ (resp.~$\D_\coft\rmod$ ), the full subcategory of $\D\rmod$
consisting of complexes with finite (resp.~cofinite) type cohomology. Note that if $M^\bullet$ is a complex of $\rmod_\ft$ (resp.~$\rmod_\coft$), 
then $D_J(M^\bullet)$ is a complex of  $\rmod_\coft$ (resp.~$\rmod_\ft$).
\end{proposition}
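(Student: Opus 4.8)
The plan is to reduce the statement to classical local duality / Matlis duality over the complete Noetherian local ring $R$, applied degreewise, and then to bootstrap from modules to complexes via a spectral sequence (or truncation) argument. First I would recall the relevant module-level facts: since $R$ is complete Noetherian local with residue field $k$ and $I = E_R(k)$ is an injective hull of $k$, the Matlis duality functor $(-)^{\smallvee_I} = \Hom_R(-, I)$ is exact on $\rmod$ and restricts to an (anti-)equivalence between $\rmod_\ft$ (finitely generated = Noetherian modules) and $\rmod_\coft$ (Artinian = cofinite-type modules), with the biduality map $M \to M^{\smallvee_I\smallvee_I}$ an isomorphism for $M$ in either subcategory; this is Matlis's theorem, e.g.~\cite[Theorem 3.2.13]{bruns-herzog:cohen-macaulay-rings}. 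In particular $I$ itself is Artinian, so $\Hom_R(-,I)$ sends $\rmod_\ft$ into $\rmod_\coft$ and conversely (for Artinian $M$, $\Hom_R(M,I)$ is Noetherian). The shift $J = I[n]$ only relabels degrees, so $D_J = \Hom_R^\bullet(-, J)$ acts degreewise as $D_J(M^\bullet)^{i} = \Hom_R(M^{-i-n}, I)$ with the sign-twisted differential of Definition \ref{def:hom-complexes}; since $\Hom_R(-,I)$ is exact, $D_J$ is a way of computing $\RHom_R(-, J)$ with no need for resolutions, hence it descends to an exact (triangulated) functor on $\D\rmod$ that preserves the subcategories $\D_\ft\rmod$ and $\D_\coft\rmod$, swapping them.

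Next I would establish the biduality isomorphism $\epsilon \colon M^\bullet \to D_J D_J(M^\bullet)$. By naturality, $\epsilon$ is given in each degree by the classical Matlis biduality map $M^i \to \Hom_R(\Hom_R(M^i, I), I)$, up to the bookkeeping of signs and shifts, which one checks is compatible with the differentials (the two applications of the sign $(-1)^{n-1}$ in Definition \ref{def:hom-complexes} combine to give a global sign that cancels, so $\epsilon$ is genuinely a morphism of complexes). Therefore $\epsilon$ is a degreewise quasi-isomorphism — in fact a degreewise isomorphism — as soon as each cohomology module $H^i(M^\bullet)$, equivalently (since $R$ is Noetherian and we stay in the bounded-cohomology subcategories) each term can be taken in $\rmod_\ft$ resp.~$\rmod_\coft$. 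The cleanest way to see that the induced map on hypercohomology is an isomorphism is to use the hyper-Tor/hyper-Ext spectral sequence, or more simply the exactness of $\Hom_R(-,I)$: since $\Hom_R(-,I)$ is exact, $H^i(D_J M^\bullet) \cong \Hom_R(H^{-i-n}(M^\bullet), I)$ canonically, and applying this twice gives $H^i(D_J D_J M^\bullet) \cong \Hom_R(\Hom_R(H^i(M^\bullet), I), I)$, which is $H^i(M^\bullet)$ by Matlis biduality on $\rmod_\ft$ (resp.~$\rmod_\coft$). One then verifies that this chain of canonical identifications is exactly $H^i(\epsilon)$.

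The main obstacle — and the only place where care is genuinely needed — is the sign/shift bookkeeping in Definition \ref{def:hom-complexes} together with checking that the biduality map is compatible with the passage to the derived category: one must confirm that $D_J$ as defined (a literal $\Hom$ of complexes, not an a priori derived functor) actually computes $\RHom_R(-, J)$, which holds precisely because $J$ is a bounded complex of injectives, and that $\epsilon$ is a morphism in $\D\rmod$ rather than merely a degreewise map. Once exactness of $\Hom_R(-,I)$ and Matlis duality are invoked, the ``hard'' analytic content is entirely classical; the remainder is the verification that the formal machinery of \cite[Section 2.3.2]{nekovar:selmer-complexes} applies verbatim in our setting $R = \O$, $k = \kappa$, which is immediate since $\O$ is a complete DVR (a fortiori complete Noetherian local of dimension $\le 1$) with finite residue field.
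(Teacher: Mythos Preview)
The paper does not give its own proof of this proposition: it is stated with a citation to \cite[Section 2.3.2]{nekovar:selmer-complexes} and no proof environment follows. Your sketch via Matlis duality is the standard route and is essentially what underlies the cited reference, so there is nothing to compare against here.

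One small remark on your write-up: the parenthetical ``equivalently \dots\ each term can be taken in $\rmod_\ft$ resp.~$\rmod_\coft$'' is not quite right as stated, since $\D_\ft\rmod$ allows unbounded complexes and there is no reason one can replace such a complex by one with finite-type terms. Fortunately you do not actually use this; your subsequent argument via exactness of $\Hom_R(-,I)$ and the identification $H^i(D_J M^\bullet) \cong \Hom_R(H^{-i-n}(M^\bullet), I)$ works directly on cohomology and is valid regardless of boundedness (the product in Definition~\ref{def:hom-complexes} collapses to a single factor since $J$ is concentrated in one degree). So the argument is correct; just drop or rephrase that parenthetical.
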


\begin{remark}\label{rk:inj-hull}
If $R$ is a complete discrete valuation ring with fraction field $K$, $I = K/R$ is an injective hull of $k$. 
In the case of $R = \O$ the ring of integers of a finite extension of $\Q_p$ then $D(M) := \Hom_R(M, I)$ concides with the Pontryagin dual $M^\vee$ 
for $\O$-modules of finite and cofinite type, equipped the finite type modules with the $\p$-adic topology and the cofinite type ones with the discrete topology.
So in particular this holds for the $p$-adic representations $T$ (finite type) and $A$ (cofinite type). 
\end{remark}

Fix $J = I[n]$ for some $n \in \Z$ let $X, Y$ be two complexes of admissible $R[G_{K, S}]$-modules, 
$\Delta(X), \Delta(Y)$ local conditions for $X$ and $Y$ and a morphism  $\pi \colon X \otimes_R Y \to J(1)$ of complexes of $R[G_{K, S}]$-modules.
\begin{definition}\label{def:perfect}
Suppose that $X, Y$ are bounded and either the cohomology groups of $X$ are of finite type and those of $Y$ of cofinite type or 
the converse, then we say that $\pi$ is a perfect duality if the adjunction morphism \cite[Section 1.2.6]{nekovar:selmer-complexes} 
$\adj(\pi) \colon X \to D_J(Y)(1)$ is a quasi-isomorphism.
\end{definition}

For local conditions for $\Delta(X)$, $\Delta(Y)$ there is a notion of being orthogonal complements with respect to $\pi$ \cite[Lemma\,-Definition 6.2.7]{nekovar:selmer-complexes}. This notions has an important consequence:
\begin{theorem}[{\cite[Theorem 6.3.4]{nekovar:selmer-complexes}}]\label{th:duality}
Let $X, Y, J$ as above, $\pi$ a perfect duality and $\Delta(X)$ and $\Delta(Y)$ orthogonal complements. There is a map of complexes
\[
\gamma_{\pi, h_S} \colon \selmer(X) {\longrightarrow} D_{J[-3]}(\selmer(Y))
\]
that is an isomorphism in $\D\rmod$.
\end{theorem}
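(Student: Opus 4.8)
The plan is to assemble the duality isomorphism out of three more elementary inputs --- local Tate duality, global Poitou--Tate (Artin--Verdier) duality in its complex form, and the orthogonality hypothesis on the local conditions --- by tracking how each interacts with the iterated mapping cone defining $\selmer$. Throughout I replace $\contcohocomplex(G_{K,S},-)$ and each $\contcohocomplex(G_v,-)$ by bounded representatives (legitimate since $p$ is odd, so $G_{K,S}$ has $p$-cohomological dimension $\le 2$, and $G_v$ likewise for finite $v$), so that the dualizing functor $D_J=\Hom_R(-,J)$ of Proposition~\ref{prop:dualizing-functor} may be applied termwise, is exact and contravariant, and satisfies the biduality $\varepsilon\colon M^\bullet\iso D_JD_J(M^\bullet)$ of \emph{loc.\ cit.} Since $p$ is odd the archimedean places contribute nothing to mod-$p^n$ Galois cohomology, so I may freely pass between $S$ and its finite part $S_f$. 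A standing role is played by the hypothesis that $\pi$ is a perfect duality: it identifies $D_J(Y)(1)$ with $X$, which is what lets the classical dualities --- naturally formulated for $M$ against $D_J(M)(1)$ --- be read off for the pair $(X,Y)$.

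\textbf{Construction of $\gamma_{\pi,h_S}$.} The pairing $\pi\colon X\otimes_R Y\to J(1)$ together with the cup product on continuous cochains gives pairings $\contcohocomplex(G_{K,S},X)\otimes\contcohocomplex(G_{K,S},Y)\to\contcohocomplex(G_{K,S},J(1))$ and, for each finite $v$, $\contcohocomplex(G_v,X)\otimes\contcohocomplex(G_v,Y)\to\contcohocomplex(G_v,J(1))$, compatible with restriction. Composing the local pairings with the local invariant maps $\contcohocomplex(G_v,J(1))\to J[-2]$ and combining with the global cochains produces a pairing of $C^\bullet_c(X):=\Cone\big(\contcohocomplex(G_{K,S},X)\to\bigoplus_{v\in S_f}\contcohocomplex(G_v,X)\big)[-1]$ against $\contcohocomplex(G_{K,S},Y)$ valued in $J[-3]$ (the shift $3$ being the étale cohomological dimension of the ring of $S$-integers of $K$). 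On the local--condition complexes, the hypothesis that $\Delta(X)$ and $\Delta(Y)$ are orthogonal complements furnishes precisely the homotopies $h_v$ (collectively $h_S$) needed to extend the cup product coherently over the cones $U_v^{\pm}$; feeding these into the cone structures of $\selmer(X)$ and $\selmer(Y)$ yields a pairing $\selmer(X)\otimes_R\selmer(Y)\to J[-3]$, whose adjoint is the desired $\gamma_{\pi,h_S}\colon\selmer(X)\to D_{J[-3]}(\selmer(Y))$.

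\textbf{$\gamma_{\pi,h_S}$ is a quasi-isomorphism.} Apply $D_{J[-3]}$ to the defining cone of $\selmer(Y)$; being exact and contravariant it commutes with the finite sums over $v\in S_f$ and turns $\Cone(f)[-1]$ into $\Cone(D_{J[-3]}f)$ with the arrow reversed. Now substitute the three inputs: (i) local Tate duality, $D_{J[-3]}\contcohocomplex(G_v,Y)\simeq\contcohocomplex(G_v,X)[-1]$; (ii) dualizing the triangle $U_v^+(Y)\to\contcohocomplex(G_v,Y)\to\Cone(i_v^+(Y))$ and invoking the orthogonal--complement hypothesis, $D_{J[-3]}U_v^+(Y)\simeq\Cone(i_v^+(X))[-1]$; (iii) global Poitou--Tate duality in complex form, $D_{J[-3]}\contcohocomplex(G_{K,S},Y)\simeq C^\bullet_c(X)$ --- the last reduced, by d\'evissage to the finite coefficients $A[\pi^n]$ and passage to the limit, to the classical nine--term Poitou--Tate exact sequence. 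After these substitutions $D_{J[-3]}(\selmer(Y))$ becomes an iterated cone built from $\contcohocomplex(G_{K,S},X)$, the $\contcohocomplex(G_v,X)$, and the $\Cone(i_v^+(X))$; a routine octahedral rearrangement --- both $D_{J[-3]}(\selmer(Y))$ and $\selmer(X)$ are quasi-isomorphic to $\Cone\big(\bigoplus_{v\in S_f}U_v^+(X)[-1]\to C^\bullet_c(X)\big)$ --- identifies it with $\selmer(X)$. Finally one checks this abstract quasi-isomorphism coincides with $\gamma_{\pi,h_S}$: since every morphism above was built from the cup product, this is a compatibility between cup products and the local and global trace pairings, verified by a diagram chase.

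\textbf{Main obstacle.} The crux is input (iii): upgrading Poitou--Tate/Artin--Verdier duality to a genuine quasi-isomorphism of complexes over $R$. This is exactly where the standing finiteness hypotheses are used --- one of $X,Y$ must have finite--type cohomology and the other cofinite--type, and $R$ must be complete Noetherian local so that $D_J$ is a well-behaved dualizing functor with biduality (Proposition~\ref{prop:dualizing-functor}). The remaining difficulties are bookkeeping rather than conceptual: choosing bounded representatives so that $D_{J[-3]}$ can be applied termwise, keeping the shifts and signs straight through the iterated cones, and checking that the homotopies $h_v$ make $\gamma_{\pi,h_S}$ an honest morphism of complexes and not merely a morphism in $\D\rmod$.
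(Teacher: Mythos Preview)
The paper does not prove this theorem: it is stated with a citation to \cite[Theorem 6.3.4]{nekovar:selmer-complexes} and used as a black box, so there is no ``paper's own proof'' to compare against. Your sketch is a faithful high-level summary of Nekov\'a\v{r}'s argument in the cited reference --- the three inputs (local Tate duality, Poitou--Tate in complex form, orthogonality of the local conditions) and the cone/octahedral bookkeeping are exactly the skeleton of his proof --- so in that sense your approach is the standard one and is correct in outline. If anything, be aware that the passage you flag as the ``main obstacle'' (upgrading Poitou--Tate to a quasi-isomorphism of complexes over $R$) is genuinely the bulk of the work in Nekov\'a\v{r}'s Chapters 5--6, and your one-line d\'evissage remark understates it; but as a roadmap your proposal is sound.
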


Let now consider the special case of $p$-adic Galois representations: recall that, as we remarked in Section \ref{sec:p-adic-reps}, $A^\ast = D(T)$, $T^\ast = D(A)$, 
the evaluation maps
\[
\ev_2 \colon T \otimes_\O A^\ast(1) \longrightarrow (\K/\O)(1), 	\qquad \ev_1 \colon   A \otimes_\O  T^\ast(1) \longrightarrow (\K/\O)(1)
\] 
are by definition perfect dualities (in the sense of Definition \ref{def:perfect}) and the Greenberg local conditions are orthogonal complements with respect to them. Hence Theorem \ref{th:duality} holds and in this case its statement becomes the following.
\begin{theorem} \label{th:duality-rep}
There are isomorphisms
\begin{gather*}
\tildecohomology^i_f(K, T) \iso D\big(\tildecohomology^{3 - i}_f(K, A^\ast(1))\big);\\
\tildecohomology^i_f(K, A) \iso D\big(\tildecohomology^{3 - i}_f(K, T^\ast(1))\big).
\end{gather*}
\end{theorem}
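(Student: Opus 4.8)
The plan is to specialize the general duality theorem, Theorem \ref{th:duality}, to the case of $p$-adic Galois representations over the imaginary quadratic field $K$, by verifying the three hypotheses of that theorem for the complexes $X = T$, $Y = A^\ast(1)$ (and the swapped pair $X = A$, $Y = T^\ast(1)$), each concentrated in degree $0$, together with the Greenberg-type local conditions $\Delta_v(X)$ and $\Delta_v(Y)$ recalled in Section \ref{sec:selmer-complexes}. First I would fix $n = 0$, so that $J = I$ is an injective hull of the residue field $\kappa$; by Remark \ref{rk:inj-hull}, for $R = \O$ the functor $D_J = D = \Hom_\O(-, I)$ coincides with the Pontryagin dual $(-)^\vee$ on $\O$-modules of finite and cofinite type, and in particular $D(T) = T^\vee = A^\ast$ and $D(A) = A^\vee = T^\ast$ as recalled at the end of Section \ref{sec:p-adic-reps} (these identifications use Proposition \ref{prop:iso-dualities} and the diagram preceding it). Since $T$ is a finite free $\O$-module (hence of finite type) and $A$ is of cofinite type, the boundedness and finiteness/cofiniteness hypotheses of Definition \ref{def:perfect} and Theorem \ref{th:duality} are immediate once we know the relevant Selmer complexes have cohomology of finite, resp.~cofinite, type — which is part of the general machinery of \cite{nekovar:selmer-complexes} applied to our $R = \O$.

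Next I would check that the evaluation pairings $\ev_2 \colon T \otimes_\O A^\ast(1) \to (\K/\O)(1)$ and $\ev_1 \colon A \otimes_\O T^\ast(1) \to (\K/\O)(1)$ are perfect dualities in the sense of Definition \ref{def:perfect}, i.e.~that the adjunction morphism $\adj(\ev_2) \colon T \to D_J(A^\ast(1))(1) = D(A^\ast)(1)(1)^{-1}\!$ — more precisely $T \to D(A^\ast(1))(1)$ — is a quasi-isomorphism, and similarly for $\ev_1$. Because everything is concentrated in degree $0$, this reduces to the bare statement that the induced $\O$-linear $G_K$-equivariant map $T \to \Hom_\O(A^\ast, \K/\O)(1)(1)^{-1}$ is an isomorphism; unwinding the Tate twists, this is exactly the statement $A^\ast = D(T) = T^\vee$ together with the fact that $(-)^\vee$ is a duality on finite/cofinite $\O$-modules (\cite[Chapter I, Theorem 1.1.8]{neu:cnf}), which is why the excerpt says these pairings are perfect dualities "by definition". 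The analogous check for $\ev_1$ uses $T^\ast = D(A)$ and $A \to A^{\vee\vee}$ being an isomorphism for cofinite modules.

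Then I would verify that the Greenberg local conditions $\Delta(T)$ and $\Delta(A^\ast(1))$ (resp.~$\Delta(A)$ and $\Delta(T^\ast(1))$) are orthogonal complements with respect to $\ev_2$ (resp.~$\ev_1$), in the sense of \cite[Lemma-Definition 6.2.7]{nekovar:selmer-complexes}. At places $v \nmid p$ this is the standard fact that the unramified local condition $\contcohocomplex(G_v/I_v, X^{I_v}) \to \contcohocomplex(G_v, X)$ is self-orthogonal under local Tate duality (local Euler characteristic and the vanishing of $\cohomology^2$ of the unramified part), while at $v \mid p$ it is the statement that, under the ordinarity hypothesis of Section \ref{sec:framework}, the submodules $T_v^+, (A^\ast)_v^+$ (resp.~$A_v^+, (T^\ast)_v^+$) are exact orthogonal complements of each other inside $V|_{G_v}$ — this is how the self-dual filtration on $V$ and its compatibility with the pairing is used; it is exactly \cite[Proposition 7.6.7, 7.8.x]{nekovar:selmer-complexes} in our situation, and again the excerpt signals this with "the Greenberg local conditions are orthogonal complements with respect to them." With these three inputs in hand, Theorem \ref{th:duality} applied with $J[-3] = I[-3]$ yields an isomorphism in $\D(\omod)$
\[
\gamma \colon \selmer_f(K, T) \iso D_{I[-3]}\big(\selmer_f(K, A^\ast(1))\big),
\]
and taking $i$-th cohomology, together with the identity $\cohomology^i(D_{I[-3]}(C^\bullet)) = D(\cohomology^{3-i}(C^\bullet))$ (a shift of the statement of Proposition \ref{prop:dualizing-functor}, since $D_{I[-3]}$ is the dualizing functor shifted by $3$), gives the first displayed isomorphism $\tildecohomology^i_f(K, T) \iso D(\tildecohomology^{3-i}_f(K, A^\ast(1)))$; the second follows identically using $\ev_1$ in place of $\ev_2$.

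The main obstacle I anticipate is the bookkeeping of Tate twists in the perfect-duality and orthogonal-complement conditions — tracking that $J(1)$ appears as the target, that the adjunction morphism lands in $D_J(Y)(1)$, and that after the dust settles the twists cancel so that the bare maps $T \to T^{\vee\vee}$ and $A \to A^{\vee\vee}$ are what is being asserted — rather than anything genuinely deep, since the hard analytic and cohomological content is entirely absorbed into \cite[Theorem 6.3.4]{nekovar:selmer-complexes} and into the verified self-duality of the Greenberg filtration under the ordinarity assumption. A secondary point to be careful about is that the orthogonality at $v \mid p$ genuinely requires the ordinarity hypothesis and the fact that $T$ is the \emph{self-dual} lattice of \cite{nekovar:kuga-sato} (so that $V \cong V^\ast(1)$ compatibly with the filtration); without self-duality the two displayed isomorphisms would not pair $T$ with $A^\ast(1)$ so cleanly.
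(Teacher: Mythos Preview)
Your approach is correct and essentially identical to the paper's: invoke Theorem \ref{th:duality} for the perfect pairings $\ev_1, \ev_2$ with the orthogonal Greenberg local conditions (which the paper records in the paragraph immediately preceding the theorem), then pass to cohomology using that $D_{J[-3]}(\,-\,)^i = D\bigl((\,-\,)^{3-i}\bigr)$ and that $D = \Hom_\O(-,\K/\O)$ is exact because $\K/\O$ is injective.

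One small correction to your closing commentary: the orthogonality of the Greenberg conditions at $v \mid p$ does \emph{not} require $T$ to be the self-dual lattice of \cite{nekovar:kuga-sato}. Theorem \ref{th:duality-rep} is stated and proved for an arbitrary ordinary $\O$-adic representation $T$; here $A^\ast(1)$ and $T^\ast(1)$ are genuinely different from $A$ and $T$, and the orthogonality comes from taking the filtration on the dual to be the annihilator of $T_v^+$ under the evaluation pairing, which is automatic. Self-duality only enters the paper later, in Section \ref{sec:selfdual-rep}, for the specific representation attached to $f$.
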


\begin{proof}
We show only the first isomorphism, the proof of the latter one is analogous. We set $J = (\K/\O)[0]$. By Theorem \ref{th:duality}, we have
\[
\tildecohomology^i_f(K, T) = \cohomology^i\big(\selmer(K, T)\big)  \cong \cohomology^i\Big(D_{J[-3]}\Big(\selmer\big(K, A^\ast(1)\big)\Big)\Big). 
\]
But
\begin{align*}
&D_{J[-3]}\Big(\selmer\big(K, A^\ast(1)\big)\Big)^i = \prod_{j \in \Z} \Hom_\O\Bigl(\selmer\bigl(K, A^\ast(1)\bigr)^j, J^{ i + j - 3}\Bigr) \\
& =  \Hom_\O\Bigl(\selmer\bigl(K, A^\ast(1)\bigr)^{3 - i}, \K/\O\Bigr) = D \Bigl( \selmer\bigl(K, A^\ast(1)\bigr)^{3 - i} \Bigr)
\end{align*}
as  
\[
J^{ i + j - 3} = \begin{cases}
\K/\O \quad& \text{if $j = 3 - i$},\\
0 &\text{else}.
\end{cases}
\]
Therefore, since $\K/\O$ is injective and therefore $D = \Hom_\O( \, - \, , \K/\O)$ is exact and hence it commutes with the $i$-th cohomology functor
\begin{align*}
\tildecohomology^i(K, T) &\cong \cohomology^i\Big(D_{J[-3]}\Big(\selmer\big(K, A^\ast(1)\big)\Big)\Big) \cong D\Big(\tildecohomology^{3-i}_f\big(K, A^\ast(1)\big)\Big).\qedhere
\end{align*}
\end{proof}
\section{Anticyclotomic Iwasawa Theory} \label{sec:anticyclotomic-iwasawa-theory}

The aim of this section is to introduce Selmer groups and Selmer complexes over the anticyclotomic extension $K_\infty$ of $K$.

\subsection{Anticyclotomic extension}\label{sec:anticyclotomic-extension}

Recall that $K$ denotes the imaginary quadratic field of Section \ref{sec:notations}, and assume moreover that  
$p \nmid h_K$, where $h_K$ is the class number of $K$. Consider for any $n \ge 0$ the ring class field $K[p^{n+1}]$ of conductor $p^{n+1}$: $\Gal(K[p^{n+1}]/K)$ admits a canonical splitting
\[
\Gal(K[p^{n+1}]/K) \cong \Gal(K[p^{n+1}]/K[p]) \times \Gal(K[p]/K) \cong \Z/p^{n}\Z \times \Delta,
\]
where (see \cite[Theorem 7.24]{cox:primes-of-the-form} for details)
\[
\lvert \Delta \rvert = h_K \Bigl(p - \bigl(\tfrac{d_K}{p}\bigr)\Bigr) = h_K (p \pm 1),
\] 
and hence $\Delta$ has order prime to $p$. This splitting is compatible with the variation of $n$, therefore we have a splitting of the Galois group of $K[p^\infty] = \bigcup_n K[p^{n}]$ over $K$,
\[
\Gal(K[p^\infty]/K) \cong \Gal(K[p^\infty]/K[p]) \times \Gal(K[p]/K) \cong \Z_p \times \Delta.
\]
Hence $K_\infty = (K[p^\infty])^\Delta$ is a $\Z_p$-extension of $K$, called the anticyclotomic $\Z_p$-extension of $K$: it is the unique $\Z_p$-extension of $K$ which is pro-dihedral over $\Q$, i.e.
\[
\Gal(K_\infty/\Q) \cong \Gal(K_\infty/K) \rtimes \Gal(K/\Q) = \Gal(K_\infty/K) \rtimes \set{1, \tau_c},
\]
where the complex conjugation $\tau_c \in \Gal(K/\Q)$ acts by inversion, that is $\tau_c \, g \, \tau_c^{-1} = g^{-1}$ for any $g \in \Gal(K_\infty/K)$.

We fix moreover in the rest the following notations: let $\Gamma := \Gal(K_\infty/K) \cong \Z_p$ and let $K_n$, for any $n\ge 1$, 
be the unique subextension of $K_\infty/K$ such that $\Gamma_n = \Gal(K_n/K) \cong \Z/p^n\Z$. 
Denote by $\Lambda_n = \O[\Gamma_n]$ the group ring of $\Gamma_n$ and let  
$\Lambda = \projlim_n \Lambda_n = \O \llbracket \Gamma \rrbracket$ be the completed group algebra of $\Gamma$. 
It is well-known \cite[Proposition 5.3.5]{neu:cnf} that $\Lambda \cong \O\llbracket X\rrbracket$ 
via the map $\gamma \mapsto 1 + X$, where $\gamma$ is a topological generator of $\Gamma$. 
Write $\Gamma^n = \Gal(K_\infty/K_n) \subseteq \Gamma$.

\begin{remark}\label{rk:ramification-anticyclotomic}
For future reference let us study the ramification in $K_\infty$ when $p$ is split in $K$.
By general theory of $\Z_p$-extensions \cite[Proposition 13.2, 13.3]{washington:cyclotomic-fields} the ramification 
is concentrated at places above $p$ and one of them must ramify; moreover   
if $v \mid p$ ramifies in $K_\infty/K$, it is unramified in $K_n/K$ up to some $n$, but after that it is totally ramified in $K_\infty/K_n$. 
In our case, since $K_\infty/K$ is a normal extension and $p$ splits in $K$, 
both places $\p$ and $\bar{\p}$ of $K$ over $p$ have the same behavior in $K_\infty/K$, 
namely they are totally ramified as $K_\infty \cap K[1] = K$ (since $p \nmid h_K$ and $K_\infty$ is a $p$-extension).
\end{remark}
\subsection{Selmer groups for $K_\infty$}\label{sec:selmer-groups-iwasawa}

Let $T$ be an $\O$-adic representation, $V = T \otimes \K$ and $A = V/T$. Note that the cohomology groups $\hone(K_n, T)$ 
and $\hone(K_n, A)$ inherit an $\O$-module structure from $T$ and $A$, moreover the following well-known lemma applied to 
$G = G_K$, $H= G_{K_n}$ shows that they are endowed with a natural action of $G/H= \Gamma_n$ and thus they admit a 
$\Lambda_n$-module structure.

\begin{lemma}\label{lemma:action-quotient-group}
Let $G$ be a profinite group. If $X$ is a $G$-module and $H$ a closed normal subgroup of $G$, then $\hone(H, X)$ is endowed with a natural action of $G/H$.
\end{lemma}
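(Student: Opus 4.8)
The plan is to write the action down explicitly on continuous cochains, check that it descends to cohomology, and then check that the subgroup $H$ acts trivially so that the action factors through $G/H$. For $g \in G$ and a continuous $n$-cochain $c \in \contcochains^n(H, X)$, set
\[
(g \ast c)(h_1, \dots, h_n) = g \cdot c(g^{-1} h_1 g, \dots, g^{-1} h_n g);
\]
this is meaningful because $H$ is normal in $G$, so each $g^{-1} h_i g$ lies in $H$, and the resulting cochain is again continuous, being a composite of the continuous maps $(h_i)_i \mapsto (g^{-1} h_i g)_i$, the cochain $c$, and $x \mapsto g \cdot x$. A direct and entirely standard computation shows that $g \ast (-)$ commutes with the differential of the inhomogeneous continuous cochain complex — this uses only that the $G$-action on $X$ is additive and associative — so it induces an $\O$-linear endomorphism of each $\cohomology^n(H, X)$, in particular of $\hone(H, X)$. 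From $1 \ast c = c$ and $(g g') \ast c = g \ast (g' \ast c)$ one gets an action of $G$ on $\hone(H, X)$.

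It then remains to check that the restriction of this action to $H$ is trivial on cohomology, which is the classical fact that an inner automorphism induces the identity on group cohomology. Concretely, for $h_0 \in H$ and a $1$-cocycle $c$, using the cocycle relation one computes
\[
(h_0 \ast c)(h) - c(h) = h \cdot c(h_0) - c(h_0),
\]
so $h_0 \ast c$ differs from $c$ by the coboundary of the $0$-cochain $c(h_0) \in X$; the analogous but longer formula handles the higher degrees. Hence $h_0 \ast (-)$ is the identity on $\hone(H, X)$ for every $h_0 \in H$, and the $G$-action descends to the asserted $G/H$-action on $\hone(H, X)$ (and, with the same formula, on each $\cohomology^n(H,X)$). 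The whole argument is identical to the non-topological case; see e.g.\ \cite[Chapter II]{neu:cnf}.

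The only point requiring a little care beyond this algebra is continuity: one must confirm that $g \ast c$ genuinely lies in $\contcochains^n(H, X)$, which follows from continuity of the $G$-action on $X$ and of conjugation in $G$ as noted above, and — if one also wants the induced $G/H$-action on $\hone(H, X)$ to be continuous for the natural topology on it — that the orbit maps are continuous, which again reduces to these same facts together with the fact that $H$ is open in the finite-level quotients through which the relevant cochains factor. I do not expect this, or any other step, to present a genuine obstacle: the lemma is a formal consequence of the definitions, and the ``hard part'' is merely the bookkeeping check that conjugation is compatible with the differential and that inner conjugations are homotopic to the identity, both of which are routine.
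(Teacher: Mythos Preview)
Your proof is correct and is exactly the standard argument for this well-known fact. The paper does not actually supply a proof of this lemma: it is stated as a known result and the text moves on immediately, so there is nothing to compare your approach against beyond noting that your write-up is precisely the textbook proof (as in \cite[Chapter II]{neu:cnf}, which you yourself cite).
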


Moreover we may use the restriction and corestriction maps  (see \cite[Chapter II, Section 5]{neu:cnf}) in order to link the cohomology groups over the different layers $K_n$. First of all note that the formation of the Selmer groups defined in Section \ref{sec:selmer-groups} is compatible with these maps, i.e.
\[
\res_{K_{n+1}/K_n}\bigl(\hone_{\ast}(K_n, A) \bigr) \subseteq \hone_{\ast}(K_{n+1}, A), \quad  \cores_{K_{n+1}/K_n}\bigl(\hone_{\ast}(K_{n+1}, A) \bigr) \subseteq \hone_{\ast}(K_n, A)  
\]
for $\ast = f, \Gr, \str$. We may therefore define
\[
\hone_\ast(K_\infty , A) = \injlim_{n, \, \res} \hone_\ast(K_n, A) \quad \text{and} \quad \hone_\ast(K_\infty, T) = \projlim_{n, \, \cores} \hone_\ast(K_n, T),
\]
which are naturally $\Lambda$-modules as $\hone_\ast(K_n, A)$ and $\hone_\ast(K_n, T)$ are stable subgroups with respect to the $\Gamma_n$-action of Lemma \ref{lemma:action-quotient-group} and hence sub-$\Lambda_n$-modules. In the following we will be interested in the structure as $\Lambda$-modules 
of the Pontryagin dual of $\honef(K_\infty, A)$. 
\subsection{Selmer complexes for $K_\infty$}\label{sec:selmer-complexes-iwasawa}

Let $R = \Lambda$ and $\mfrak = (\pi, \gamma - 1)$ its maximal ideal and suppose  that $K_\infty \subseteq K_S$.
Denote moreover by $\iota \colon \Lambda \to \Lambda$, the involution induced by $\gamma \mapsto \gamma^{-1}$.
Nekovar defines, for $T$ complex of admissible modules in $\rgksmod_\ft$ and $M$ complex of ind-admissible modules \cite[Definition 3.3.1]{nekovar:selmer-complexes} in $\rgksmod_\coft$ supported in $\set{\mfrak}$ (i.e.~$M^i = \bigcup_{n>0} M^i[\mfrak^n]$ for any $i$)
endowed with \emph{Greenberg local conditions}, Selmer complexes 
\[
\selmeriw(K_\infty/K, T) \qquad \selmer(K_S/K_\infty, M)
\]
such that (essentially by \cite[Proposition 8.8.6]{nekovar:selmer-complexes}), if $T$ is a complex concentrated in degree $0$, with an ordinary $\O$-adic Galois representation as its $0$-th term, again denoted by $T$ by abuse of notation, and $M$ is the complex concentrated in degree $0$, with $A=V/T$ as its $0$-th term, 
\[
\honetildef(K_S/K_\infty, M) \cong \injlim_{n, \, \res} \honetildef(K_n, A); \qquad \honetildefiw(K_\infty/K, T) \cong \projlim_{n, \, \cores} \honetildef(K_n, T).
\]
In the following we will use the simplified notations $\honetildef(K_\infty, A)$ and $\honetildef(K_\infty, T)$ for these objects.

The general theory of Selmer complexes in Iwasawa theory can be found in \cite[Chapter 8]{nekovar:selmer-complexes}. We will need only special cases for $p$-adic Galois representations of the following results. Let us start with duality theory.

\begin{remark}
Note that, since $\Lambda$ has the same residue field as $\O$, then $\K/\O$ can be used in order to define the duality functor 
of Proposition \ref{prop:dualizing-functor} for $R = \Lambda$, that we denote by  $\bar{D}$. Hence for any $\Lambda$-module $M$,
\[
\bar{D}(M) = \Hom_\Lambda(M, \K/\O) = \Hom_\O(M, \K/\O) = M^\vee
\]
as $\O$-modules. However the natural action of $\lambda \in \Lambda$ on $f \in D(M)$ is $(\lambda f)(x) = f(\lambda x)$, for any $x \in M$, 
while the natural of $\Lambda$ on $M^\vee$ is the group action of $\Gamma$, i.e.~$(\gamma \cdot f)(x) = f(\gamma^{-1}\cdot x)$, for $f \in M^\vee$, $x \in M$. 
In the following we denote $M^\vee$ together with the latter $\Lambda$-module structure by $D(M)$. For any $\Lambda$-module $N$, let $N^\iota$   
be $N$ itself as set, endowed with the action 
$\lambda \cdot_{\iota} x = \iota(\lambda) \cdot x$. With these notations we have $D(M) = \bar{D}(M)^\iota$.
\end{remark} 
By the above remark and Theorem \ref{th:duality}, we have the following result.
\begin{theorem}[{\cite[Section 8.9.6.1]{nekovar:selmer-complexes}}]\label{th:duality-iwasawa}
Let $T$ and $T^\ast(1)$ be two bounded complexes of admissible modules in $\ogksmod^\ad_{\O-\ft}$ endowed with Greenberg local structures. Define moreover the bounded complexes
\[
A = D(T^\ast(1))(1), \quad A^\ast(1) = D(T)(1),
\]
that are complexes of admissible modules in $\rgksmod_{\O-\coft}$ endowed with induced Greenberg local conditions. Let $J = \K/\O[0]$.
We have quasi-isomorphisms
\begin{align*}
\selmeriw(K_\infty/K, T) &\longiso {D}_{J[-3]}\bigg( \selmer(K_S/K_\infty, A^\ast(1))^\iota\bigg),\\
\selmer(K_S/K_\infty, A) &\longiso {D}_{J[-3]}\bigg( \selmeriw(K_\infty/K, T^\ast(1))^\iota\bigg)
\end{align*}
respectively in $\D^b_\ft\lambdamod$ and $\D^b_\coft\lambdamod$.
\end{theorem}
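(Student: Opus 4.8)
The plan is to deduce this from the non-Iwasawa duality (Theorem \ref{th:duality}) applied over the ring $R = \Lambda$, rather than over $\O$, together with the dictionary between Iwasawa-theoretic Selmer complexes and their layer-by-layer descriptions. First I would set up the two complexes $T$ and $T^\ast(1)$ of admissible $\O[G_{K,S}]$-modules and reinterpret them — via the standard Iwasawa-theoretic trick, \emph{à la} Shapiro's lemma — as complexes of admissible $\Lambda[G_{K,S}]$-modules by tensoring with $\Lambda$ equipped with the tautological $G_K$-action through $\Gamma$ (this is exactly the mechanism underlying \cite[Chapter 8]{nekovar:selmer-complexes} that yields the identifications $\honetildefiw(K_\infty/K, T) \cong \projlim_{n,\cores} \honetildef(K_n, T)$ recalled above). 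Under this reinterpretation, $\selmeriw(K_\infty/K, T)$ is literally $\selmers_f$ of the induced module over the base ring $\Lambda$, and likewise $\selmer(K_S/K_\infty, A)$ corresponds to $\selmers_f$ of the induced (ind-admissible, $\mfrak$-supported) complex built from $A$.

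Next I would check that the hypotheses of Theorem \ref{th:duality} are met over $R = \Lambda$: namely that the relevant evaluation pairing $T \otimes_\Lambda A^\ast(1) \to J(1)$ is a perfect duality in the sense of Definition \ref{def:perfect}, and that the Greenberg local conditions on $T$ and on $A^\ast(1)$ are orthogonal complements with respect to it (\cite[Lemma\,-Definition 6.2.7]{nekovar:selmer-complexes}). Here one uses that $\Lambda$ is a complete Noetherian local domain with the \emph{same} residue field $k = \kappa$ as $\O$, so that $I = \K/\O$ is still an injective hull of $k$ over $\Lambda$ (Remark following Proposition \ref{prop:dualizing-functor}), and that $D_J$ over $\Lambda$ restricted to $\O$-finite-type (resp.~$\O$-cofinite-type) modules agrees with $\bar{D}$; the finiteness hypotheses ($\ft$ vs.~$\coft$ cohomology) are precisely the conditions $\ogksmod^\ad_{\O\text{-}\ft}$ and $\rgksmod_{\O\text{-}\coft}$ imposed in the statement. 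Granting this, Theorem \ref{th:duality} gives a quasi-isomorphism $\selmers_f(T) \longiso D_{J[-3]}(\selmers_f(A^\ast(1)))$ of complexes of $\Lambda$-modules; the same argument with $T$ and $T^\ast(1)$ interchanged gives the second quasi-isomorphism.

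The final — and genuinely essential — step is to account for the involution $\iota$. The subtlety is exactly the one flagged in the Remark preceding the theorem: the duality functor $D_J = \Hom_\Lambda(-, J)$ over $\Lambda$ produces the $\Lambda$-action $(\lambda f)(x) = f(\lambda x)$, whereas the Selmer complex $\selmer(K_S/K_\infty, A^\ast(1))$ carries the geometric $\Gamma$-action coming from the Galois action on cohomology; these differ precisely by $\iota$, so $D(M) = \bar{D}(M)^\iota$ and the target of the quasi-isomorphism must be written with a $(-)^\iota$ twist. I would make this precise by tracking the $\Gamma$-action through the Shapiro-type identification $\selmeriw(K_\infty/K, T) \simeq \selmers_f(T \otimes_\O \Lambda)$ — the point being that $\Gamma$ acts on $\Lambda$-coefficients both through the coefficient module and through the induction, and the mismatch is resolved by $\iota$ — and then invoke, for the passage to $\D^b_\ft\lambdamod$ and $\D^b_\coft\lambdamod$, the exactness of $D = \Hom_\O(-, \K/\O)$ (as in the proof of Theorem \ref{th:duality-rep}) to commute $D_{J[-3]}$ past cohomology if a cohomological statement is wanted. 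I expect the bookkeeping of the $\iota$-twist, and verifying the orthogonality of the Greenberg local conditions over $\Lambda$ (as opposed to over $\O$), to be the main obstacle; everything else is a faithful transcription of Theorem \ref{th:duality} with $R = \Lambda$, and indeed this is the content of \cite[Section 8.9.6.1]{nekovar:selmer-complexes}, to which I would defer for the details.
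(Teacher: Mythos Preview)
Your proposal is correct and matches the paper's approach: the paper simply states that the result follows ``by the above remark and Theorem \ref{th:duality}'' and cites \cite[Section 8.9.6.1]{nekovar:selmer-complexes} without further argument, which is exactly your plan of applying the abstract duality Theorem \ref{th:duality} over $R = \Lambda$, invoking the preceding remark to account for the $\iota$-twist, and deferring the bookkeeping to Nekov\'a\v{r}.
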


And in the special case in which $T$ is a $p$-adic representation, similarly to Theorem \ref{th:duality-rep}:
\begin{theorem}\label{th:duality-iwasawa-rep}
There are isomorphisms of $\Lambda$-modules:
\begin{gather*}
\tildecohomology^i_{f}(K_\infty, T) \iso \bar{D}\big(\tildecohomology^{3 - i}_f(K_\infty, A^\ast(1))\big);\\
\tildecohomology^i_{f}(K_\infty, A) \iso \bar{D}\big(\tildecohomology^{3 - i}_{f}(K_\infty, T^\ast(1))\big).
\end{gather*}
\end{theorem}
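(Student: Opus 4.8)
The plan is to deduce this from the complex-level duality of Theorem~\ref{th:duality-iwasawa}, in precisely the way Theorem~\ref{th:duality-rep} was deduced from Theorem~\ref{th:duality}. First I would apply Theorem~\ref{th:duality-iwasawa} with $J = \K/\O[0]$ to the case in which $T$ is a single ordinary $\O$-adic representation placed in degree $0$ (it is admissible in $\ogksmod^\ad_{\O-\ft}$, and so is $T^\ast(1)$), so that $A = D(T^\ast(1))(1)$ and $A^\ast(1) = D(T)(1)$ are the usual discrete modules placed in degree $0$. This produces the quasi-isomorphism
\[
\selmeriw(K_\infty/K, T) \longiso D_{J[-3]}\bigl(\selmer(K_S/K_\infty, A^\ast(1))^\iota\bigr)
\]
in $\D^b_\ft\lambdamod$, together with the analogous quasi-isomorphism $\selmer(K_S/K_\infty, A) \longiso D_{J[-3]}\bigl(\selmeriw(K_\infty/K, T^\ast(1))^\iota\bigr)$ in $\D^b_\coft\lambdamod$.

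Next I would pass to $i$-th cohomology. On the left, $\cohomology^i$ of $\selmeriw(K_\infty/K, T)$ is, by the simplified notation of Section~\ref{sec:selmer-complexes-iwasawa}, exactly $\tildecohomology^i_f(K_\infty, T)$. On the right, I would unwind the dualizing functor termwise as in the proof of Theorem~\ref{th:duality-rep}: writing $C = \selmer(K_S/K_\infty, A^\ast(1))$, since $J^{i+j-3}$ is $\K/\O$ when $j = 3-i$ and $0$ otherwise, the degree-$i$ term of $D_{J[-3]}(C^\iota)$ is $\Hom_\Lambda\bigl((C^{3-i})^\iota, \K/\O\bigr) = \bar D\bigl((C^{3-i})^\iota\bigr)$. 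By the Remark preceding Theorem~\ref{th:duality-iwasawa}, $\bar D = \Hom_\Lambda(-,\K/\O)$ agrees on underlying $\O$-modules with the Pontryagin dual $(-)^\vee$, so $\K/\O$ being $\O$-injective forces $\bar D$ to be exact; hence $\bar D$ commutes with passage to cohomology (up to the reindexing $i \leftrightarrow 3-i$, and signs on the differentials that are irrelevant for cohomology). Using $\cohomology^{3-i}(C^\iota) = \cohomology^{3-i}(C)^\iota = \tildecohomology^{3-i}_f(K_\infty, A^\ast(1))^\iota$, together with the compatibilities $\bar D(N^\iota) = \bar D(N)^\iota$ and $D(M) = \bar D(M)^\iota$ recorded in that same Remark, I would reorganize the $\iota$-twists to obtain the isomorphism of $\Lambda$-modules $\tildecohomology^i_f(K_\infty, T) \iso \bar D\bigl(\tildecohomology^{3-i}_f(K_\infty, A^\ast(1))\bigr)$. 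The second isomorphism is obtained verbatim from the second quasi-isomorphism above.

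The only genuinely delicate point is the bookkeeping of the various $\Lambda$-module structures: one must consistently separate Nekovar's functor $\bar D$, whose action is $(\lambda f)(x) = f(\lambda x)$, from the Pontryagin dual carrying the group action $(\gamma f)(x) = f(\gamma^{-1}x)$, and must track the involution $\iota$ that Theorem~\ref{th:duality-iwasawa} introduces. All the derived-category and homological input is already packaged in Theorems~\ref{th:duality} and~\ref{th:duality-iwasawa}, so beyond this bookkeeping the argument is a purely formal unwinding, word for word the proof of Theorem~\ref{th:duality-rep}.
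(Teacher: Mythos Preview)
Your proposal is correct and is exactly what the paper does: the paper gives no proof beyond the phrase ``similarly to Theorem~\ref{th:duality-rep}'', and you have spelled out that deduction from Theorem~\ref{th:duality-iwasawa} in detail, including the one extra ingredient (the $\iota$-twist bookkeeping) that the Iwasawa setting introduces.
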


A classical tool in Iwasawa theory for elliptic curves is Mazur's control theorem: it gives the relations between the Selmer groups over $K$ 
and $K_\infty$. In our context it generalizes to the following exact control theorem.

\begin{theorem}[{\cite[Proposition 8.10.1]{nekovar:selmer-complexes}}]\label{th:exact-control-theorem-general}
Let $T$ be a bounded complex of admissible modules in $\rgksmod^\ad_{\O-\ft}$ endowed with Greenberg local structure. There is a canonical isomorphism
\[
\selmeriw(K_\infty/K, T) \overset{\mathbb{L}}{\otimes}_{\Lambda} \O \longiso \selmer(K, T)
\]
in $\D^b\omod$ inducing a homological spectral sequence
\[
E_{i, j}^2 = \cohomology_{i, \cont}\big(\Gamma, \tildecohomology^{-j}_f(K_\infty, T)\big) \allora \tildecohomology^{-i-j}_f(K, T).
\]
In particular, dualizing it, we get also a cohomological spectral sequence
\[
E^{i, j}_2 = \cohomology^i_\cont\big(\Gamma, \tildecohomology^j_f(K_\infty, A)\big) \allora \tildecohomology^{i+j}_f(K, A).
\]
\end{theorem}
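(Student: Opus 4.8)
The plan is to deduce both statements from a single derived base-change isomorphism along the augmentation map $\Lambda\to\O$, $\gamma\mapsto 1$, whose kernel is the principal ideal $(\gamma-1)$; this is essentially \cite[Proposition 8.10.1]{nekovar:selmer-complexes}, and I would only spell out the mechanism. First I would unwind the definition of the Iwasawa Selmer complex: by construction $\selmeriw(K_\infty/K, T)$ is the Selmer complex over $K$ attached to the \emph{big} coefficient module $T\otimes_\O\Lambda^\iota$ (with $G_{K,S}$ acting diagonally through $\Gamma$) equipped with its induced Greenberg local conditions, so that the three ingredients of the defining cone --- the global term $\contcohocomplex(G_{K,S}, T\otimes_\O\Lambda^\iota)$, the local terms $\contcohocomplex(G_v, T\otimes_\O\Lambda^\iota)$, and the local-condition terms $U_v^+ = \contcohocomplex(G_v/I_v,(T\otimes_\O\Lambda^\iota)^{I_v})$ for $v\nmid p$ resp.~$\contcohocomplex(G_v, T_v^+\otimes_\O\Lambda^\iota)$ for $v\mid p$ --- are all continuous cochain complexes with $\Lambda^\iota$-twisted coefficients.

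The heart of the argument is to show that for each profinite group $G\in\{G_{K,S}, G_v, G_v/I_v\}$ occurring above and each relevant finite free $\O$-module $M$ with continuous $G$-action there is a natural isomorphism
\[
\contcohocomplex(G, M\otimes_\O\Lambda^\iota)\overset{\mathbb{L}}{\otimes}_{\Lambda}\O\;\longiso\;\contcohocomplex(G, M)
\]
in $\D^b\omod$. I would prove this by resolving $\O$ over $\Lambda$ with the two-term Koszul complex $[\Lambda\xrightarrow{\gamma-1}\Lambda]$: the left-hand side becomes the total complex of $[\contcohocomplex(G, M\otimes_\O\Lambda^\iota)\xrightarrow{\gamma-1}\contcohocomplex(G, M\otimes_\O\Lambda^\iota)]$, and since $M\otimes_\O\Lambda^\iota$ is finite free over $\Lambda$, multiplication by $\gamma-1$ is injective on it with cokernel $M$; as the short exact sequence $0\to M\otimes_\O\Lambda^\iota\xrightarrow{\gamma-1}M\otimes_\O\Lambda^\iota\to M\to 0$ admits a continuous set-theoretic splitting, $\contcohocomplex(G,-)$ is termwise exact on it and the total complex is quasi-isomorphic to $\contcohocomplex(G, M)$. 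For $v\nmid p$ I would additionally invoke that $K_\infty/K$ is unramified at $v$ (Remark \ref{rk:ramification-anticyclotomic}), so $I_v$ acts trivially on $\Lambda^\iota$ and $(M\otimes_\O\Lambda^\iota)^{I_v}=M^{I_v}\otimes_\O\Lambda^\iota$, and the same computation covers the local-condition terms. Since the mapping cone is exact and commutes with $\overset{\mathbb{L}}{\otimes}_{\Lambda}\O$, assembling these isomorphisms over the three types of terms yields $\selmeriw(K_\infty/K, T)\overset{\mathbb{L}}{\otimes}_{\Lambda}\O\longiso\selmer(K, T)$. The hard part is precisely here: one must know that $\contcohocomplex(G,-)$ behaves as an exact functor on the coefficient module $T\otimes_\O\Lambda^\iota$, which is \emph{not} of finite type over $\O$ --- this is exactly what Nekovar's admissible/ind-admissible formalism is built to control (concretely, through the identification $\contcohocomplex(G_{K,S}, T\otimes_\O\Lambda^\iota)\simeq\projlim_n\contcohocomplex(G_{K_n}, T)$ together with a Mittag--Leffler argument).

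Granting this base-change isomorphism, the homological spectral sequence is formal: for a bounded complex $C^\bullet$ of $\Lambda$-modules the hyper-$\operatorname{Tor}$ spectral sequence reads $E^2_{i,j}=\operatorname{Tor}^\Lambda_i(\cohomology^{-j}(C^\bullet),\O)\allora\cohomology^{-i-j}(C^\bullet\overset{\mathbb{L}}{\otimes}_{\Lambda}\O)$. I would apply it to $C^\bullet=\selmeriw(K_\infty/K, T)$, use the isomorphism above, the identification $\cohomology^{-j}(\selmeriw(K_\infty/K, T))=\tildecohomology^{-j}_f(K_\infty, T)$ from Section \ref{sec:selmer-complexes-iwasawa}, and compute $\operatorname{Tor}^\Lambda_i(-,\O)$ once more with the resolution $[\Lambda\xrightarrow{\gamma-1}\Lambda]$, so that $\operatorname{Tor}^\Lambda_0(M,\O)=M_\Gamma=\cohomology_{0,\cont}(\Gamma, M)$, $\operatorname{Tor}^\Lambda_1(M,\O)=M^\Gamma=\cohomology_{1,\cont}(\Gamma, M)$ and $\operatorname{Tor}^\Lambda_i(M,\O)=0$ for $i\ge 2$; this gives the stated homological spectral sequence (which, being supported in $i\in\{0,1\}$, is essentially a family of short exact sequences). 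Finally, for the cohomological spectral sequence I would run the homological one with $T^\ast(1)$ in place of $T$ and apply the exact contravariant Pontryagin dual $\bar{D}=\Hom_\O(-,\K/\O)$: this turns it into a cohomological spectral sequence, converts $\cohomology_{i,\cont}(\Gamma,-)$ into $\cohomology^i_\cont(\Gamma,\bar{D}(-))$ via the self-dual length-one resolution of $\O$ (i.e.~$\bar{D}\circ\operatorname{Tor}^\Lambda_i(-,\O)=\Ext^i_\Lambda(\O,-)\circ\bar{D}=\cohomology^i_\cont(\Gamma,-)\circ\bar{D}$), and converts $\tildecohomology^\bullet_f(K_\infty, T^\ast(1))$ and $\tildecohomology^\bullet_f(K, T^\ast(1))$ into $\tildecohomology^{3-\bullet}_f(K_\infty, A)$ and $\tildecohomology^{3-\bullet}_f(K, A)$ by Theorems \ref{th:duality-rep} and \ref{th:duality-iwasawa-rep}. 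After the reindexing $j\mapsto 3+j$ this is exactly $E^{i,j}_2=\cohomology^i_\cont(\Gamma,\tildecohomology^j_f(K_\infty, A))\allora\tildecohomology^{i+j}_f(K, A)$. The only genuine obstacle in the whole argument is the base-change isomorphism of the second paragraph; once that is in hand, the two spectral sequences are standard homological algebra.
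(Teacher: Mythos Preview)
The paper does not supply its own proof of this statement: it is recorded as a direct citation of \cite[Proposition 8.10.1]{nekovar:selmer-complexes} and used only as a black box in the proof of the subsequent Theorem~\ref{th:exact-control-theorem-representations}. Your sketch is a correct outline of the argument behind Nekov\'a\v{r}'s result --- identifying $\selmeriw(K_\infty/K,T)$ with the Selmer complex of the $\Lambda^\iota$-twisted coefficients, establishing derived base change termwise along the Koszul resolution $[\Lambda\xrightarrow{\gamma-1}\Lambda]$ of $\O$, and reading off the hyper-$\operatorname{Tor}$ spectral sequence --- and your derivation of the cohomological spectral sequence by dualizing via Theorems~\ref{th:duality-rep} and~\ref{th:duality-iwasawa-rep} is exactly how that last claim is meant to be understood.
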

In the case of a $p$-adic Galois representation it implies the following statement:
\begin{theorem}\label{th:exact-control-theorem-representations}
Suppose that $\tildecohomology_f^0(K, A) = 0$. It follows that the canonical map
\[
\tildecohomology_f^1(K, A)  \longiso \tildecohomology_f^1(K_\infty, A)^\Gamma
\]
is an isomorphism of $\O$-modules.
\end{theorem}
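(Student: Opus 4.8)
The plan is to read the statement off the control-theorem spectral sequence of Theorem \ref{th:exact-control-theorem-general}. Applied to the complex concentrated in degree $0$ whose $0$-th term is the given representation $T$, that result yields the cohomological spectral sequence
\[
E^{i, j}_2 = \cohomology^i_\cont\big(\Gamma, \tildecohomology^j_f(K_\infty, A)\big) \allora \tildecohomology^{i+j}_f(K, A).
\]
Since $\Gamma \cong \Z_p$ has $p$-cohomological dimension $1$ and every $\tildecohomology^j_f(K_\infty, A)$ is a discrete $p$-power torsion $\O$-module, one has $E^{i, j}_2 = 0$ for all $i \ge 2$; hence all higher differentials vanish, $E_2 = E_\infty$, and the filtration on the abutment gives, for each $n$, a short exact sequence of $\O$-modules
\[
0 \longrightarrow \cohomology^1_\cont\big(\Gamma, \tildecohomology^{n-1}_f(K_\infty, A)\big) \longrightarrow \tildecohomology^n_f(K, A) \longrightarrow \tildecohomology^n_f(K_\infty, A)^\Gamma \longrightarrow 0,
\]
whose right-hand arrow is the canonical restriction map of the statement. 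The argument then reduces to showing that both outer terms of the $n = 1$ instance vanish.

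For the left-hand term I would first treat $n = 0$. Over each finite layer $K_n$ the Selmer complex has cohomology concentrated in non-negative degrees, so $\tildecohomology^{-1}_f(K_n, A) = 0$, and passing to the direct limit gives $\tildecohomology^{-1}_f(K_\infty, A) = 0$. Thus the $n = 0$ sequence collapses to an isomorphism $\tildecohomology^0_f(K, A) \iso \tildecohomology^0_f(K_\infty, A)^\Gamma$, and the hypothesis $\tildecohomology^0_f(K, A) = 0$ forces $\tildecohomology^0_f(K_\infty, A)^\Gamma = 0$.

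The key step is then to upgrade this into the vanishing $\cohomology^1_\cont(\Gamma, \tildecohomology^0_f(K_\infty, A)) = 0$. Write $N = \tildecohomology^0_f(K_\infty, A)$; by Proposition \ref{prop:comparison-generalized-greenberg} applied over each $K_n$ together with passage to the limit (using that $N$ is the direct limit of the $\tildecohomology^0_f(K_n, A)$ and that filtered colimits are exact), $N$ embeds into $\cohomology^0(K_S/K_\infty, A)$, which is a submodule of the cofinite type (equivalently, artinian) $\O$-module $A$; hence $N$ is artinian over $\O$. Fixing a topological generator $\gamma$ of $\Gamma$, the vanishing $N^\Gamma = 0$ says precisely that $\gamma - 1$ acts injectively on $N$, and an injective $\O$-linear endomorphism of an artinian $\O$-module is surjective (the descending chain of its iterated images stabilizes). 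Therefore $(\gamma - 1)N = N$, i.e. $\cohomology^1_\cont(\Gamma, N) = N_\Gamma = 0$. Substituting this into the $n = 1$ sequence leaves exactly the desired isomorphism $\tildecohomology^1_f(K, A) \iso \tildecohomology^1_f(K_\infty, A)^\Gamma$.

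The only genuinely delicate point is this last step: one must identify $N$ as an artinian $\O$-module, so that injectivity of $\gamma - 1$ delivers its surjectivity, and one must correctly propagate the hypothesis $\tildecohomology^0_f(K, A) = 0$ through the $n = 0$ level of the spectral sequence. Everything else is formal bookkeeping with the spectral sequence.
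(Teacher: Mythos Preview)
Your argument is correct and, in the key step, takes a different and more self-contained route than the paper. Both proofs start from the spectral sequence of Theorem \ref{th:exact-control-theorem-general} and both first deduce $\tildecohomology^0_f(K_\infty, A)^\Gamma = 0$ from the hypothesis. The divergence is in how one passes from this to the vanishing needed at $E_2^{1,0}$. The paper dualizes via Theorem \ref{th:duality-iwasawa-rep} to get $\tildecohomology^3_f(K_\infty, T)_\Gamma = 0$, invokes Nakayama over $\Lambda$ to kill $\tildecohomology^3_f(K_\infty, T)$, and dualizes back to obtain the stronger conclusion $\tildecohomology^0_f(K_\infty, A) = 0$; it then reads the result off the five-term exact sequence. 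You instead embed $N = \tildecohomology^0_f(K_\infty, A)$ into $A$ via Proposition \ref{prop:comparison-generalized-greenberg}, note that $N$ is therefore artinian over $\O$, and use that an injective endomorphism of an artinian module is surjective to conclude $N_\Gamma = 0$ directly. Your approach is lighter in that it avoids the Iwasawa duality machinery entirely and works purely at the level of $\O$-modules; the paper's approach yields the extra information that $\tildecohomology^0_f(K_\infty, A)$ itself vanishes (and more generally that the spectral sequence is concentrated on the rows $j = 1, 2$), which is not needed for the statement but is used implicitly later in the paper. Your exploitation of $\mathrm{cd}_p(\Gamma) = 1$ to collapse the spectral sequence to two columns is also a clean replacement for the five-term sequence.
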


\begin{proof}
Consider the first quadrant cohomological spectral sequence of Theorem \ref{th:exact-control-theorem-general}. 
Note that
\[
E_\infty^{0, 0} \cong E_2^{0, 0} = \tildecohomology_f^0(K_\infty, A)^\Gamma \cong 
\frac{F^0 \tildecohomology_f^0(K, A)}{F^1 \tildecohomology_f^0(K, A)} 
= \tildecohomology_f^0(K, A) = 0.
\]
Moreover by the Theorem \ref{th:duality-iwasawa-rep}, we have
\[
0 = D\bigl( \tildecohomology_f^0(K_\infty, A)^\Gamma \bigr) \cong \tildecohomology_f^3(K_\infty, T)_\Gamma
\]
and hence $\tildecohomology_f^3(K_\infty, T) = 0$ by Nakayama's lemma and then, by taking its Pontryagin dual, we also have  $\tildecohomology_f^0(K_\infty, A) = 0$ and hence $E_2^{i, j} = 0$ for $j \le 0$. In fact $E_2^{i, j} = 0$ if $j \ne 1, 2$ 
(i.e.~the $E_2$-sheet has nonzero terms only in the horizontal half-lines $j = 1, 2$, $i \ge 0$) since we have
\[
\tildecohomology^j(K_\infty, A) \cong D\bigl(\tildecohomology_f^{3-j}(K_\infty, T) \bigr) = 0
\]
for $j \ge 3$, by Theorem \ref{th:duality-iwasawa-rep}.
In particular the five term exact sequence 
\[
0 \to E_2^{1, 0}  \to E^1 \to E_2^{0, 1} \to E_2^{2, 0} \to E^2
\]
becomes
\[
0 \to  0 \to \tildecohomology_f^{1}(K, A) \to  \tildecohomology_f^{1}(K_\infty, A)^{\Gamma} \to  0 \to \tildecohomology_f^{2}(K, A)
\]
and hence the edge morphism $\tildecohomology_f^{1}(K, A) \to \tildecohomology_f^{1}(K_\infty, A)^{\Gamma}$ is an isomorphism.  
\end{proof}
\section{Modular forms and Generalized Heegner cycles}\label{sec:mod-forms-gen-heegner-cycles}

Let $f = \sum_{n \ge 1} a_n q^n$ be a cusp form of level $\Gamma_0(N)$ and weight $k \ge 2$. 
Suppose moreover that $f$ is a normalized Hecke eigen-newform (thus $a_1 = 1$, $T(\ell)f = a_\ell f$ for any rational prime $\ell$).
Let $F$ be the finite extension of $\Q$ in $\bar{\Q}$ generated over $\Q$ by all the $a_n$'s, called the \emph{Hecke field} of $f$, 
and let $\O_F$ be its ring of integers. Let $\p$ be the prime of $F$ over $p$ induced by $i_p$ and from now on denote by $\K$ the completion inside $\bar{\Q}_p$ of $F$ at $\p$, by $\O$ its ring of integers and, with a slight abuse of notation, 
denote by $\p$ also its maximal ideal. We make the extra assumption that $f$ is $\p$-odinary, i.e.~$i_p(a_p) \in \O^\times$. 

The aim of this paragraph is to introduce a notion of Selmer group and Shafarevich--Tate group attached to such a modular form and to study them 
under some hypothesis. 
\subsection{Self-dual representation}\label{sec:selfdual-rep}

Let $V_\p$ be the (dual of the) representation of $G_\Q$ attached to $f$ by Deligne in \cite{deligne:formes-modulaire} 
and denote its associated group morphism by 
\[
\rho_{f, \p} \colon G_\Q \to \GL_2(\K);
\]
$V_\p$ is unramified outside the finite set of rational primes
$S = \set{ \ell \;\text{prime}\;  \colon \ell \nmid pN} \cup \set{\infty}$ and
\[
\Tr\big(\rho_{f, \p}(\Frob_\ell)\big) = i_p (a_\ell), \quad \det\big(\rho_{f, \p}(\Frob_\ell)\big) = \ell^{k - 1},
\]
for any prime $\ell \ne p$. Ribet \cite[Theorem 2.3, Proposition 2.2]{ribet:nebentypus} proved moreover that $\rho_{f, \p}$ is irreducible and that $\det \rho_{f, \p} = \chi_p^{k-1}$.

However we will not use directly this representation but we will use instead $V = V_\p^\ast(k/2)$ since it has a property that makes it closer to
the Tate module of an elliptic curve: it is self-dual, i.e.~$V^\ast(1)\cong V$. 
Indeed Nekovar in \cite{nekovar:kuga-sato} 
shows that $V$ has a $G_\Q$-equivariant lattice $T$ equipped with a $G_\Q$-equivariant, 
skew-symmetric, non-degenerate $\O$-linear pairing $[\,-,-\,] \colon T \times T \to \O(1)$, in particular this means that $T^\ast(1) \cong T$, 
and tensoring with $\K$, this shows that $V$ is self-dual. 

Furthermore, since $f$ is $\p$-ordinary, we may describe this representation more explicitly.
A result of Wiles \cite[Theorem 2.2.2]{wiles:lambda-adic} tells us that $V_\p$  
is $p$-ordinary (in the sense of \cite{greenberg:iwasawa-motives}): it is reducible as a representation of $G_p$, 
and more precisely ${\rho_{f, \p}}_{|G_p}$ is equivalent to a representation of the form
\[
\begin{pmatrix}
\epsilon_1 & \ast\\
0 & \epsilon_2
\end{pmatrix},
\]
where $\epsilon_2$ is the unramified character of $G_p$ (i.e.~$\epsilon_2(I_p) = \set{1}$) such that $\epsilon_2(\Frob_p) = \alpha$, for $\alpha \in \O^\times$ 
the unit root of the polynomial $X^2 - i_p(a_p)X + p^{k-1} = 0$ (which exists since $f$ is $\p$-ordinary). 
Hence $V_\p^\ast$ (whose associated group morphism is $\traspinv{{\rho_{f, \p}}}$) as a representation of $G_p$ 
is equivalent to 
\[
\begin{pmatrix}
\epsilon_1^{-1} & 0\\
\ast & \epsilon_2^{-1}
\end{pmatrix},
\]
and, conjugating then by the matrix $\big(\begin{smallmatrix} 0&1\\1&0\end{smallmatrix}\big)$, to 
\[
\begin{pmatrix}
\epsilon_2^{-1} & \ast\\
0 & \epsilon_1^{-1}
\end{pmatrix} = 
\begin{pmatrix}
\delta & \ast\\
0 & \delta^{-1}\chi_p^{1 - k}
\end{pmatrix},
\]
denoting $\epsilon_2^{-1}$ by $\delta$, since $\epsilon_1 \epsilon_2 = \det \rho_{f, \p} = \chi_p^{k - 1}$. 
Twisting by the $(k/2)$-th power of the cyclotomic character 
$\chi_p \colon G_p \to \Z_p^\times$ it follows that $V$ as a representation of $G_p$ is equivalent to 
\[
\begin{pmatrix}
\delta\chi_p^{k/2} & \ast\\
0 & \delta^{-1}\chi_p^{1 - k/2}
\end{pmatrix}.
\]
We find therefore an exact sequence of $\K[G_p]$-modules 
\[
0 \to V^+ \to V \to V^- \to 0,
\]
where $V^{\pm}$ has dimension $1$ over $\K$, $G_p$ acts on $V^+$ as $\delta \chi_p^{k/2}$ and on $V^-$ as $\delta^{-1} \chi_p^{1- k/2}$.

\begin{remark}\label{rk:charpol-frobenius-ell}
Let us remark for later reference that, since the group morphism associated to $V$ is $\chi_p^{k/2} \cdot \traspinv{{\rho_{f, \p}}}$, 
it follows that the characteristic polynomial of $\Frob_\ell$ over $T$ for any $\ell \ne p$ is
\[
\charpol(\Frob_\ell \vert T) = X^2 - \frac{i_p(a_\ell)}{\ell^{k/2-1}}X + \ell.
\]
Indeed the action of $\Frob_\ell$ is given by $\ell^{k/2} \cdot \trasp{\rho_{f, \p}(\Frob_\ell)}^{-1}$: its trace is $i_p(a_\ell)/\ell^{k/2-1}$ and its determinant is $\ell$.
\end{remark}
\subsection{Generalized Heegner cycles}\label{sec:gen-heegner-cycles}

From now on let $V$ and $T$ denote respectively the self-dual representation attached to $f$ and its lattice, as in Section \ref{sec:selfdual-rep}, and set $A = V/T$. 
The aim of this article is to study the Selmer groups, in particular the Bloch--Kato one, attached to $A$, 
since they generalize the classical notion of Selmer group of an elliptic curve to the case of modular forms.
In order to do that we need the \emph{generalized Heegner cycles}, firstly introduced by Bertolini, Darmon and Prasanna in \cite{bdp:generalized}, more precisely we will use a slight modification of them of Castella and Hsieh \cite{castella-hsieh:heegner-cycles-p-adic-l-functions}. We will briefly recall this construction.

Let $W_{k-2}$ denote the Kuga--Sato variety of level $\Gamma_1(N)$ and dimension $k-1$, 
i.e.~the canonical desingularization (see \cite[Lemme 5.4]{deligne:formes-modulaire} and \cite[Section 1.0.3]{scholl:motives-modular-forms}) of the $(k-2)$-fold self-fiber product of the universal generalized elliptic curve with $\Gamma_1(N)$-level structure. 
This is a scheme defined over $X_1(N)$ and the fiber at any non-cuspidal point $P = (E, \eta_N) \in X_1(N)$, where $E$ is an elliptic curve and $\eta_N$ a $\Gamma_1(N)$-level structure on $E$, is $E^{k-2}$.
 Fix moreover a (complex) elliptic curve $A$ with complex multiplication by $\O_K$. By the theory of complex multiplication 
(see for instance \cite[Theorem II.4.1, Theorem II.2.2(b)]{silverman:advanced}) $A$ is defined over the Hilbert class field $K[1]$ of $K$ and there is an isomorphism
\[
[\; - \;] \colon \O_K \iso \End_{K[1]}(A),
\]
normalized in such a way that $[\alpha]^\ast \omega = \alpha \omega$ for any $\omega \in \Omega^1_{A/K[1]}$, $\alpha \in \O_K$. 

\begin{definition}\label{def:generalized-kuga-sato}
The \emph{generalized Kuga--Sato variety} of level $\Gamma_1(N)$ and dimension $(2k-3)$ is the $(2k - 3)$-dimensional variety 
$X_{k-2} = W_{k-2} \times_{K[1]} A^{k-2}$ defined over $K[1]$.
\end{definition}
Composing the structural morphism of $W_{k-2}$, with the morphism $X_{k-2} \to W_{k-2}$ we get a proper morphism
$\pi \colon X_{k-2} \to X_1(N)$, whose fibers over a non-cuspidal point $P = (E, \eta_N)$ is $E^{k-2} \times A^{k-2}$.

Let $\Nfrak$ be a cyclic ideal of $\O_K$ of order $N$ (that there exists thanks to the Heegner hypothesis on $K$), and let $t_A \in A[\Nfrak](\C)$ be an $\Nfrak$-torsion point.
Consider the set
\[
\Isog(A) = \set{(\phi, A') : \text{$A'$ elliptic curve and $\phi \colon A \to A'$ is an isogneny defined over $\bar{K}$ }}/\cong,
\]
where $(\phi_1, A_1') \cong (\phi_2, A_2')$ if there is a $\bar{K}$-isomorphism $\iota \colon A_1' \to A_2'$ such that $\phi_2 = \iota \phi_1$. 
The generalized Heegner cycles are indexed over the subset $\Isog^{\Nfrak}(A)$ of $\Isog(A)$ consisting of isogenies $\phi \colon A \to A'$ 
whose kernel intersect $A[\Nfrak]$ trivially. 
A pair $(\phi, A') \in \Isog^{\Nfrak}(A)$ determines a point 
$P_\phi = (A', \phi(t_A))$ on $X_1(N)(\C)$. 
Let $\iota_{A'} \colon (A')^{k-2} \inj W_{k-2}$ be the embedding of $(A')^{k-2}$ as the fibre of $P_{A'}$ with respect to the structural morphism of $W_{k-2}$. Consider then the cycle $\Upsilon_\phi = \Graphfun(\phi)^{k-2}$ on the variety
\[
(A \times A')^{k-2} \iso (A')^{k-2} \times A^{k-2} \xhookrightarrow{\iota} W_{k-2} \times A^{k-2} = X_{k-2}.
\]
\begin{definition}\label{def:generalized-heegner-cycles}
Consider the idempotent of \cite[(2.2.1)]{bdp:generalized} 
\[
\epsilon_X \in \corr^0(X_{k-2}, X_{k-2}) \otimes \Z\Bigl[\frac{1}{2N(k-2)!}\Bigr].
\]
The generalized Heegner cycle attached to the isogeny $\phi \in \Isog^{\Nfrak}(A)$ is the cycle 
\[
\Delta_\phi = (\epsilon_X)_\ast \Upsilon_\phi = (\epsilon_X)^\ast \Upsilon_\phi \in CH^{k-1}(X_{k-2}) \otimes \Z\biggl[\frac{1}{2N(k-2)!}\biggr],
\]
supported on the fibre $X_P := \pi^{-1} (P_\phi) = (A')^{k-2} \times A^{k-2}$.
\end{definition}

\begin{proposition}[{\cite[Proposition 2.7]{bdp:generalized}}]
The cycle $\Delta_\phi$ is homologically trivial.
\end{proposition}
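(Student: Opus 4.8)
The plan is to prove that the cohomology class $\cl(\Delta_\phi)$ vanishes in every Weil cohomology theory by an elementary degree count, exploiting that the idempotent $\epsilon_X$ isolates a single cohomological degree of $X_{k-2}$ — the middle one, $2k-3$ — which is odd, hence different from the degree $2k-2$ in which the class of a codimension-$(k-1)$ cycle lives. Since $\Delta_\phi \in \chow^{k-1}(X_{k-2})\otimes\Z[1/2N(k-2)!]$ and $\cl$ is equivariant for the action of algebraic correspondences, it suffices to show that $\epsilon_X$ annihilates $\cohomology^{2k-2}(X_{k-2})$.

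First I would decompose $\cohomology^\bullet(X_{k-2})$ by the Künneth formula along $X_{k-2} = W_{k-2}\times A^{k-2}$, compatibly with the factorisation $\epsilon_X = \epsilon_W \otimes \epsilon_A$ of \cite[(2.2.1)]{bdp:generalized}, and then record the two vanishing inputs. On the Kuga--Sato side, the work of Deligne \cite[Lemme 5.4]{deligne:formes-modulaire} and Scholl \cite{scholl:motives-modular-forms} gives $\epsilon_W\cohomology^j(W_{k-2}) = 0$ for all $j \ne k-1$, the variety $W_{k-2}$ being $(k-1)$-dimensional. On the CM side, $\epsilon_A$ is (up to the harmless Hecke factors that do not affect the argument) the product of the Künneth projectors onto $\cohomology^1$ of each of the $k-2$ factors of $A^{k-2}$, so $\epsilon_A\cohomology^\bullet(A^{k-2}) = \cohomology^1(A)^{\otimes(k-2)}$ is concentrated in degree $k-2$, i.e. $\epsilon_A\cohomology^j(A^{k-2}) = 0$ for $j \ne k-2$. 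Feeding both into Künneth yields $\epsilon_X\cohomology^j(X_{k-2}) = 0$ unless $j = (k-1)+(k-2) = 2k-3$.

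To conclude, recall that $\Delta_\phi = (\epsilon_X)_\ast \Upsilon_\phi$, so $\cl(\Delta_\phi) = (\epsilon_X)_\ast\cl(\Upsilon_\phi)$ lies in the image of $\epsilon_X$ acting on $\cohomology^{2k-2}(X_{k-2})$, since $\Upsilon_\phi$ — and therefore $\Delta_\phi$ — has codimension $k-1$ in the $(2k-3)$-fold $X_{k-2}$. As $2k-2 \ne 2k-3$, that image is zero by the previous step, so $\cl(\Delta_\phi) = 0$, which is exactly the assertion that $\Delta_\phi$ is \emph{homologically trivial}; the same argument applies verbatim in the $\ell$-adic, Betti and de Rham realisations.

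The only step that is not pure bookkeeping is the vanishing $\epsilon_W\cohomology^j(W_{k-2}) = 0$ for $j \ne k-1$; but this is precisely the basic structural property of the motive attached to modular forms, which may be quoted from Deligne and Scholl, and checking that $\epsilon_X$ acts as an idempotent on cohomology already with $\Z[1/2N(k-2)!]$-coefficients is immediate. Everything else — the Künneth decomposition, the compatibility of the cycle class map with the correspondence $\epsilon_X$, and the numerics of the degrees — is routine, so I expect no serious obstacle beyond correctly invoking the structure theory of $\cohomology^\bullet(W_{k-2})$.
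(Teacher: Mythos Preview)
Your argument is correct and is precisely the standard proof: the idempotent $\epsilon_X$ concentrates the cohomology of $X_{k-2}$ in the odd middle degree $2k-3$, while the cycle class of $\Delta_\phi$ lives in degree $2k-2$, so $\cl(\Delta_\phi)=\epsilon_X\cl(\Upsilon_\phi)=0$. The paper itself does not give a proof at all --- it simply refers the reader to \cite{bdlp:generalized} (and the original \cite[Proposition~2.7]{bdp:generalized}), where exactly this degree-count argument is carried out; so you have supplied what the paper only cites. One minor inaccuracy: your description of $\epsilon_A$ as ``the product of the K\"unneth projectors onto $\cohomology^1$'' is a slight oversimplification (in \cite{bdp:generalized} it also involves the sign idempotent for the symmetric group and the CM involution), but this does not affect the only feature you use, namely that $\epsilon_A\cohomology^j(A^{k-2})=0$ for $j\ne k-2$.
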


\begin{proof}
A detailed proof of this fact can be found in \cite{bdlp:generalized}.   
\end{proof}
In particular this means that $\Delta_\phi$ lays into the domain of the $p$-adic Abel-Jacobi map, that we are going to define in the next section.

\begin{remark}
As observed in \cite{bdp:generalized} one can deal with rationality questions about the generalized  Heegner cycles $\Delta_\phi$:  
they are always defined over some abelian extension of $K$. 
The next construction of Castella and Hsieh (see \cite[]{castella-hsieh:heegner-cycles-p-adic-l-functions}) selects a subclass of these cycles that are 
rational over the ring class fields $K[n]$ and whose properties are closer to those of the classical CM points, in particular they satisfy a sort of 
`Shimura reciprocity law' and some `norm relations' (see \cite[Lemma 4.3 and Proposition 4.4]{castella-hsieh:heegner-cycles-p-adic-l-functions}, 
these properties are crucial in order to make them into an Euler system and to use them into anticyclotomic Iwasawa theory: this last observation is the reason 
why they are better suited for us than the classical Heegner cycles of \cite{nekovar:kuga-sato}.
\end{remark}

Assume from now on that $p$ is split in $K$ and, if $k > 2$, we make the extra assumption that $d_K$ is odd or $8 \vert d_K$. 
Under these assumption we may fix a canonical elliptic curve $A$,
in the sense of Gross (see \cite[Theorem 0.1]{yang:cm-av}), which is characterized by the 
following properties:
\begin{itemize}
    \item it has CM by $\O_K$;
    \item $A(\C) = \C/\O_K$;
    \item it is a $\Q$-curve (see\cite[]{gross:arithmetic-elliptic-curves-cm}) defined over the real subfield $K[1]^+ = \Q\bigl(j(\O_K)\bigr)$ 
            of the Hilbert class field $K[1]$ of $K$;
    \item the conductor of $A$ is divisible only by prime factors of $d_K$.
\end{itemize}

Let now $c$ be a positive integer and $\mathscr{C}_c = c^{-1}\O_c/\O_K$, where $\O_c$ denotes the order of $\O_K$ with conductor $c$. Hence $\mathscr{C}_c$ 
is a cyclic subgroup of order $c$; the elliptic curve such that $A_c(\C)=A/\mathscr{C}_c$ is an elliptic curve defined over the real subfield 
$K[c]^+ = \Q\bigl(j(\O_c)\bigr)$ of the ring class field of $K$ of conductor $c$. Let $\phi_c \colon A \to A_c$ be the isogeny given by the quotient map.
Consider now a fractional $\O_c$-ideal $\afrak$ prime to $cd_K\p\Nfrak$ and the elliptic curve $A_\afrak$ such that $A_\afrak(\C) = \C/\afrak^{-1}$. 
The map $\C/c^{-1}\O_c \to \C/\afrak^{-1}$ defined by $z \mapsto cz$ corresponds to an isogeny $\lambda_\afrak \colon A_c \to A_\afrak$ and we define the isogeny 
\[
\phi_\afrak = \lambda_{\afrak} \circ \phi_c \colon A \to A_\afrak.
\]
For a suitable choice of $t_A$ (see \cite[Section 2.3]{castella-hsieh:heegner-cycles-p-adic-l-functions}, note that $t_A$ is written there with $\eta_c$), we have therefore a  
generalized Heegner cycle
\[
\Delta_\afrak := \Delta_{\phi_\afrak} \in \chow^{k-1}(X_{k-2}/K[c]) \otimes \Z\biggl[\frac{1}{2N(k-2)!}\biggr],
\]
in particular we write $\Delta_c := \Delta_{\O_c}$.
\subsection{$p$-adic Abel-Jacobi maps}

For a smooth variety $X$ over a field $E$ of characteristic $0$,  Jannsen \cite[Remark 6.15(c)]{jannsen:cont-et} defined a $p$-adic Abel-Jacobi map 
\[
\AJ^\et_X : \chow^i(X/E)_0 \to \cohomology^1\bigl(G_E, \etcohomology^{2i-1}(\bar{X}, \Z_p(i))\bigr) = \Ext^1_{G_E}\bigl(\Z_p, \etcohomology^{2i-1}(\bar{X}, \Z_p(i))\bigr),
\]
where $\bar{X} = X \otimes_E \bar{E}$, so that for any cycle $Z$ on $X$ homologous to zero, $\AJ^\et_X(Z)$ is the extension $E_Z$
obtained by the pull-back in the rightmost square of the following diagram
\[
\begin{tikzcd}
0 \ar[r] & \etcohomology^{2i-1}(\bar{X}, \Z_p(i)) \ar[r] & \etcohomology^{2i-1}(\bar{U}, \Z_p(i)) \ar[r] & \cohomology^{2i}_{\et, \lvert\bar{Z}\rvert}(\bar{X}, \Z_p(i)) \ar[r] & \etcohomology^{2i}(\bar{X}, \Z_p(i))\\
0 \ar[r] & \etcohomology^{2i-1}(\bar{X}, \Z_p(i)) \ar[r] \ar[u, equal] & E_Z \ar[r] \ar[u, hook] & \Z_p \cdot \bar{\cl}_X(\bar{Z}) \ar[u, hook] \ar[r] & 0
\end{tikzcd},
\]
where $\lvert\bar{Z}\rvert$ is the support of $\bar{Z}$, $\bar{U} = \bar{X} \setminus \lvert\bar{Z}\rvert$ and $\bar{\cl}_X$ is the composition of the $p$-adic étale cycle map \cite[Lemma 6.14 and Theorem 3.23]{jannsen:cont-et}
\[
\cl_X = \cl_X^i \colon \chow^i(X/E) \to \etcohomology^{2i}(X, \Z_p(i)),
\]
and the restriction $\etcohomology^{2i}(X, \Z_p(i)) \to \etcohomology^{2i}(\bar{X}, \Z_p(i))^{G_E}$.

In the following let $E$ be a number field containing the Hilbert class field $K[1]$ of $K$, so that $X = X_{k-2}$ is defined over $E$, and let $i=k-1$. Applying the projector $\epsilon_X$ to the diagram above and allowing coefficients of cycles in $\Z_p$, we obtain a $p$-adic Abel-Jacobi map  
\[
\AJ^\et_E \colon \chow^{k-1}(X_{k-2}/E)_0 \otimes \Z_p \to \cohomology^1\bigl(G_F, \epsilon_X \etcohomology^{2k-3}(\bar{X}_{k-2}, \Z_p(k-1))\bigr).
\]
In particular if $\Delta_\phi$ is $E$-rational we may compute $\AJ^\et_E(\Delta_\phi)$ as the extension $E_{\Delta_\phi}$ obtained 
by pull-back in the rightmost square of the following diagram, where $X_P^\flat := X_{k-2} \setminus X_P$ (observe that $\Delta_{\phi}$ is supported in $X_P$), 
\[
\begin{tikzcd}[cramped, column sep= tiny]
0 \ar[r] & \epsilon_X \etcohomology^{2k-3}(\bar{X}_{k-2}, \Z_p)(k-1) \ar[r] & E_{\Delta_{\phi}} \ar[d] \ar[r] & \Z_p \cdot \bar{cl}_{X_P}(\bar{\Delta}_\phi) \ar[d, hook] \ar[r] & 0\\
0 \ar[r] & \epsilon_X \etcohomology^{2k-3}(\bar{X}_{k-2}, \Z_p)(k-1) \ar[r] \ar[u, equal] & \epsilon_X \etcohomology^{2k-3}(\bar{X}^{\flat}_{P}, \Z_p)(k-1) \ar[r] & \epsilon_X \etcohomology^{2k-4}(\bar{X}_P, \Z_p)(k-2) \ar[r] & 0
\end{tikzcd}.
\]
Castella and Hsieh show moreover \cite[Section 4.2, 4.4]{castella-hsieh:heegner-cycles-p-adic-l-functions} that for any 
locally algebraic anticyclotomic character 
\[
\chi \colon \Gal(K[p^\infty]/K) \to \O_F^\times
\] 
of infinity type $(j, -j)$ there is a $G_K$ equivariant projection
\[
\epsilon_X \etcohomology^{2k-3}(\bar{X}_{k-2}, \Z_p(k-1)) \to T \otimes \chi;
\]
for $\chi = \triv$, tensoring with $\O$, this gives a $p$-adic Abel-Jacobi map
\[
\AJ^\et_E \colon \chow^{k-1}(X_{k-2}/E)_0  \otimes \O \to \hone(E, T).
\]
In particular we will be interested in $\AJ^\et_c := \AJ^\et_{K[c]}$, in fact the generalized Heegner cycle $\Delta_c$ is rational over $K[c]$, 
therefore we may consider its image $z_{f, c} := \AJ^\et_c(\Delta_c) \in \hone(K[c], T)$. 
These cohomology classes, that we still call generalized Heegner cycles, 
will play the role of the Heegner points on elliptic curves: they form an anticyclotomic Euler system.
\subsection{The Shafarevich--Tate group}\label{sec:shafarevich-tate-mod-forms}

In this section we will generalize to our context also the Mordell-Weil and the Shafarevich--Tate groups of an elliptic curve. 
Let $\imajt(K[c]) = \im \AJ^\et_{\O, c}$, that we will take as an analogue of the $K[c]$-rational points of an elliptic curve in the higher weight case. 
Note that  $\imajt(K[c]) \subseteq \honef(K[c], T)$ by \cite[Section II.1.4]{nekovar:p-adic-height-heegner-cycles} 
or \cite{niziol:p-adic-regulators} and hence $\imajt(K[c]) \otimes \K/\O$ injects into $\honef(K[c], A)$. Following the approach of Nekovar \cite[Section 11]{nekovar:kuga-sato} we give the following definition. 
\begin{definition}\label{def:imajt-ring-class-fields}
We define the ($\p$-part of the) Shafarevich--Tate group $\shat(f/K[c])$ of $f$ over $K[c]$ by the exact sequence 
\[
\begin{tikzcd}
0 \ar[r] & \imajt(K[c]) \otimes \K/\O \ar[r] & \honef(K[c], A) \ar[r] & \shat(f/K[c]) \ar[r] & 0.
\end{tikzcd}
\]
\end{definition}

We thank the anonimous referee for pointing it out to us that this definition is essentially equal to the one given by Bloch--Kato in \cite[(5.13)]{bloch-kato:l-funct-tamagawa} assuming a suitable standard conjecture on the bijectivity of the regulator map.

Note moreover that a similar definition could be given for $\shat(f/K)$, if we had a definition of $\imajt(K)$, but this cannot be defined as the image of the Abel-Jacobi map 
as above for $K[c]$, as the Abel-Jacobi map was defined only for a field $E$ cointaining $K[1]$. However we can give an alternative definition of it that 
suits our purpose. Set $\calG_1 = \Gal(K[1]/K)$ and consider the restriction map
\[
\res_{K[1]/K} \colon \hone(K, T) \to \hone(K[1], T)^{\Gal(K[1]/K)}.
\]
Under some standard assumptions, as those that we will assume in Definition \ref{def:admissible-primes}, this becomes an isomorphism of $\O$-modules and it restricts to an isomorphism
\[
\res_{K[1]/K} \colon \honef(K, T) \iso \honef(K[1], T)^{\calG_1}.
\]
\begin{definition}\label{def:image-abel-jacobi-K}
Assume that the restriction $\res_{K[1]/K}$ is an isomorphism, in this case define
\[
\imajt(K) := \res_{K[1]/K}^{-1}\bigl( \imajt(K[1])^{\calG_1} \bigr)  \subseteq \honef(K, T).
\] 
And we define the ($\p$-part of the) Shafarevich--Tate group $\shat(f/K)$ of $f$ over $K$ by the short exact sequence
\[
\begin{tikzcd}
0 \ar[r] & \imajt(K) \otimes \K/\O \ar[r] & \honef(K, A) \ar[r] & \shat(f/K) \ar[r] & 0.
\end{tikzcd}
\]
\end{definition}
The main reason why we give this definition is because, in order to use the Heegner cycles to give informations about $\honef(K, A)$, $\imajt(K)$ and $\shat(f/K)$,
we need that $\imajt(K)$ contains the \emph{basic generalized Heegner cycle}
\[
z_{f, K} = \cores_{K[1]/K} (z_{f, 1}).
\]
Indeed
\[
\res_{K[1]/K} (z_{f, K}) = \Tr_{K[1]/K} (z_{f, 1}) \in \imajt(K[1])^{\calG_1}.
\] 
Hence $z_{f, K} \in \imajt(K)$. 

\begin{remark}\label{rk:classical-heegner-cycles}
Several papers (see for instance \cite[]{nekovar:kuga-sato}, \cite{besser:finiteness-sha}, \cite{longo-vigni:beilinson}, \cite{masoero:sha}) 
use classical Heegner cycles instead of the generalized ones; they consider therefore a different Abel-Jacobi map, 
\[
\AJ^\et \colon \chow^{k/2}(\kugasatoplain{N}{k-2}/K)_0 \otimes \O \to \honef(K, T), 
\]
where $\kugasatoplain{N}{k-2}$ is the Kuga--Sato variety of level $\Gamma(N)$ and dimension $k-1$, 
defining $\Lambda_\p(K)$ and the Shafarevich--Tate group  $\sha_{\p^\infty}(f/K)$ respectively as the image and the cokernel of this map. 
These may, at least in principle, differ by the groups $\imajt(K)$ and $\shat(f/K)$ that we defined here. 
\end{remark} 
\section{Vanishing of $\shat(f/K)$}\label{sec:vanishing-of-sha}

In \cite[Section 7.3]{castella-hsieh:heegner-cycles-p-adic-l-functions} Castella and Hsieh use the classes $z_{f, c}$ in order to construct an 
anticyclotomic Euler system as in the following definition. Let $\squarefree$ denote the set of square-free products 
of primes $\ell$ inert in $K$ such that $\ell \nmid 2pN$. For any $\ell$ inert in $K$ let $\Frob_\ell := \Frob_\lambda$, 
the Frobenius element at $\lambda$ in $G_K$, where $\lambda$ is the unique prime of $K$ over $\ell$. 
The embedding $i_\ell$ induces a compatible sequence of primes $\lambda_n$ of $K[n]$ over $\lambda$ with $n \in \squarefree$, 
by $K[n]_\lambda$ we mean the completion of $K[n]$ at $\lambda_n$ and by $\kappa_n$ its resudue field. Let $\loc_{\ell}$ be the localization map 
\[
\loc_\ell \colon \hone(K[n], T) \to \hone(K[n]_\lambda, T).
\]
Let $w_f \in \set{\pm 1}$ be the eigenvalue of $f$ with respect to the Atkin-Lehner involution $w_N$ 
and let $\sigma_{\bar{\Nfrak}}$ be the image by the Artin reciprocity map of $\overline{\Nfrak}$, i.e.~the automorphism $\sigma_{\bar{\Nfrak}} \in \Gal(K[1]/K)$ corresponding to the class of $\overline{\Nfrak}$ in the class group of $K$ by class field theory.

\begin{definition}
An anticyclotomic Euler system for $f$ is a collection $\set{c_n}_{n \in \squarefree}$ of cohomology classes $c_n \in \hone(K[n], T)$ such that for any 
$n = m\ell \in \squarefree$:
\begin{enumerate}[label=(E\arabic*)]
\item $\cores_{K[n]/K[m]}(c_n) = a_\ell \, c_m$;
\item $\loc_\ell(c_n) = \res_{K[m]_\lambda/K[n]_\lambda}(\Frob_\ell \cdot \loc_\ell(c_m))$;
\item $\tau_c \cdot c_n = w_f ({\sigma_{\bar{\Nfrak}}} \cdot c_n)$.
\end{enumerate}
\end{definition}

Castella and Hsieh prove that the set $\set{z_{f, n}}_{n \in \squarefree}$ form an Euler system  
and they use it as an input of the Kolyvagin's method, as modified by Nekovar in \cite{nekovar:kuga-sato}, leading to the following theorem:
\begin{theorem}[{\cite[Theorem 7.7]{castella-hsieh:heegner-cycles-p-adic-l-functions}}]\label{th:selmer-group-V-castella-hsieh}
If $z_{f, K}$ is non-torsion in $\hone(K, T)$, then 
\[
\honef(K, V) = \K \cdot z_{f, K}.
\]
\end{theorem}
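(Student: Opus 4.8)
The plan is to deduce the statement from the anticyclotomic Euler system $\{z_{f,n}\}_{n \in \squarefree}$ of the previous section by Kolyvagin's descent, in the form adapted to Heegner cycles by Nekovar; equivalently, to prove $\rk_\O \honef(K,T) = 1$, which together with $z_{f,K}$ being non-torsion and lying in $\honef(K,T)$ yields $\honef(K,V) = \honef(K,T)\otimes_\O \K = \K\cdot z_{f,K}$. First I would fix the combinatorial input. For $M \ge 1$ let $\mathcal{P}_M$ denote the \emph{Kolyvagin primes at level $M$}: rational primes $\ell$ inert in $K$, $\ell \nmid 2pN$, for which $\Frob_\ell$ acts on $T/p^M T$ with characteristic polynomial $\equiv X^2 - 1 \pmod{p^M}$ — by Remark \ref{rk:charpol-frobenius-ell} this is the condition $p^M \mid i_p(a_\ell)$ and $\ell \equiv -1 \pmod{p^M}$ — so that $A[p^M]$ is a trivial $G_{K_\lambda}$-module for $\lambda$ the prime of $K$ above $\ell$, and both $\honef(K_\lambda, A[p^M])$ and $\hones(K_\lambda, A[p^M])$ are free of rank one over $\O/p^M$. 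For such $\ell$ the ring class extension $K[m\ell]/K[m]$ is cyclic of degree $\ell+1$ with a chosen generator $\sigma_\ell$, and one has Kolyvagin's derivative $D_\ell = \sum_{i=1}^{\ell} i\,\sigma_\ell^i$, satisfying $(\sigma_\ell - 1)D_\ell = (\ell+1) - \Norm_\ell$; put $D_n = \prod_{\ell \mid n} D_\ell$. Throughout one uses the running hypotheses (Section \ref{sec:framework}): $\p$-ordinarity of $f$, $p\nmid h_K$, and big image / irreducibility of $\bar\rho = \rho_{A[\p]}|_{G_K}$, which supply $A[p^M]^{G_{K[n]}} = 0$, $\hone(K(A[p^M])/K, A[p^M]) = 0$, and the Chebotarev inputs below.

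The descent step is then the standard one. Using the norm relation (E1) one gets $(\sigma_\ell - 1)D_n z_{f,n} \equiv 0 \pmod{p^M}$ for each $\ell \mid n$ (because $\ell + 1 \equiv 0$ and $i_p(a_\ell) \equiv 0 \pmod{p^M}$), so $D_n z_{f,n}\bmod p^M$ is $\Gal(K[n]/K[1])$-fixed, and using $p \nmid h_K$ together with $A[p^M]^{G_{K[n]}}=0$ one descends it to a Kolyvagin class $\kappa_n = \kappa_n(M) \in \hone(K, A[p^M])$, with $\kappa_1$ the reduction of $z_{f,K} = \cores_{K[1]/K}(z_{f,1})$. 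The key local properties, drawn from (E2) and the ramification/ordinarity of $V$, are: $\kappa_n$ lies in $\honef(K_v, A[p^M])$ for every place $v \nmid n$ — at $v\mid p$ because the $p$-adic Abel--Jacobi image lands in $\honef$ and $V$ is ordinary there, and at $v \mid N$ because $z_{f,n}$ is the Abel--Jacobi image of an algebraic cycle hence locally geometric; while at a prime $\ell \mid n$, $\loc_\ell(\kappa_n)$ lies on the singular line $\hones(K_\lambda, A[p^M])$ and has the same order there as $\loc_\ell(\kappa_{n/\ell}) \in \honef(K_\lambda, A[p^M])$ under the finite--singular comparison isomorphism (this is exactly where the Euler factor $\charpol(\Frob_\ell \mid T)$ and the shape of $\mathcal{P}_M$ enter). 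Finally, (E3) forces a definite sign for the complex conjugation $\tau_c$ acting on each $\kappa_n$.

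The heart of the proof is the resulting Selmer bound. Suppose for contradiction $\honef(K,V) \supsetneq \K\cdot z_{f,K}$; since $z_{f,K}$ is non-torsion there is then $s \in \honef(K,T)$ that is $\O$-independent from $z_{f,K}$, which we may take primitive modulo torsion. Fix $M$ large relative to the $p$-divisibility index $\calI_p$ of $z_{f,K}$ (the invariant of Theorem \ref{th:besser-intro}). By Chebotarev density applied to $K(A[p^M])$ together with the extensions cut out by the Kummer-type classes attached to $z_{f,K}$ and to $s$ — using largeness of $\bar\rho$ to ensure these classes are independent on the relevant $\tau_c$-eigenspace — choose $\ell \in \mathcal{P}_M$ with $\loc_\ell(z_{f,K})$ of minimal order in $\honef(K_\lambda, A[p^M])\cong \O/p^M$ and $\loc_\ell(s)$ not in the $\O$-span of $\loc_\ell(z_{f,K})$ there. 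Feeding $\kappa_\ell$ and $s$ into global duality, $\sum_v \langle \loc_v(\kappa_\ell), \loc_v(s)\rangle_v = 0$; for $v \ne \ell$ the two localizations lie in mutually annihilating local conditions (unramified versus unramified away from $p$, Bloch--Kato versus Bloch--Kato at $p$ and at $v \mid N$), so those terms vanish and hence $\langle \loc_\ell(\kappa_\ell), \loc_\ell(s)\rangle_\ell = 0$. But $\loc_\ell(\kappa_\ell)$ generates $\hones(K_\lambda, A[p^M])$ up to the index $\calI_p$, local Tate duality pairs $\hones(K_\lambda, A[p^M])$ perfectly with $\honef(K_\lambda, A[p^M])$, and $\loc_\ell(s)$ is transverse to $\loc_\ell(z_{f,K})$, so for $M > 2\calI_p$ the pairing is nonzero — a contradiction. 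Hence $\honef(K,V) = \K\cdot z_{f,K}$. (Alternatively, one may organize $\{\kappa_n\}$ into a Kolyvagin system and invoke the Mazur--Rubin structure theorem directly.)

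I expect the main obstacle to be precisely this Chebotarev step: producing a single prime $\ell$ that simultaneously lies in $\mathcal{P}_M$ — a stringent congruence when $M$ is large — and at which the local images of $z_{f,K}$ and of the hypothetical extra class $s$ are controlled in the required transverse way. Making this rigorous rests on the big-image and irreducibility hypotheses on $\bar\rho|_{G_K}$, on careful bookkeeping of $p$-divisibilities through $\calI_p$, and on tracking the $\tau_c$-eigenspaces, where (E3) is indispensable; by contrast the derivative computation, the descent, and the local analyses at $\ell$, at $p$, and at $v \mid N$ are the routine parts of the Euler-system machine.
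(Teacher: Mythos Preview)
The paper does not give its own proof of this theorem: it is quoted as \cite[Theorem 7.7]{castella-hsieh:heegner-cycles-p-adic-l-functions}, and the surrounding text only says that Castella--Hsieh feed the Euler system $\{z_{f,n}\}$ into Kolyvagin's method as modified by Nekov\'a\v{r}. Your sketch is exactly that method, and the Kolyvagin classes you build are the classes $P(n)$ that the paper itself constructs in Section~\ref{sec:euler-system-generalized-heegner-cycles} for the refined Theorem~\ref{th:besser}; so approach-wise you are aligned both with the cited source and with the paper's later development.

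One genuine imprecision in your bounding step: the condition ``$\loc_\ell(s)$ not in the $\O$-span of $\loc_\ell(z_{f,K})$'' is not well-posed, because for $\ell \in \mathcal{P}_M$ the module $\honef(K_\lambda, A[p^M])$ is free of rank~$1$ over $\O/p^M$, so every element lies in the span of any generator. The actual argument separates the two $\tau_c$-eigenspaces (this is where Proposition~\ref{prop:besser-property-complex-conjugation}/(E3) is used): $z_{f,K}$ lies in the $w_f$-eigenspace, and one first bounds that eigenspace using $\kappa_\ell$ for a single Kolyvagin prime, then bounds the $(-w_f)$-eigenspace using classes $\kappa_{\ell_1\ell_2}$ indexed by products of two primes (cf.\ the inductive use of Proposition~\ref{prop:besser-property-finite-singular}). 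Your closing paragraph already flags the eigenspace bookkeeping as the delicate point; once you organize the duality argument eigenspace-by-eigenspace, the sketch goes through.
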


In fact they prove \cite[see][Theorem 7.19]{castella-hsieh:heegner-cycles-p-adic-l-functions} that under the non-torsion hypothesis on $z_{f, K}$,
there is a constant $C$ 
such that
\[
p^C \biggl( \frac{\honef(K, A)}{(\K/\O)z_{f, K}} \biggr) = 0,
\]
or, if we are in a situation such that $\res_{K[1]/K}$ is an isomorphism and hence $\shat(f/K)$ can be defined, 
that $p^C$ kills $\shat(f/K)$.
For classical Heegner cycles, \cite{besser:finiteness-sha} shows that the method of \cite{nekovar:kuga-sato} can 
be refined in some cases, in order to compute this constant $C$. We will show now that the same holds for generalized Heegner cycles.
\subsection{Non-exceptional primes}\label{sec:non-exceptional-primes}
We need first to define a set $\Psi(f)$ of \emph{exceptional primes}, that we exclude, depending of $f$, that we suppose from now on to be a 
{non-CM} modular form.
\begin{definition}\label{def:admissible-primes}
Let $\Psi(f)$ be the set of rational primes consisting of the following primes:
\begin{itemize}
\item the primes $p \mid 6N\phi(N)(k-2)!$;
\item the primes that ramify into the Hecke field of $f$;
\item the primes such that the image of $\rho_{f, p} \colon G_\Q \to \GL_2(F \otimes \Q_p)$ does not contain the set 
\[
\set{g \in \GL_2(\O_F \otimes \Z_p) : \det g \in (\Z_p^\times)^{k-1} }.		
\]
\end{itemize}
\end{definition}

\begin{remark}\label{rk:admissible-primes}
The previous definition slightly differs from \cite[Definition 6.1]{besser:finiteness-sha} as we need to exclude also $p \mid \phi(N)(k-2)!$ 
in order to get the integrality of the Abel-Jacobi map and since we want to work with the self-dual lattice $T$ coming from \cite{nekovar:kuga-sato}; 
we exclude also the prime $3$ as $\SL_2(\F_2) \cong S_3$ is solvable and therefore Lemma \ref{lemma:res-iso} does not 
hold anymore. However this last condition is not necessary in order to get Theorem \ref{th:besser}, 
but it allows us to state it in terms of $\shat(f/K)$ and we will assume it in our applications. 
\end{remark}

The hypothesis on the image of $\rho_{f, \p}$ is a kind of ``big image'' property and it is satisfied for all but a finite number of primes by 
\cite[Theorem 3.1]{ribet:l-adic-ii}. It implies many technical properties on the reduced representations $A[p^M]$ and their cohomology, as the next lemma. 
From now on we use the notations $\calG_n = \Gal(K[n]/K)$ and $G(n) = \Gal(K[n]/K[1])$.
\begin{lemma}\label{lemma:res-iso}
Let $p$ be a non-exceptional prime. Then $\ho(K[n], A[p^M])=\ho(K, A[p^M])=0$ for any $M \ge 1$. 
In particular the restriction maps 
\begin{align*}
\res_{K[n]/K[1]} \colon &\hone(K[1], A[p^M]) \longrightarrow \hone(K[n], A[p^M])^{G(n)},\\
\res_{K[1]/K} \colon &\hone(K, A[p^M]) \longrightarrow \hone(K[1], A[p^M])^{\calG_1}
\end{align*}
are isomorphisms.
\end{lemma}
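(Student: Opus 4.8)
The plan is to prove the vanishing of $\ho(K[n], A[p^M])$ first, and then deduce the statements about restriction maps from the inflation--restriction exact sequence. For the first part, I would argue by contradiction: if $\ho(K[n], A[p^M]) \ne 0$, then (since $A[p^M]$ is an $\O/p^M$-module and cohomology of a finite module is built up from its $\pi$-torsion) the subgroup $\ho(K[n], A[\pi]) = A[\pi]^{G_{K[n]}}$ would be nonzero. Now $A[\pi] \cong T/\pi T$ is the residual representation $\bar\rho$, and by the big image hypothesis (Definition \ref{def:admissible-primes}) the image of $\bar\rho$ on $G_\Q$ contains $\SL_2(\F_q)$-type subgroups; the key point is that $K[n]/\Q$ is solvable (it is abelian over the imaginary quadratic field $K$, which is abelian over $\Q$), so $\bar\rho(G_{K[n]})$ still contains a large (in particular, non-solvable, or at least without nonzero fixed vectors) subgroup — here one uses that $\SL_2(\F_q)$ for $q > 3$ has no solvable quotients of the relevant index and, crucially, that the standard representation of $\SL_2(\F_q)$ has no nonzero fixed vectors. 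This forces $A[\pi]^{G_{K[n]}} = 0$, contradiction. The exclusion of $p = 3$ (so that $\F_q$ has more than $3$ elements, i.e. one avoids $\SL_2(\F_2) \cong S_3$ and $\SL_2(\F_3)$ being too small) is exactly what makes this step work, as flagged in Remark \ref{rk:admissible-primes}.

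Once $\ho(K[n], A[p^M]) = 0$ is established for all $n$ (and in particular for $n = 1$ and the implicit $n = $ "trivial", i.e. $\ho(K, A[p^M]) = 0$), the statements about the restriction maps follow from the five-term inflation--restriction exact sequence. For $\res_{K[n]/K[1]}$, apply inflation--restriction to the normal subgroup $G_{K[n]} \trianglelefteq G_{K[1]}$ with quotient $G(n) = \Gal(K[n]/K[1])$ acting on $A[p^M]$:
\[
0 \to \cohomology^1\bigl(G(n), \ho(K[n], A[p^M])\bigr) \to \hone(K[1], A[p^M]) \xrightarrow{\res} \hone(K[n], A[p^M])^{G(n)} \to \cohomology^2\bigl(G(n), \ho(K[n], A[p^M])\bigr).
\]
Since $\ho(K[n], A[p^M]) = 0$, both outer terms vanish and $\res_{K[n]/K[1]}$ is an isomorphism. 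The argument for $\res_{K[1]/K}$ is identical, using $G_{K[1]} \trianglelefteq G_K$ with quotient $\calG_1 = \Gal(K[1]/K)$ and the vanishing of $\ho(K[1], A[p^M])$ (which is the $n = 1$ case of the first part).

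The main obstacle, and the only genuinely nontrivial point, is the group-theoretic input: showing that the big-image condition on $\rho_{f,\p}$ (stated for $G_\Q$) descends to give the absence of nonzero $G_{K[n]}$-fixed vectors in $\bar\rho$. This requires controlling how much the image can shrink upon passing from $G_\Q$ to the finite-index solvable-over-$\Q$ subgroup $G_{K[n]}$; the standard way is to note that $[K[n]:\Q]$ is coprime to $p$ in the relevant range (the prime-to-$p$ part of the ring class field, together with $[K:\Q] = 2$ and $p$ odd), so $\bar\rho(G_{K[n]})$ still surjects onto the non-abelian simple quotient of $\bar\rho(G_\Q)$, which then has no fixed vectors in the standard $2$-dimensional representation over $\F_q$ with $q \geq 4$. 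Everything else — the reduction from $A[p^M]$ to $A[\pi]$, the dévissage in $M$, and the two inflation--restriction sequences — is routine. I would also remark that the hypotheses $d_K \neq -3, -4$ and $(d_K, pN) = 1$ ensure the ring class fields behave well and that $\O_K^\times = \{\pm 1\}$, which is implicitly used when identifying $A[\pi]$ with the residual representation and its Galois action.
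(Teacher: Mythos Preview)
Your approach is essentially the same as the paper's: the paper establishes the vanishing of $\ho(K[n], A[p^M])$ and $\ho(K, A[p^M])$ by citing \cite[Lemma 3.10]{longo-vigni:beilinson} together with the observation that $K/\Q$ and $K[n]/\Q$ are solvable, and then invokes the inflation--restriction exact sequence for the isomorphism claims --- exactly as you do. Your proposal correctly unpacks the content of that cited lemma (big image plus solvability of $K[n]/\Q$ forces $\bar\rho(G_{K[n]}) \supseteq \SL_2(\kappa)$, which has no nonzero fixed vector, then d\'evissage from $A[\pi]$ to $A[p^M]$).

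One minor correction: the alternative justification you sketch in the final paragraph, via coprimality of $[K[n]:\Q]$ with $p$, is neither needed nor correct in the generality of the lemma --- the statement is for arbitrary $n$, and the paper later applies it (via Remark \ref{rk:res-iso-T}) to ring class fields of $p$-power conductor. The solvability argument you give first is the right one and already suffices: since $\Gal(K[n]/\Q)$ is solvable and $\mathrm{PSL}_2(\kappa)$ is non-abelian simple for $|\kappa| \ge 4$, the intersection $\bar\rho(G_{K[n]}) \cap \SL_2(\kappa)$ must be all of $\SL_2(\kappa)$, independently of any divisibility of $[K[n]:\Q]$ by $p$. Also, the closing remark about $\O_K^\times = \{\pm 1\}$ is not relevant to identifying $A[\pi]$ with the residual representation; that identification is purely a matter of $T/\pi T \cong A[\pi]$ as $G_\Q$-modules.
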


\begin{proof}
It follows from \cite[Lemma 3.10]{longo-vigni:beilinson}, since $K/\Q$ and $K[n]/\Q$ are solvable extensions, and by the inflation-restriction exact sequence.
\end{proof}

\begin{remark}\label{rk:res-iso-T}
Lemma \ref{lemma:res-iso} implies in particular that $\imajt(K)$ and $\shat(f/K)$ can be defined. Indeed 
\[
\ho(K[1], T) = \projlim_m \ho(K[1], A[p^m]) = 0
\]
and therefore $\res_{K[1]/K}$ is an isomorphism by the inflation-restriction exact sequence.
\end{remark}

An other interesting consequence of Lemma \ref{lemma:res-iso} is the following corollary.
\begin{corollary}\label{cor:selmer-image-aj-free}
If $p$ is not an exceptional prime, then the $\O$-module $\hone(K, T)$ is torsion-free. In particular $\honef(K, T)$ and $\imajt(K)$ are free 
$\O$-modules of finite rank.
\end{corollary}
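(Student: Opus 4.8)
The plan is to deduce torsion-freeness of $\hone(K, T)$ directly from the vanishing $\ho(K, A[p^M]) = 0$ provided by Lemma \ref{lemma:res-iso}, then bootstrap to the statements about $\honef(K,T)$ and $\imajt(K)$. First I would recall that $T = \projlim_M A[p^M]$ and that continuous cohomology commutes with this inverse limit (the system $\{A[p^M]\}$ satisfies Mittag--Leffler since the transition maps are surjective), so that $\hone(K, T) = \projlim_M \hone(K, A[p^M])$; in particular any torsion element of $\hone(K,T)$ would be $p^m$-torsion for some $m$. To rule this out, suppose $x \in \hone(K, T)$ satisfies $p^m x = 0$. Using the short exact sequence $0 \to T \xrightarrow{p^m} T \to A[p^m] \to 0$ of $G_K$-modules and the associated long exact cohomology sequence
\[
\ho(K, A[p^m]) \to \hone(K, T) \xrightarrow{p^m} \hone(K, T),
\]
the element $x$ lifts to the image of $\ho(K, A[p^m]) = \ho(K, A)[p^m]$, which vanishes by Lemma \ref{lemma:res-iso}; hence $x = 0$. (Alternatively one observes directly that the kernel of multiplication by $p^m$ on $\hone(K,T)$ is a quotient of $\ho(K, A[p^m]) = 0$.) This gives that $\hone(K,T)$ is $\O$-torsion-free, since $\O$-torsion in a module over the DVR $\O$ is the same as $p$-power torsion.

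Next, $\honef(K,T) \subseteq \hone(K,T)$ is a submodule of a torsion-free $\O$-module, hence torsion-free; and it is of finite type over $\O$ because it is contained in $\hone(K_S/K, T)$, which is a finitely generated $\O$-module by the standard finiteness of Galois cohomology with coefficients in a finite free module over the complete local ring $\O$ (cf. the general setup of Section \ref{sec:selmer-groups}). A finitely generated torsion-free module over the DVR $\O$ is free, so $\honef(K,T)$ is free of finite rank. Finally $\imajt(K) = \res_{K[1]/K}^{-1}\bigl(\imajt(K[1])^{\calG_1}\bigr) \subseteq \honef(K, T)$ is again a submodule of a finite free $\O$-module, hence itself finite free.

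I do not expect a genuine obstacle here: every step is a formal consequence either of Lemma \ref{lemma:res-iso} or of elementary commutative algebra over the DVR $\O$ together with standard finiteness of Galois cohomology. The only point requiring a word of care is the passage $T = \projlim A[p^M]$ together with the interchange of $\hone$ and $\projlim$ — but this is routine since the relevant inverse system has surjective transition maps — and in any case it can be bypassed entirely by the direct argument with the Bockstein-type sequence $0 \to T \xrightarrow{p^m} T \to A[p^m] \to 0$ given above, which only uses $\ho(K, A[p^m]) = 0$.
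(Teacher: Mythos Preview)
Your proof is correct and follows essentially the same route as the paper: the key step in both is the Bockstein-type long exact sequence coming from $0 \to T \xrightarrow{p^m} T \to A[p^m] \to 0$ together with the vanishing $\ho(K, A[p^m]) = 0$ from Lemma \ref{lemma:res-iso}, followed by the observation that a finitely generated torsion-free module over the DVR $\O$ is free. The paper uses $m=1$ directly (which suffices since one only needs to kill $\p$-torsion), and your inverse-limit/Mittag--Leffler remark is unnecessary, as you yourself note; otherwise the arguments are the same.
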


\begin{proof}
It is known that $\honef(K, T)$ is finitely generated of finite rank over $\O$, therefore the second statement follows immediately from the first. 
Since an $\O$-module may have only $\p$-torsion, it is enough to show that 
$\hone(K, T)[\p] = 0$. Since $p$ is unramified in $F$ by assumption and hence $p$ is a uniformizer of $\O$, therefore we have $\hone(K, T)[\p] = \hone(K, T)[p]$.

Consider now  the short exact sequence 
$
\begin{tikzcd}[cramped, sep=small]
0 \ar[r] & T \ar[r, "\cdot p"] & T \ar[r, twoheadrightarrow] & A[p] \ar[r] & 0
\end{tikzcd}
$, 
the induced long exact sequence in cohomology
\[
\begin{tikzcd}
\ar[r] & \ho(K, A[p]) \ar[r] & \hone(K, T) \ar[r, "\cdot p"] & \hone(K, T) \ar[r] & \hone(K, A[p]) & \ar[from=l]    
\end{tikzcd}
\]    
shows that $\hone(K, T)[p]$ is a quotient of $\ho(K, A[p])$ and the latter group is trivial by Lemma \ref{lemma:res-iso}.
\end{proof}

Finally we state the main result of this section.
\begin{theorem}\label{th:besser}
Let $p$ be a non-exceptional prime and $z_{f, K}$ be non-torsion in $\hone(K, T)$. Then 
\[
p^{2\calI_p} \shat(f/K) = 0,
\]
where $\calI_p$ is the smallest non-negative integer such that $z_{f, K}$ is nonzero in $\hone(K, A[p^{\calI_p + 1}])$. In particular, if $\calI_p = 0$, 
then $\shat(f/K)=0$ and $\imajt(K) \otimes \K/\O = \honef(K, A) = z_{f, K} \cdot \K/\O$.  
\end{theorem}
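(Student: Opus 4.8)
The plan is to run Kolyvagin's descent with the anticyclotomic Euler system of generalized Heegner cycles, following \cite{nekovar:kuga-sato} and incorporating the quantitative refinement of \cite{besser:finiteness-sha}, now using the cycles of \cite{bdp:generalized} in the Castella--Hsieh normalization \cite{castella-hsieh:heegner-cycles-p-adic-l-functions} in place of the classical ones. I would first record a reduction. Theorem \ref{th:selmer-group-V-castella-hsieh} and the non-torsion hypothesis give $\honef(K, V) = \K \cdot z_{f, K}$, so $\honef(K, A)$ has $\O$-corank $1$; since $\imajt(K)$ is free of rank $1$ over $\O$ by Corollary \ref{cor:selmer-image-aj-free} (it contains the non-torsion class $z_{f, K}$), the submodule $\imajt(K) \otimes \K/\O \cong \K/\O$ is exactly the maximal divisible submodule of $\honef(K, A)$, so $\shat(f/K)$ is already finite. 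Choosing $M$ sufficiently large (in particular larger than the exponent of $\shat(f/K)$) and using $\ho(K, A) = 0$ (a consequence of the irreducibility of $A[p]$ furnished by Lemma \ref{lemma:res-iso}), the natural surjection $\hone(K, A[p^M]) \twoheadrightarrow \hone(K, A)[p^M]$ is an isomorphism, and the Bloch--Kato local conditions for $A[p^M]$ and for $A$ correspond under it (the comparison factors being trivial because $p$ is non-exceptional), so that, writing $\bar z$ for the image of $z_{f, K}$ in $\hone(K, A[p^M])$, one has $\shat(f/K) \cong \honef(K, A[p^M])/(\imajt(K)/p^M)$ with $\O\cdot\bar z \subseteq \imajt(K)/p^M$. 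It therefore suffices to prove $p^{2\calI_p}\,\honef(K, A[p^M]) \subseteq \O\cdot\bar z$; and by the very definition of $\calI_p$ one has $\bar z = p^{\calI_p}\tau$ with $\tau$ primitive, i.e.\ nonzero in $\hone(K, A[p])$.

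Next I would assemble the Kolyvagin classes. By \cite{castella-hsieh:heegner-cycles-p-adic-l-functions} the family $\{z_{f, n}\}_{n \in \squarefree}$ is an anticyclotomic Euler system. Applying Kolyvagin's derivative operators at level $M$ --- over primes $\ell$ inert in $K$ with $\ell \nmid 2pN$ for which $\Frob_\ell$ acts on $A[p^M]$ with characteristic polynomial $\equiv (X-1)(X+1) \bmod p^M$ (so that $\Frob_\ell^2 \equiv 1$ on $A[p^M]$; recall $\charpol(\Frob_\ell \vert T) = X^2 - \tfrac{i_p(a_\ell)}{\ell^{k/2-1}}X + \ell$ by Remark \ref{rk:charpol-frobenius-ell}) --- and descending from $\hone(K[n], A[p^M])$ to $\hone(K, A[p^M])$ by means of the vanishing $\ho(K[n], A[p^M]) = 0$ and the restriction isomorphisms of Lemma \ref{lemma:res-iso}, one produces for each squarefree product $n$ of such primes a class $\kappa_n \in \hone(K, A[p^M])$ with the usual properties: $\loc_v(\kappa_n) \in \honef(K_v, A[p^M])$ for every $v \nmid n$; for $\ell \mid n$ a canonical finite--singular comparison isomorphism $\hones(K_\ell, A[p^M]) \cong \honef(K_\ell, A[p^M])$ identifies the image of $\loc_\ell(\kappa_n)$ with $\pm\loc_\ell(\kappa_{n/\ell})$; and $\kappa_1 = \bar z$ up to a $p$-adic unit (the Euler factors occurring being prime to $p$). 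Finally, self-duality $T \cong T^\ast(1)$ yields $A[p^M]^\vee(1) \cong A[p^M]$, so local Tate duality provides perfect pairings $\langle\,,\,\rangle_v$ on $\hone(K_v, A[p^M])$ for which $\honef(K_v, A[p^M])$ is its own orthogonal complement.

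Then I would run the Kolyvagin argument. Given $s \in \honef(K, A[p^M])$, I would choose --- by a Chebotarev argument, using that $\tau$ is primitive and that $\rho_{f, \p}$ has big image ($p$ non-exceptional) --- a first Kolyvagin prime $\ell_1$ detecting $\bar z$, i.e.\ with $\loc_{\ell_1}(\bar z)$ of order exactly $p^{M-\calI_p}$; then $\kappa_{\ell_1}$ has singular localization at $\ell_1$ of order $p^{M-\calI_p}$ and $\loc_v(\kappa_{\ell_1}) \in \honef(K_v, A[p^M])$ for all $v \ne \ell_1$, so the global reciprocity law $\sum_v \langle \loc_v(s), \loc_v(\kappa_{\ell_1})\rangle_v = 0$ collapses (all terms with $v \ne \ell_1$ vanishing by local self-orthogonality of $\honef$) to $\langle \loc_{\ell_1}(s), \loc_{\ell_1}(\kappa_{\ell_1})\rangle_{\ell_1} = 0$, which constrains $\loc_{\ell_1}(s)$ and lets me subtract a suitable $\O$-multiple $c\bar z$ of $\bar z$ so as to kill the component it detects, reducing to $s' = s - c\bar z \in \honef(K, A[p^M])$ whose localization at $\ell_1$ is annihilated. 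A second Kolyvagin prime $\ell_2 \ne \ell_1$, chosen so that $\loc_{\ell_2}(\kappa_{\ell_1})$ again has order $p^{M-\calI_p}$ (using that $\kappa_{\ell_1}/p^{\calI_p}$ is still primitive), together with the analogous reciprocity relation for the pair $(s', \kappa_{\ell_1\ell_2})$, then pins down $s'$; combining the two steps --- each of which costs a factor $p^{\calI_p}$ --- one obtains $p^{2\calI_p}s \in \O\cdot\bar z$, which is the desired inclusion. In the case $\calI_p = 0$ this gives $\honef(K, A[p^M]) = \O\cdot\bar z$ for all $M$, hence $\shat(f/K) = 0$ and $\honef(K, A) = \imajt(K)\otimes\K/\O = z_{f, K}\cdot\K/\O$.

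The step I expect to be the main obstacle is the quantitative bookkeeping in the last paragraph: making the two Chebotarev selections compatible with the congruence conditions defining level-$M$ Kolyvagin primes, and tracking precisely how the divisibility index $\calI_p$ of $z_{f, K}$ is inherited by the derived classes $\kappa_{\ell_1}$ and $\kappa_{\ell_1\ell_2}$, so that exactly $p^{2\calI_p}$ annihilates $\shat(f/K)$. This is the heart of the refinement in \cite{besser:finiteness-sha} of the cruder constant implicit in \cite{nekovar:kuga-sato}, and it is where one must verify that the generalized Heegner cycles of \cite{bdp:generalized}, with their norm and Shimura-reciprocity relations, behave formally like the classical Heegner cycles; everything else --- the Euler system axioms, the descent through Lemma \ref{lemma:res-iso}, and local and global Tate duality --- is routine given the cited results.
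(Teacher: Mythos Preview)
Your proposal is correct and follows essentially the same approach as the paper: reduce to Besser's refinement of Nekov\'a\v{r}'s Kolyvagin argument, and verify that the Kolyvagin classes built from the generalized Heegner cycles of \cite{bdp:generalized, castella-hsieh:heegner-cycles-p-adic-l-functions} satisfy the same formal properties as those built from classical Heegner cycles. The paper organizes this as checking three specific properties of the classes $P(n)$ --- the $\epsilon_n$-eigenspace property under complex conjugation, the finite local condition at $v\nmid Nn$, and the finite--singular comparison at $\ell\mid n$ --- and then invoking \cite[Theorem~1.2]{besser:finiteness-sha} as a black box, whereas you sketch the two-prime Kolyvagin descent directly; your heuristic description of that descent is somewhat imprecise (the eigenspace splitting under $\tau_c$ plays a more structural role than ``subtract $c\bar z$'' suggests), but you correctly flag exactly this step as the main obstacle and point to the right reference.
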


\begin{proof}
This is just \cite[Theorem 1.2]{besser:finiteness-sha}, where we replace the generalized Heegner cycles to the classical ones in the definition of the classes 
$P(n)$. Note that the proof of \cite[Theorem 1.2]{besser:finiteness-sha} is a formal consequence of the properties enjoyed by the classes $P(n)$ listed in 
\cite[Proposition 3.2]{besser:finiteness-sha} (see \cite[Section 6]{besser:finiteness-sha}), therefore in order to prove our result it is enough to show that these
properties still hold with the new definition of the classes $P(n)$. This will be done in Section \ref{sec:euler-system-generalized-heegner-cycles}: 
they are respectively Proposition \ref{prop:besser-property-complex-conjugation}, \ref{prop:besser-property-finite-condition} and 
\ref{prop:besser-property-finite-singular}.
\end{proof}

\begin{remark}\label{rk:identification-modulo-p-M}
The long exact sequence induced in cohomology by the short exact sequence
$
\begin{tikzcd}[cramped, column sep=small]
0 \ar[r] &  T \ar[r, "\, \cdot p^M"] &  T \ar[r] & A[p^M] \ar[r] & 0
\end{tikzcd}
$
shows that $z_{f, K} = 0$ in $\hone(K, A[p^M])$ if and only if it belongs to $p^M \hone(K, T)$, i.e.~if and only if it is divisible by $p^M$ in $\hone(K, T)$.
It follows that $\calI_p$ can be seen as the order of $z_{f, K}$, i.e.~the biggest integer $M$ such that $p^M \mid z_{f, K}$ in $\hone(K, T)$. 
In particular $\calI_p = 0$ if and only if $z_{f, K}$  is not divisible by $p$ in $\hone(K, T)$.
\end{remark}
\subsection{The Euler system of generalized Heegner cycles}\label{sec:euler-system-generalized-heegner-cycles}

Here we define the Kolyvagin classes $P(n)$ used in the proof of Theorem \ref{th:besser} and we establish their properties.
For any $M \ge 1$, we define the set $S(M)$ of \emph{$M$-admissible primes} as the set of rational primes $\ell$ such that
\begin{itemize}
    \item $\ell \nmid N p d_K$;
    \item $\ell$ is inert in $K$;
    \item $p^M \mid a_\ell$ and $p^M \mid \ell+1$;
    \item $p^{M+1} \nmid \ell + 1 \pm a_\ell$. 
\end{itemize}
Let $n$ be a square-free product of primes $\ell \in S(M)$; recall that we defined $G(n) = \Gal(K[n]/K[1])$ and $\calG_n = \Gal(K[n]/K)$. 
One proves that $G(n) \cong \prod_{\ell} G(\ell)$ and $G(\ell)$ that is cyclic of order $l+1$, 
say generated by an element $\sigma_\ell$. For any $\ell \in S(M)$ we define the operator 
\[
D_\ell = \sum_{i=1}^\ell i\sigma_\ell^i \in \Z[G(\ell)];
\] 
it satisfies the \emph{telescopic identity} $(\sigma_\ell - 1)D_\ell = \ell + 1 - \Tr_\ell$, where $\Tr_\ell = \sum_{i = 0}^\ell \sigma_\ell^i$.  
For $\ell_1 \ne \ell_2$, $D_{\ell_1}$ commutes with $D_{\ell_2}$, therefore we define the ($n$-th) Kolyvagin's derivative operator as
\[
D_n = \prod_{\ell \mid n} D_\ell \in \Z[G(n)].
\]
In the following any $D \in \Z[G(n)]$ and $\Tr_\ell \in G(\ell)$ will be seen as operators on the cohomology groups of $T$ and $A[p^M]$ 
via the standard Galois action. 

The starting point of the Kolyvagin method is the following lemma. The notation $\red_{p^M}$ will denote the map induced in cohomology by $T \surj T/p^MT \cong A[p^M]$.
\begin{lemma}\label{lemma:invariant-kolyvagin-classes}
The class $D_n\bigl(\red_{p^M}(z_{f, n})\bigr) \in \hone(K[n], A[p^M])$ is fixed by the action of $G(n)$. 
\end{lemma}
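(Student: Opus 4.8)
The statement to prove is that the Kolyvagin derivative class $D_n\bigl(\red_{p^M}(z_{f,n})\bigr) \in \hone(K[n], A[p^M])$ is fixed by $G(n) = \Gal(K[n]/K[1])$. The standard argument, which I would follow here, proceeds one prime at a time: since $G(n) \cong \prod_{\ell \mid n} G(\ell)$ with $G(\ell)$ cyclic generated by $\sigma_\ell$, it suffices to show that $(\sigma_\ell - 1)$ annihilates the class $D_n\bigl(\red_{p^M}(z_{f,n})\bigr)$ for every $\ell \mid n$. Fix such an $\ell$ and write $n = m\ell$. Using that $D_{\ell'}$ for $\ell' \ne \ell$ commutes with $\sigma_\ell$ and that the various $D_{\ell'}$ commute with each other, one reduces to analyzing $(\sigma_\ell - 1)D_\ell$ applied to $\red_{p^M}(z_{f,n})$ (with the remaining $D_m = \prod_{\ell' \mid m} D_{\ell'}$ sitting harmlessly on the outside).

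**Key steps.** First I would invoke the \emph{telescopic identity} $(\sigma_\ell - 1)D_\ell = \ell + 1 - \Tr_\ell$ recorded just before the lemma, so that
\[
(\sigma_\ell - 1)D_n\bigl(\red_{p^M}(z_{f,n})\bigr) = D_m\bigl((\ell+1)\red_{p^M}(z_{f,n}) - \Tr_\ell\,\red_{p^M}(z_{f,n})\bigr).
\]
Now I treat the two terms. For the first term, $\ell \in S(M)$ means $p^M \mid \ell + 1$, so $(\ell+1)\red_{p^M}(z_{f,n}) = 0$ already in $\hone(K[n], A[p^M])$ because the coefficient module $A[p^M]$ is killed by $p^M$. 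For the second term, I would use that $\Tr_\ell = \sum_{i=0}^{\ell}\sigma_\ell^i$ on cohomology factors as $\res_{K[n]/K[m]} \circ \cores_{K[n]/K[m]}$ (this is the relation $\res \circ \cores = \Tr$ recalled in the Notations section, applied to the degree-$(\ell+1)$ cyclic extension $K[n]/K[m]$), and the Euler system norm relation (E1) gives $\cores_{K[n]/K[m]}(z_{f,n}) = a_\ell\,z_{f,m}$. Hence $\Tr_\ell(z_{f,n}) = \res_{K[n]/K[m]}(a_\ell\,z_{f,m}) = a_\ell\,\res_{K[n]/K[m]}(z_{f,m})$, and applying $\red_{p^M}$ (which commutes with restriction) shows this is $a_\ell$ times the restriction of a class from $K[m]$. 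Since $\ell \in S(M)$ also imposes $p^M \mid a_\ell$, this term too vanishes in $\hone(K[n], A[p^M])$ because the coefficients are $p^M$-torsion. Therefore $(\sigma_\ell - 1)$ kills $D_n\bigl(\red_{p^M}(z_{f,n})\bigr)$, and running over all $\ell \mid n$ gives the $G(n)$-invariance.

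**Main obstacle.** The conceptual content is entirely the telescopic identity plus the two divisibility conditions $p^M \mid \ell+1$ and $p^M \mid a_\ell$ built into the definition of $S(M)$; the only point requiring a little care is the bookkeeping that $\red_{p^M}$ commutes with the Galois operators $\sigma_\ell$, $D_\ell$, $\Tr_\ell$ and with corestriction, so that the Euler system relation (E1) — stated for the classes $z_{f,n}$ in $\hone(K[n], T)$ — can be pushed down to $A[p^M]$-coefficients. This is routine functoriality of the long exact sequence attached to $0 \to T \xrightarrow{p^M} T \to A[p^M] \to 0$, but it is the step that actually uses that $\red_{p^M}$ is induced by a map of $G_K$-modules. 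I do not anticipate any genuine difficulty beyond that.
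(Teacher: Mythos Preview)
Your proposal is correct and follows essentially the same approach as the paper: reduce to showing $(\sigma_\ell-1)$ kills the class for each $\ell\mid n$, apply the telescopic identity $(\sigma_\ell-1)D_\ell=\ell+1-\Tr_\ell$, rewrite $\Tr_\ell$ as $\res\circ\cores$ and invoke (E1), then use $p^M\mid \ell+1$ and $p^M\mid a_\ell$ to kill both terms. The only cosmetic difference is that the paper performs the computation in $\hone(K[n],T)$ and shows the result lies in $p^M\hone(K[n],T)$ before reducing, whereas you push $\red_{p^M}$ through first and work directly with $A[p^M]$-coefficients; as you note, the required commutation of $\red_{p^M}$ with the Galois operators and with (co)restriction is immediate from functoriality.
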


\begin{proof}
Let $n = m \cdot \ell$, then
\begin{align*}
(\sigma_\ell -1)D_n(z_{f, n}) &= (\sigma_\ell - 1)D_\ell D_m(z_{f, n})\\
&=(l+1)D_m(z_{f, n}) - \Tr_\ell D_m(z_{f, n})\\
&=(l+1)D_m(z_{f, n}) - D_m \Tr_\ell(z_{f, n})\\
&=(l+1)D_m(z_{f, n}) - a_\ell \bigl(\res_{K[n]/K[m]} D_m(z_{f, m})\bigr)
\equiv 0 \bmod p^M.
\end{align*}

Indeed:
\begin{itemize}
\item $\Tr_\ell(z_{f, n}) =  \res_{K[n]/K[m]} \circ \cores_{K[n]/K[m]}(z_{f, n}) = a_\ell\res_{K[n]/K[m]}(z_{f, m})$ by (E1);
\item $\ell +1, \, a_\ell \equiv 0 \mod p^{M}$ as $\ell \in S(M)$.
\end{itemize}
Thus, by Remark \ref{rk:identification-modulo-p-M}, the class 
$D_n\bigl(\red_{p^M}(z_{f, n})\bigr)$ is fixed by the action $G(\ell)$ for any $\ell \mid n$ and hence by the action of $G(n)$. 
\end{proof}

Therefore by Lemma \ref{lemma:res-iso} there is a unique class $\dkol{n} \in \hone(K[1], A[p^M])$ such that
\[
\res_{K[n]/K[1]} \bigl(\dkol{n}\bigr) = D_n\bigl(\red_{p^M} (z_{f, n})\bigr);
\]
set
\[
P(n) = \cores_{K[1]/K} \bigl(\dkol{n}\bigr) \in \hone(K, A[p^M]).
\]
\begin{remark}
Note that $D_1 = \id$, hence $P(1) = \red_{p^M} (z_{f, K})$.
\end{remark}

We come now to the properties of the classes $P(n)$. For any $n$ square-free product of primes of $S(M)$ we define 
$\epsilon_n = (-1)^{\omega(n)}w_f$, where $\omega(n)$ is the number of prime factors of $n$.
\begin{proposition}\label{prop:besser-property-complex-conjugation}
The class $P(n)$ belongs to the $\epsilon_n$-eigenspace of the complex conjugation $\tau_c$ 
acting on $\hone(K, A[p^{M}])$.
\end{proposition}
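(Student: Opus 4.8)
The plan is to compute the action of $\tau_c$ directly on the class $D_n\bigl(\red_{p^M}(z_{f,n})\bigr)\in\hone(K[n],A[p^M])$, then transport the resulting identity down to $\dkol{n}\in\hone(K[1],A[p^M])$ using that $\res_{K[n]/K[1]}$ is injective (Lemma \ref{lemma:res-iso}), and finally to $P(n)=\cores_{K[1]/K}\bigl(\dkol{n}\bigr)$ using the behaviour of corestriction under the Galois action. Here $\tau_c$ denotes a fixed lift of complex conjugation, an element of order two in each $\Gal(K[n]/\Q)$ compatible with restriction; recall that $K[n]/\Q$ is generalized dihedral, so $\tau_c$ acts by inversion on the abelian group $\calG_n=\Gal(K[n]/K)$ and in particular $\tau_c\sigma_\ell\tau_c^{-1}=\sigma_\ell^{-1}$ for every $\ell\mid n$.

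The crux is a computation with the Kolyvagin derivative operators. Applying $\tau_c$ to $D_\ell=\sum_{i=1}^\ell i\sigma_\ell^i$ replaces $\sigma_\ell$ by $\sigma_\ell^{-1}=\sigma_\ell^{\,\ell+1-i}$, as $\sigma_\ell$ has order $\ell+1$; reindexing by $j=\ell+1-i$ and using $\sum_{j=1}^\ell\sigma_\ell^j=\Tr_\ell-1$ gives the identity
\[
\tau_c\,D_\ell\,\tau_c^{-1}=(\ell+1)(\Tr_\ell-1)-D_\ell\qquad\text{in }\Z[G(\ell)].
\]
Because $\ell\in S(M)$ forces $p^M\mid\ell+1$, the first summand annihilates every $p^M$-torsion module, so as operators on $\hone(K[n],A[p^M])$ one has $\tau_c D_\ell\tau_c^{-1}\equiv-D_\ell\pmod{p^M}$; since conjugation is multiplicative on $\Z[\calG_n]$ and $D_n=\prod_{\ell\mid n}D_\ell$, this yields $\tau_c D_n\tau_c^{-1}\equiv(-1)^{\omega(n)}D_n\pmod{p^M}$.

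Combining this with property (E3), which gives $\tau_c\cdot z_{f,n}=w_f(\sigma_{\bar{\Nfrak}}\cdot z_{f,n})$ and hence $\tau_c\cdot\red_{p^M}(z_{f,n})=w_f\,\sigma_{\bar{\Nfrak}}\cdot\red_{p^M}(z_{f,n})$ by $G_\Q$-equivariance of $\red_{p^M}$, I obtain
\begin{align*}
\tau_c\cdot\bigl(D_n\red_{p^M}(z_{f,n})\bigr)&=(-1)^{\omega(n)}D_n\cdot\bigl(w_f\,\sigma_{\bar{\Nfrak}}\cdot\red_{p^M}(z_{f,n})\bigr)\\
&=\epsilon_n\,\sigma_{\bar{\Nfrak}}\cdot\bigl(D_n\red_{p^M}(z_{f,n})\bigr),
\end{align*}
where I used that any lift of $\sigma_{\bar{\Nfrak}}\in\calG_1$ to $\calG_n$ commutes with $D_n\in\Z[G(n)]$ because $\calG_n$ is abelian. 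As $\res_{K[n]/K[1]}$ is injective and $G_\Q$-equivariant (Lemma \ref{lemma:res-iso}), this descends to $\tau_c\cdot\dkol{n}=\epsilon_n\,\sigma_{\bar{\Nfrak}}\cdot\dkol{n}$ in $\hone(K[1],A[p^M])$. Applying $\cores_{K[1]/K}$, which is $G_\Q$-equivariant and satisfies $\cores_{K[1]/K}(\sigma\cdot y)=\cores_{K[1]/K}(y)$ for $\sigma\in\calG_1$ — since $\res_{K[1]/K}\circ\cores_{K[1]/K}=\Tr_{K[1]/K}$ is $\calG_1$-invariant and $\res_{K[1]/K}$ is injective by Lemma \ref{lemma:res-iso} — finally gives $\tau_c\cdot P(n)=\epsilon_n\,P(n)$, which is the claim.

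I expect the only genuine subtlety to be the bookkeeping of the various Galois actions and their compatibility under $\res$ and $\cores$ (in particular invoking the abelianness of $\calG_n$ to slide $\sigma_{\bar{\Nfrak}}$ past $D_n$), together with the single arithmetic input $\tau_c D_\ell\tau_c^{-1}\equiv-D_\ell\pmod{p^M}$, which is exactly where the generalized-dihedral structure of $K[n]/\Q$ meets the divisibility $p^M\mid\ell+1$ from the definition of $S(M)$.
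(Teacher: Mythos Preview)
Your argument is correct and follows the same outline as the paper's proof: compute how $\tau_c$ conjugates $D_n$, apply (E3), descend through the restriction isomorphism of Lemma~\ref{lemma:res-iso}, and finish with corestriction. The one noteworthy difference is the group-ring identity you use. The paper quotes Gross's relation $\tau_c D_\ell=-\sigma_\ell D_\ell\tau_c+k\,\tau_c\Tr_\ell$, which leaves an extra factor $\prod_{\ell\mid n}\sigma_\ell$ and a $\Tr_\ell$-term; it then disposes of the former via Lemma~\ref{lemma:invariant-kolyvagin-classes} and of the latter via (E1) together with $p^M\mid a_\ell$. Your identity $\tau_c D_\ell\tau_c^{-1}=(\ell+1)(\Tr_\ell-1)-D_\ell$ is an equivalent rewriting, but it kills the error term using only $p^M\mid \ell+1$, so neither the invariance lemma nor the divisibility of $a_\ell$ is needed at this step --- a small but genuine simplification.
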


\begin{proof}
The proof follows essentially from the computations of \cite[Proposition 5.4(1)]{gross:kolyvagin}, replacing (E1) and (E3) to the analogous properties
for the Heegner points on an elliptic curve. We just sketch the argument for the convenience of the reader. 
Gross shows that we have the relation $\tau_c D_\ell = -\sigma_\ell D_\ell \tau_c + k \, \tau_c \Tr_\ell$, for some $k \in \Z$: 
it follows that
\begin{align*}
\tau_c \cdot (D_n z_{f, n}) &= \tau_c \bigl(\prod_{\ell \mid n} D_\ell \bigr) \cdot z_{f, n} \equiv 
                                    (-1)^{\omega(n)} \bigl(\prod_{\ell \mid n} \sigma_{\ell}\bigr) D_n\cdot \tau_c \cdot z_{f, n} \\
                                    &= (-1)^{\omega(n)} w_f \sigma_{\bar{\Nfrak}} \bigl(\prod_{\ell \mid n} \sigma_{\ell}\bigr) (D_n z_{f, n}) \equiv 
                                    \epsilon_n \sigma_{\bar{\Nfrak}} (D_n z_{f, n}) \bmod p^M,
\end{align*}
since $\Tr_\ell z_{f, n} = a_\ell \res_{K[n]/K[n/\ell]}(z_{f, m}) \equiv 0 \bmod p^M$ for $\ell \in S(M)$ and by Lemma \ref{lemma:invariant-kolyvagin-classes}.
Therefore, since $\res_{K[n]/K[1]}$ is an isomorphism by Lemma \ref{lemma:res-iso}, $\tau_c \dkol{n} = \epsilon_n \sigma_{\bar{\Nfrak}} \dkol{n}$.
Thus
\begin{align*}
\res_{K[1]/K} \bigl(\tau_c \cdot P(n)\bigr) &= \tau_c \cdot \bigl( \res_{K[1]/K} \circ \cores_{K[1]/K} \dkol{n} \bigr) =  \tau_c \cdot \bigl(\Tr_{K[1]/K} \dkol{n} \bigr) \\
    &= \Tr_{K[1]/K} \bigl(\tau_c \cdot \dkol{n}\bigr) = \Tr_{K[1]/K} \bigl( \epsilon_n \sigma_{\bar{\Nfrak}} \dkol{n} \bigr) =
    \epsilon_n (\sigma_{\bar{\Nfrak}} \Tr_{K[1]/K})  \dkol{n}\\ &=\epsilon_n \Tr_{K[1]/K} \dkol{n} = \epsilon_n \res_{K[1]/K} P(n) = 
    \res_{K[1]/K} \bigl( \epsilon_n P(n) \bigr), 
\end{align*}
since $\tau_c$ commutes with $\Tr_{K[1]/K}$, again since $K[1]$ is generalized dihedral over $\Q$, and 
since $\sigma_{\bar{\Nfrak}} \cdot \Tr_{K[1]/K} =  \Tr_{K[1]/K}$ as $\sigma_{\bar{\Nfrak}} \in \calG_1$.
Thus, as $\res_{K[1]/K}$ is an isomorphism by Lemma \ref{lemma:res-iso}, it follows that $\tau_c P(n) = \epsilon_n P(n)$.
\end{proof}

\begin{proposition}\label{prop:besser-property-finite-condition}
For any $v \nmid N n$, $\loc_v P(n) \in \honef(K_v, A[p^M])$.
\end{proposition}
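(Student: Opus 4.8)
The plan is to reduce the statement to the single fact that the $p$-adic Abel--Jacobi class of a generalized Heegner cycle lies in the Bloch--Kato Selmer group everywhere locally, and then to transport this property along reduction modulo $p^M$, Kolyvagin's derivative operator, restriction to $K[1]$ and corestriction to $K$, checking compatibility with the finite local conditions at each step. First I would record that $z_{f, n} \in \imajt(K[n]) \subseteq \honef(K[n], T)$ (by the integrality of $\AJ^\et_n$ and the fact that its image lies in the Bloch--Kato Selmer group, recalled in Section \ref{sec:shafarevich-tate-mod-forms}); hence $\loc_w(z_{f, n}) \in \honef(K[n]_w, T)$ for every place $w$ of $K[n]$. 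Since the finite local condition on $A[p^M]$ is by Definition \ref{sec:local-conditions} the image of that on $T$ under $\red_{p^M}$, this gives $\red_{p^M}(z_{f, n}) \in \honef(K[n], A[p^M])$. Moreover $\honef(K[n], A[p^M])$ is stable under the natural action of $\Gal(K[n]/K[1])$: an element $\sigma$ of this group permutes the places of $K[n]$ and carries $\honef(K[n]_w, A[p^M])$ onto $\honef(K[n]_{\sigma w}, A[p^M])$. As $D_n$ is a $\Z$-linear combination of such $\sigma$, we conclude $D_n\bigl(\red_{p^M}(z_{f, n})\bigr) \in \honef(K[n], A[p^M])$; in particular all of its localizations lie in the finite conditions.

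The core is the descent to $K[1]$. Fix a place $v \nmid Nn$ of $K$, a place $w \mid v$ of $K[1]$, and a place $u \mid w$ of $K[n]$. Since the ring class field $K[n]$ has conductor $n$, the extension $K[n]/K[1]$ is unramified at $v$, so $K[n]_u/K[1]_w$ is an unramified extension of local fields. I would use the fact that for such an extension $L'/L$ a class $c \in \hone(L, A[p^M])$ lies in $\honef(L, A[p^M])$ as soon as $\res_{L'/L}(c) \in \honef(L', A[p^M])$: when $v \nmid p$ this is elementary, the finite condition being the unramified one while $L$ and $L'$ share the same inertia subgroup, through which restriction factors; when $v \mid p$ one invokes that the Bloch--Kato local condition is insensitive to unramified base change, together with its compatibility with the passage from $V$ to $T$ and to $A[p^M]$. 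Applying this with $c = \loc_w(\dkol{n})$, and using the defining relation $\res_{K[n]/K[1]}(\dkol{n}) = D_n(\red_{p^M}(z_{f, n}))$ to get $\res_{K[n]_u/K[1]_w}\bigl(\loc_w(\dkol{n})\bigr) = \loc_u\bigl(D_n(\red_{p^M}(z_{f, n}))\bigr) \in \honef(K[n]_u, A[p^M])$, I obtain $\loc_w(\dkol{n}) \in \honef(K[1]_w, A[p^M])$.

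It remains to corestrict. From $P(n) = \cores_{K[1]/K}(\dkol{n})$ one has $\loc_v(P(n)) = \sum_{w \mid v} \cores_{K[1]_w/K_v}\bigl(\loc_w(\dkol{n})\bigr)$, the sum over the places of $K[1]$ above $v$. Corestriction is compatible with the finite local conditions; moreover, for $v \nmid Np$ the representation $V$ is unramified at $v$, so $\honef(K_v, A[p^M]) = \honeur(K_v, A[p^M])$, while for $v \mid p$ it is the crystalline condition. For $v \nmid Nn$ every $w \mid v$ also satisfies $w \nmid Nn$, the previous paragraph applies, and therefore $\loc_v(P(n)) \in \honef(K_v, A[p^M])$, which is the assertion.

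I expect the only genuine difficulty to be the $v \mid p$ half of the descent step, where one must know that the Bloch--Kato finite condition at $p$ is unaffected by an unramified base change and is compatible with reduction modulo $p^M$; away from $p$ the argument is a purely formal (if somewhat lengthy) bookkeeping with unramified classes under restriction and corestriction.
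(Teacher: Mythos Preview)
Your outline is correct and matches the paper's proof almost exactly: start from $z_{f,n}\in\honef(K[n],T)$, reduce modulo $p^M$, apply $D_n$ (which is Galois-linear and hence preserves the Selmer condition), descend to $K[1]$ using that $K[n]/K[1]$ is unramified at $v\nmid n$, and then corestrict to $K$. For $v\nmid p$ your ``same inertia'' argument is exactly what the paper does, written in slightly more explicit form there via $K_v^\ur=K[1]_v^\ur=K[n]_v^\ur$.

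The one point where your write-up is too loose is precisely the one you flag. For $v\mid p$ you assert that ``the Bloch--Kato local condition is insensitive to unramified base change''; what is actually needed is the injectivity of
\[
\res_{K[n]_v/K[1]_v}\colon \frac{\hone(K[1]_v,A[p^M])}{\honef(K[1]_v,A[p^M])}\longrightarrow \frac{\hone(K[n]_v,A[p^M])}{\honef(K[n]_v,A[p^M])},
\]
and this is \emph{not} a formal consequence of unramifiedness alone. The paper supplies this input by quoting \cite[Lemma 7.5]{castella-hsieh:heegner-cycles-p-adic-l-functions}, which uses the $\p$-ordinarity of $f$ (so that the local condition at $p$ is of Greenberg type). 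Once you plug in that lemma, your argument is complete and identical to the paper's.
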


\begin{proof}
Let's first consider the case $v \nmid p$. Since $T$ is unramified at $v \nmid Np$ (see Example \ref{ex:unramified-finite}), then 
\[
\honef(K[n], A[p^M]) = \honeur(K[n], A[p^M]) = \ker\bigl( \hone(K[n]_v, A[p^M]) \xlongrightarrow{\res_{K[n]^\ur/K[n]}} \hone(K[n]_v^\ur, A[p^M])\bigr) 
\]
where $K[n]_v$ is the completion of $K[n]$ at a prime $v[n]$ over $v$. 
Note that since $K[n]/K$ is unramified at $v \nmid n$, 
then $K_v^\ur = K[1]_v^\ur= K[n]_v^\ur$. Thus $\hone(K[n]_v^\ur, A[p^M]) = \hone(K[1]_v^\ur, A[p^M]) = \hone(K_v^\ur, A[p^M])$ and hence
\begin{align*}
\res_{K[1]_v^\ur/K[1]_v} \bigl( \loc_v \dkol{n} \bigr)   &= \res_{K[n]_v^\ur/K[n]_v} \circ \loc_v \bigl(\res_{K[n]/K[1]} \dkol{n}\bigr) \\
                                                    &= \res_{K[n]_v^\ur/K[n]_v} \circ \loc_v \bigl( D_n \red_{p^M}z_{f, n} \bigr)\\
                                                    &= D_n \Bigl(\res_{K[n]_v^\ur/K[n]_v} \circ \loc_v \bigl( \red_{p^M}z_{f, n} \bigr)\Bigr) = 0,
\end{align*}
since $z_{f, n} \in \imajt(K, T) \subseteq \honef(K[n], T)$. Now $\loc_v P(n) \in \honef(K_v, A[p^M])$ since
\begin{align*}
\res_{K_v^\ur/K_v} \bigl(\loc_v P(n) \bigr) &= \res_{K[1]_v^\ur/K[1]_v} \circ \loc_v (\res_{K[1]/K} \circ \cores_{K[1]/K} \dkol{n}) \\
&=\sum_{\sigma \in \calG_1} \sigma \cdot \bigl(\res_{K[1]_v^\ur/K[1]_v} \circ \loc_v \dkol{n}\bigr) = 0.
\end{align*}
Now we assume $v \mid p$. The restriction map 
\[
\res_{K[n]_v/K[1]_v} \colon \frac{\hone(K[1]_v, A[p^M])}{\honef(K[1]_v, A[p^M])} \longrightarrow \frac{\hone(K[n]_v, A[p^M])}{\honef(K[n]_v, A[p^M])}
\]
is injective by \cite[Lemma 7.5]{castella-hsieh:heegner-cycles-p-adic-l-functions}, therefore  $\loc_v\bigl( \dkol{n} \bigr) \in \honef(K[n]_v, A[p^M])$ as  
\[
\res_{K[n]_v/K[1]_v} \loc_v\bigl(\dkol{n}\bigr) = \loc_v \bigl( D_n (\red_{p^M} z_{f, n})\bigr) \in \honef(K[n]_v, A[p^M])
\]
again because $z_{f, n} \in \honef(K[n], A[p^M])$. Thus $\loc_v P(n) \in \honef(K, A[p^M])$.
\end{proof}

We now introduce the finite-to-singular isomorphism $\phi^{\fs}_\ell \colon \honef(K_\lambda, A[p^M]) \to \hones(K_\lambda, A[p^M])$ 
coming from the composition of the two isomorphisms
\begin{align*}
\alpha_\ell \colon  &\honef(K_\lambda, A[p^M]) \cong \hone(K^\ur_\lambda/K_\lambda, A[p^M]) \iso A[p^M]\\
\beta_\ell \colon   &\hones(K_\lambda, A[p^M]) \cong \hone(K^\ur_\lambda, A[p^M]) \cong \hone(K^t_\lambda/K^\ur_\lambda, A[p^M])  \iso A[p^M]
\end{align*}
given by evaluation of cocycles respectively at $\Frob_\ell$ and at $\tau_\ell$, a topological generator of
\[
\Gal(K_\lambda^t/K_\lambda^\ur) \cong \hat{\Z}' = \prod_{q \ne \ell} \Z_q \cong \hspace{-5pt} \projlim_{n \ge 1, \, \ell \nmid n} \hspace{-5pt} \Z/n\Z.
\]
\begin{remark}\label{remark:finite-singular-iso-proof}
Just for this remark, in contrast to the convention we made at the beginning of Section \ref{sec:vanishing-of-sha}, let $\Frob_\ell$ denote the Frobenius automorphism of $G_\ell$ and $\Frob_\lambda$ the Frobenius of
$G_\lambda = \Gal(\bar{\Q}_\ell/K_\lambda)$. In particular $\Frob_\lambda = \Frob_\ell^2$. 
In order to see that $\alpha_\ell$ is an isomorphism it is enough to apply \cite[B.2.8]{rubin:euler-systems} and observe that 
$\Frob_\lambda$ acts trivially on $A[p^M]$: since $\ell \in S(M)$, $\charpol(\Frob_\ell \mid T) \equiv X^2 - 1 \bmod p^M$, hence
$\Frob_\lambda = \Frob_\ell^2 = \id$, as automorphisms of $A[p^M]$. 

Note then that \cite[B.2.8]{rubin:euler-systems} still holds for $G = \Gal(K^t_\lambda/K^\ur_\lambda)$: 
the result of \cite[§ XIII.1]{serre:local-fields} used in its proof holds indeed also for $\hat{\Z}'$, since it follows from a computation for cyclic groups 
and a limit argument. Therefore similarly as above in order to see that $\beta_\ell$ is an isomorphism it is enough to observe that a lift of $\tau_\ell$ to
$I_\lambda = \Gal(\bar{\Q}_\ell/K^\ur_\lambda)$ acts trivially on $A[p^M]$, but $T$ is unramified at $\ell \in S(M)$.
\end{remark}

In the following formulas $[\ast]_s$, for $\ast \in \hone(K_\lambda, A[p^M])$, denotes the image of $\ast$ in the singular quotient $\hones(K_\lambda, A[p^M])$. 
\begin{proposition}\label{prop:besser-property-finite-singular}
Let $n = m \cdot \ell$. Then there is a $p$-adic unit $u_{\ell, n}$ such that 
\[
\bigl[\loc_{\ell} P(n)\bigr]_s = \bar{u}_{\ell, n} \phi^{\fs}_\ell \bigl(\loc_\ell P(m) \bigr),
\] 
where $\bar{u}_{\ell, n}$ is the reduction of $u_{\ell, n}$ to $(\O/p^{M}\O)^\times$. In particular 
$\loc_\ell P(m) \ne 0$ if and only if $\bigl[\loc_\ell P(n)\bigr]_s \ne 0$.
\end{proposition}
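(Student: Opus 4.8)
The plan is to reduce the statement to an explicit local cocycle computation at the unique prime $\lambda$ of $K$ above $\ell$, following the classical Kolyvagin argument for Heegner points (\cite[Proposition 5.4]{gross:kolyvagin}) and its refinement in \cite[Section 6]{besser:finiteness-sha}; the only ingredients that are special to the present setting are the Euler system relations (E1) and (E2) of Castella--Hsieh and the fact, recorded in Remark \ref{remark:finite-singular-iso-proof}, that $\Frob_\lambda$ acts trivially on $A[p^M]$ for $\ell \in S(M)$, and everything else is formal. Note first that $\loc_\ell P(m) \in \honef(K_\lambda, A[p^M])$ by Proposition \ref{prop:besser-property-finite-condition} applied with $m$ in place of $n$ (legitimate, since $\ell \nmid Nm$), so that $\phi^{\fs}_\ell\bigl(\loc_\ell P(m)\bigr)$ is defined; on the other hand $\ell \mid n$, so Proposition \ref{prop:besser-property-finite-condition} says nothing about $\loc_\ell P(n)$, which need not be finite --- and this is exactly the content of the proposition.

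First I would descend the problem from $K$ through $K[1]$ to $K[n]$. Since $\loc_\ell$ commutes both with $\cores_{K[1]/K}$ --- a sum of corestrictions along the \emph{unramified} local extensions $K[1]_{\lambda_1}/K_\lambda$, which are compatible with $\alpha_\ell$ and $\beta_\ell$ --- and with $\res_{K[n]/K[1]}$, it suffices to compare the singular part of $\loc_\ell(\dkol{n})$ with $\loc_\ell(\dkol{m})$ over $K[1]_{\lambda_1}$ (over which $\Frob$ still acts trivially on $A[p^M]$, so that the finite-singular maps are available) and then corestrict. Now $\res_{K[n]/K[1]}(\dkol{n}) = D_n\bigl(\red_{p^M}(z_{f,n})\bigr)$ by construction, and by (E2) the localization $\loc_\ell \red_{p^M}(z_{f,n})$ is the restriction to $K[n]_{\lambda_n}$ of the \emph{unramified} class $\Frob_\ell \cdot \loc_\ell \red_{p^M}(z_{f,m})$ over $K[m]_{\lambda_m}$, where $\lambda$ is totally and tamely ramified of degree $\ell+1$ in $K[n]/K[m]$ with $\Gal(K[n]_{\lambda_n}/K[m]_{\lambda_m}) = \langle \sigma_\ell \rangle$. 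Being a restriction, $\loc_\ell \red_{p^M}(z_{f,n})$ is fixed by $\langle\sigma_\ell\rangle$, so $D_\ell$ acts on it as $\sum_{i=1}^{\ell} i = \ell(\ell+1)/2 \equiv 0 \bmod p^M$ ($p$ being odd); since $D_n = D_\ell D_m$ and $D_m$ only permutes the primes of $K[n]$ above $\ell$, it follows that $\loc_\ell(\dkol{n})$ restricts to zero in $\hone(K[n]_{\lambda_n}, A[p^M])$, hence is inflated from $\Gal(K[n]_{\lambda_n}/K[1]_{\lambda_1})$; as $\langle\sigma_\ell\rangle$ is the inertia subgroup of that extension, the class is ramified.

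To make the inflated class explicit I would lift to characteristic zero. By the telescopic identity $(\sigma_\ell-1)D_\ell = \ell+1-\Tr_\ell$ and by (E1), the class $(\ell+1)z_{f,n} - \Tr_\ell(z_{f,n}) = (\ell+1)z_{f,n} - a_\ell\,\res_{K[n]/K[m]}(z_{f,m})$ is divisible by $p^M$ in $\hone(K[n], T)$; localizing at $\ell$, dividing by $p^M$, reducing mod $p^M$, and using (E2) together with the triviality of $\Frob_\lambda$ on $A[p^M]$ (Remark \ref{remark:finite-singular-iso-proof}), one computes the coboundary element $a \in A[p^M]$ representing the ramified part of $\loc_\ell(\dkol{n})$ as an explicit combination involving $(\ell+1)/p^M$, $a_\ell/p^M$ and the $\Frob_\ell$-action on $A[p^M]$, applied to $\alpha_\ell\bigl(\loc_\ell \dkol{m}\bigr)$. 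Evaluating the resulting cocycle on the tame generator $\tau_\ell$ then yields that the singular part of $\loc_\ell(\dkol{n})$ equals $u_{\ell,n}\,\phi^{\fs}_\ell\bigl(\loc_\ell \dkol{m}\bigr)$, with $u_{\ell,n}$ the scalar by which that combination acts on the relevant $\Frob_\ell$-eigenspace; reassembling via $D_n = D_\ell D_m$ and corestricting to $K$ gives the stated equality. The main obstacle will be carrying out this explicit identification carefully --- tracking everything along the tower $K_\lambda \subseteq K[1]_{\lambda_1} \subseteq K[m]_{\lambda_m} \subseteq K[n]_{\lambda_n}$, matching the descended class with the definitions of $\alpha_\ell$, $\beta_\ell$, and verifying that the $M$-admissibility condition $p^{M+1}\nmid\ell+1\pm a_\ell$ forces $u_{\ell,n}$ to lie in $(\O/p^M\O)^\times$ --- but this is precisely the computation performed for classical Heegner cycles in \cite[Section 6]{besser:finiteness-sha}, and it transfers verbatim once (E1), (E2) and Remark \ref{remark:finite-singular-iso-proof} are in hand. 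The final assertion is then immediate, $\phi^{\fs}_\ell$ being an isomorphism and $\bar u_{\ell,n}$ a unit.
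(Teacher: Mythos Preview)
Your approach is essentially the same as the paper's: both reduce to a local cocycle computation at $\lambda$ using the tame Galois structure, the Euler-system relations (E1)--(E2), and the $M$-admissibility condition, following Nekov\'a\v{r}'s argument in \cite[\S9--10]{nekovar:kuga-sato}; the paper packages this as an explicit formula (Lemma~\ref{lemma:finite-singular}) before deducing the proposition, whereas you sketch the computation directly. Two minor points: your reference to \cite[Section 6]{besser:finiteness-sha} is off --- that section applies the Kolyvagin machine once the properties are known, whereas the finite-singular computation itself is in Besser's Proposition 3.2 and ultimately in Nekov\'a\v{r} --- and your phrase ``the relevant $\Frob_\ell$-eigenspace'' hides the step where one identifies the local Frobenius $\phi$ with complex conjugation on $\hone(K_\lambda, A[p^M])$ and invokes Proposition~\ref{prop:besser-property-complex-conjugation} to replace its action by the scalar $\epsilon_n$, which is exactly what turns the operator-valued relation into a scalar one and makes the $M$-admissibility condition $p^{M+1}\nmid \ell+1\pm a_\ell$ relevant.
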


\begin{proof}
The result follows from the formula of the following lemma, since $\ell$ is $M$-admissible and hence the two coefficients there are $p$-adic units.
\end{proof}

\begin{lemma} \label{lemma:finite-singular}
If $n = m \cdot \ell$, then
\[
\biggl(  \frac{\ell+1}{p^M} + \frac{(-1)^{k/2 -1} \epsilon_n a_\ell}{p^M} \biggr) \bigl[ \loc_\ell  P(n) \bigr]_s =
\biggl(-\frac{\ell+1}{p^M}\epsilon_n - \frac{a_\ell}{p^M}\biggr) \phi^{\fs}_\ell\bigl(\loc_\ell P(m)\bigr).
\]
\end{lemma}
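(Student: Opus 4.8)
\emph{Proof strategy.} The plan is to reduce the identity to a purely local statement at $\lambda$, the unique prime of $K$ above $\ell$, and then to run the classical Kolyvagin finite-to-singular computation there --- exactly as in \cite[Proposition 5.4]{gross:kolyvagin}, \cite[Section 6]{besser:finiteness-sha} and \cite[Section 8]{nekovar:kuga-sato} --- feeding in the Euler system relations (E1)--(E3) for the generalized Heegner classes $z_{f, n}$ in place of the classical ones. The only inputs that genuinely change are the norm relation (E2) and the shape of $\charpol(\Frob_\ell \mid T)$ recorded in Remark \ref{rk:charpol-frobenius-ell}; everything else is formal.

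First I would fix compatible primes $\lambda_1 \mid \lambda$ of $K[1]$, $\lambda_m \mid \lambda_1$ of $K[m]$ and $\lambda_n \mid \lambda_m$ of $K[n]$, and write $L_0 = K_\lambda$, $L_m = K[m]_{\lambda_m}$, $L_n = K[n]_{\lambda_n}$. Since $\ell \nmid m$ the extension $L_m/L_0$ is unramified, while $L_n/L_m$ is cyclic, tamely and totally ramified of degree $\ell+1$, with Galois group the image of $G(\ell) = \langle \sigma_\ell \rangle$. Using the compatibility of $\loc_\ell$ with $\cores_{K[1]/K}$ and with $\res_{K[n]/K[1]}$, together with the defining property $\res_{K[n]/K[1]} \dkol{n} = D_n\bigl(\red_{p^M} z_{f, n}\bigr)$, one rewrites $\loc_\ell P(n)$ in terms of $D_n\bigl(\red_{p^M} \loc_{\lambda_n} z_{f, n}\bigr) \in \hone(L_n, A[p^M])$ corestricted down to $L_0$, and similarly $\loc_\ell P(m)$ in terms of $D_m\bigl(\red_{p^M} \loc_{\lambda_m} z_{f, m}\bigr)$; as in the proofs of Propositions \ref{prop:besser-property-complex-conjugation}--\ref{prop:besser-property-finite-condition}, passing between the finitely many primes above $\lambda$ in $K[1]$ only contributes a $p$-adic unit and the sign $\epsilon_n$ (here one uses $p \nmid h_K$ and Proposition \ref{prop:besser-property-complex-conjugation}).

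The heart of the matter is the local computation over $L_n$. One decomposes $\hone(L_n, A[p^M])$ via the inflation--restriction sequence for the totally ramified extension $L_n/L_m$: the inflated part is the image of $\hone(L_m, A[p^M])$ --- the ``finite'' part, on which $\alpha_\ell$ is evaluation at Frobenius --- while the complementary ``singular'' part is $\Hom\bigl(\Gal(L_n/L_m), A[p^M]\bigr)$, on which $\beta_\ell$ is evaluation at the tame generator $\tau_\ell$; here, as in Remark \ref{remark:finite-singular-iso-proof}, the residue Frobenius at $\lambda$ acts trivially on $A[p^M]$ because $\charpol(\Frob_\ell \mid T) \equiv X^2 - 1 \bmod p^M$ for $\ell \in S(M)$ (Remark \ref{rk:charpol-frobenius-ell}). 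By (E2), $\red_{p^M}\loc_{\lambda_n} z_{f, n}$ is the restriction from $L_m$ of a Frobenius-twist of $\red_{p^M}\loc_{\lambda_m} z_{f, m}$; applying $D_n = D_\ell D_m$ --- in which the factor $D_\ell$ is what produces the ramified component at $\ell$ --- and using the telescopic identity $(\sigma_\ell - 1)D_\ell = \ell+1 - \Tr_\ell$ together with $\Tr_\ell \equiv 0 \bmod p^M$ (valid since $p^M \mid \ell+1$ and $p^M \mid a_\ell$ by $M$-admissibility), one reads off both the unramified and the ramified components of $D_\ell\bigl(\res_{L_m/L_n}(\,\cdot\,)\bigr)$ explicitly in terms of the Frobenius-value of the input. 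Bookkeeping the constants: the ramified (singular) component acquires the coefficient $\ell+1 + (-1)^{k/2-1}\epsilon_n a_\ell$, while $\phi^\fs_\ell\bigl(\loc_\ell P(m)\bigr)$ acquires $-(\ell+1)\epsilon_n - a_\ell$. The sign $(-1)^{k/2-1}$ arises because $\ell \equiv -1 \bmod p^M$, so the trace $a_\ell/\ell^{k/2-1}$ of $\Frob_\ell$ on $T$ governing the singular contribution differs from the Hecke eigenvalue $a_\ell$ entering the finite contribution via (E1) by the factor $\ell^{k/2-1} \equiv (-1)^{k/2-1}$; the factor $\epsilon_n$ is the complex-conjugation eigenvalue of $P(n)$ from Proposition \ref{prop:besser-property-complex-conjugation}. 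Since $p^M$ divides both $\ell+1$ and $a_\ell$, dividing through by $p^M$ yields the asserted equality in $\hones(K_\lambda, A[p^M])$.

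\emph{Main obstacle.} The delicate point is precisely this last bookkeeping: showing that $D_\ell$, applied to the norm-related class coming from (E2) over the totally ramified extension $L_n/L_m$, produces exactly the coefficients $\frac{\ell+1}{p^M} + \frac{(-1)^{k/2-1}\epsilon_n a_\ell}{p^M}$ and $-\frac{\ell+1}{p^M}\epsilon_n - \frac{a_\ell}{p^M}$ with these signs. In particular one must be careful that (E2) involves the arithmetic Frobenius $\Frob_\ell$ (and not its inverse or $\Frob_\lambda$) and that it is the Tate twist by $k/2$ built into $T$ which is responsible for the discrepancy $\ell^{k/2-1} \equiv (-1)^{k/2-1}$; this is exactly where the case $k > 2$ genuinely differs from Gross's weight-$2$ computation. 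Once the signs are pinned down, Proposition \ref{prop:besser-property-finite-singular} follows because $M$-admissibility ($p^{M+1} \nmid \ell+1 \pm a_\ell$) forces both coefficients to be $p$-adic units.
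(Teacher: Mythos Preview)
Your outline is broadly on the right track and invokes the correct references, but there is one genuine misstep and one missing mechanism that together make the ``bookkeeping'' you flag as the main obstacle actually unresolvable from your sketch.

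First, the misstep: you attribute the appearance of $\epsilon_n$ to ``passing between the finitely many primes above $\lambda$ in $K[1]$'' during corestriction. That is not where it comes from. In the paper's argument the formula is first obtained with the local Frobenius $\phi$ still sitting in the coefficients; only at the very end does one observe that, since $\ell$ is inert in $K$, the restriction of $\phi$ to $\Gal(K_\lambda/\Q_\ell)$ \emph{is} complex conjugation $\tau_c$, and then Proposition~\ref{prop:besser-property-complex-conjugation} converts $\phi \cdot P(n)$ into $\epsilon_n P(n)$ and $\phi \cdot P(m)$ into $-\epsilon_n P(m)$. The corestriction $K[1] \to K$ is applied cleanly and does not produce any sign.

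Second, the missing mechanism: the paper does not extract the singular component by applying the telescopic identity to $D_\ell$ as you propose, but rather works at the cocycle level following \cite[Section~9]{nekovar:kuga-sato}. One fixes cocycles $\tilde{x}, \tilde{y}$ representing $D_m z_{f,m}$ and $D_m z_{f,n}$, uses (E1) to write $\cores(\tilde{y}) - a_\ell \tilde{x} = (g-1)a$ for some $a \in T$, and identifies $z_0(\sigma_0) \equiv -a \bmod p^M$. Evaluating at $g_0 = \phi^2$ and using (E2) in the form $a_y = \phi(a_x)$ together with the characteristic polynomial $\phi^2 - (a_\ell/\ell^{k/2-1})\phi + \ell = 0$ produces the linear relation between $a_x$ and $a$. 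Converting this into the $\phi^{\fs}_\ell$ language then crucially uses $\beta_\ell \circ \phi = -\phi \circ \beta_\ell$ (because $\phi \tau_\ell \phi^{-1} = \tau_\ell^\ell$ and $\ell \equiv -1 \bmod p^M$); this sign flip is the single most error-prone step and is invisible in your sketch. Your explanation of the $(-1)^{k/2-1}$ from $\ell^{k/2-1} \equiv (-1)^{k/2-1}$ is correct, but without the cocycle-level identification of $a$ and the $\beta_\ell$--$\phi$ anticommutation you cannot actually pin down the four signs in the formula.
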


\begin{remark}
The proof of this formula is an application of the abstract theory of \cite[Section 9]{nekovar:kuga-sato} and the formula itself already appears in the 
literature in many versions. However we will spell out a detailed proof of it, since in the occurrences of this formula that we found in the literature 
there are misprints to correct. 
For instance the original formula of \cite[Proposition 10.2(4)]{nekovar:kuga-sato} there is a problem in the definition of $\epsilon_n$, 
firstly noticed by Besser (see \cite[proof of Proposition 3.2]{besser:finiteness-sha}), and some sign errors that are carried on by all the other versions of this formula that we found in the literature. 
In \cite{elias:kuga-sato} the right hand side is wrong because of an error in the computation of $\charpol(\Frob_\ell \mid T)$. 
In \cite{castella-hsieh:heegner-cycles-p-adic-l-functions} the term $\ell^{1-r}$ shouldn't appear (on both sides).
The significance of such a formula is, in modern terms, the fact that it allows to build, from an anticyclotomic Euler system, a Kolyvagin's one (see \cite{rubin:kolyvagin}, \cite{howard:heegner-points}).
\end{remark}
\begin{proof}
Let us first consider the case where $n = \ell \cdot 1$, i.e.~the formula
\[
\biggl(\frac{\ell+1}{p^M}-\frac{(-1)^{k/2 - 1} w_f a_\ell}{p^M} \biggr)  \bigl[ \loc_\ell  P(\ell) \bigr]_s =
\biggl(\frac{\ell+1}{p^M}w_f - \frac{a_\ell}{p^M}\biggr)\phi^{\fs}_\ell\bigl(\loc_\ell P(1)\bigr).
\]
Note that we have the following tower of field extension, where $K[1]_\lambda^+$ 
is the completion at the prime above $\ell$ of the maximal real subfield $K[1]^+$ of $K[1]$:
\[
\begin{tikzcd}[row sep = tiny, column sep = small]
                            &                                               &                                                                     & \Q_\ell^t \ar[dr, dash] \ar[ddl, dash]\\
                            &                                               &                                                                     &                                           & K[\ell]_\lambda^\ur \ar[d, dash] \ar[r, equal] & \Q_\ell^\ur \cdot K[\ell]_\lambda \\
K_\lambda^\ur \ar[r, equal] & K[1]_\lambda^\ur \ar[r, equal]                & \Q_\ell^\ur \ar[rru, dash, "\ell + 1"'{xshift=-5pt}] \ar[d, dash]   &                                           & K[\ell]_\lambda\\
                            &                                               & K[1]_\lambda \ar[d, equal] \ar[rru, dash, "\ell + 1"'{xshift=-5pt}] &\\
                            &  K[1]_\lambda^+ \ar[dr, equals] \ar[ur, dash] & K_\lambda \ar[d, dash]\\
                            &                                               & \Q_\ell
\end{tikzcd}
\]
Indeed
\begin{itemize}
\item $\ell$ is inert (hence unramified) in $K$ and hence $K_\lambda^\ur = \Q_\ell^\ur$;
\item $\ell$ is totally split in $K[1]/K$, hence $K[1]_\lambda = K_\lambda$;
\item $\lambda_1$ is totally ramified in $K[\ell]/K[1]$ (that is a cyclic extension of order $\ell + 1$), i.e.~$\lambda_1 = \lambda_\ell^{\ell + 1}$: it follows that 
$[K[\ell]_\lambda : K[1]_\lambda] = \ell +1$ and $\Gal(K[\ell]_\lambda/K[1]_\lambda) \cong \Gal(\kappa_\ell/\kappa_1)$,  
that is the Galois group of a cyclic extension of finite fields, 
moreover the ramification index of $K[\ell]_\lambda/\Q_\ell$ is prime to $\ell$, thus $K[\ell]^\ur \subseteq \Q_\ell^t$;
\item $K_\lambda \subseteq K[1]_\lambda \subseteq \Q_\ell^\ur$, therefore $K_\lambda^\ur = K[1]_\lambda^\ur = \Q_\ell^\ur$;
\item $K[\ell]_\lambda^\ur = \Q_\ell^\ur \cdot K[\ell]_\lambda$;
\item $K[\ell]_\lambda^\ur/\Q_\ell^\ur$ is a cyclic extension of order $\ell + 1$, since 
\[
\begin{aligned}
    \Gal(K[\ell]_\lambda^\ur/\Q_\ell^\ur) &= \Gal(\Q_\ell^\ur \cdot K[\ell]_\lambda/\Q_\ell^\ur) \\
&\cong \Gal(K[\ell]_\lambda/\Q_\ell^\ur \cap K[\ell]_\lambda) = \Gal(K[\ell]_\lambda/K[1]_\lambda);
\end{aligned}
\]
\item $\Q_\ell \subseteq K[1]_\lambda^+ \subseteq K[1]_\lambda = K_\lambda$, but the degree of $K[1]_\lambda/K[1]_\lambda^+$ is at least $2$, 
thus $K[1]_\lambda^+ = \Q_\ell$.
\end{itemize}
Now consider the inclusions
\[
\begin{tikzcd}
\tilde{G} := \Gal(\bar{\Q}/K[1]^+) \ar[r, "\supseteq" description, phantom] \ar[d, "\rotsupseteq" description, phantom] & G :=  \Gal(\bar{\Q}/K[1]) \ar[r, "\supseteq" description, phantom] \ar[d, "\rotsupseteq" description, phantom] & H:= \Gal(\bar{\Q}/K[\ell]) \ar[d, "\rotsupseteq" description, phantom]\\
\tilde{G}_0 := \Gal(\bar{\Q}_\ell/\Q_\ell) \ar[r, "\supseteq" description, phantom] & G_0 :=  \Gal(\bar{\Q}_\ell/K[1]_\lambda) \ar[r, "\supseteq" description, phantom] & H_0:= \Gal(\bar{\Q}_\ell/K[\ell]_\lambda)
\end{tikzcd}
\]
where the squares are cocartesian, i.e.~$G_0 = \tilde{G}_0 \cap G$, $H_0 = \tilde{G}_0 \cap H$, moreover we have that 
$G/H = \Gal(K[\ell]/K[1]) = \langle \sigma_\ell \rangle$ and $G_0/H_0 = \Gal(K[\ell]_\lambda/K[1]_\lambda) = \langle \sigma_{\ell, 0} \rangle$, 
where $(\sigma_{\ell, 0})\vert_{K[\ell]} = \sigma_\ell$. 
Write $\sigma = \sigma_\ell$ and $\sigma_0 = \sigma_{\ell, 0}$. 
It is well-known that
\[
\Gal(\Q_\ell^t/\Q_\ell) = \Gal(\Q_\ell^t/\Q_\ell^\ur) \rtimes \Gal(\Q_\ell^\ur/\Q_\ell) \cong \hat{\Z}'(1) \rtimes \hat{\Z},
\]
where $\hat{\Z}'(1) = \bigl(\prod_{q \ne \ell} \Z_q\bigr)(1)$: 
$\hat{\Z}$ and $\hat{\Z}'(1)$ are procyclic and have generators $\phi$ and $\tau$ respectively such that $\phi \tau \phi^{-1} = \tau^\ell$.
It follows an analogous description in a compatible way of the subgroups
\begin{align*}
\Gal(\Q_\ell^t/K_\lambda) = \Gal(\Q_\ell^t/\Q_\ell^\ur) \rtimes \Gal(\Q_\ell^\ur/K_\lambda)  &\cong \langle \tau \rangle  \rtimes \langle \phi^2 \rangle = \hat{\Z}'(1) \rtimes 2\hat{\Z}, \\
\Gal(\Q_\ell^t/K[\ell]_\lambda) = \Gal(\Q_\ell^t/K[\ell]_\lambda^\ur) \rtimes \Gal(K[\ell]_\lambda^\ur/K[\ell]_\lambda)  &\cong \langle \tau^{\ell+1} \rangle  \rtimes \langle \phi^2 \rangle = (\ell+1)\hat{\Z}'(1) \rtimes 2\hat{\Z}.
\end{align*}
Let us denote by $\pi$ the natural projection 
\[
\pi \colon \tilde{G}_0 = \Gal(\bar{\Q}_\ell/\Q_\ell) \surj \Gal(\Q_\ell^t/\Q_\ell) \cong \Z'(1) \rtimes \hat{\Z};
\]
the induced projections $\pi \colon G_0 \surj \hat{\Z}'(1) \rtimes 2\hat{\Z}$ and $\pi \colon H_0 \surj (\ell + 1)\hat{\Z}'(1) \rtimes 2\hat{\Z}$
have the same kernel. Hence $G_0/H_0 \cong  \pi(G_0)/\pi(H_0) \cong \hat{\Z}'(1)/(\ell + 1)\hat{\Z}'(1)$,
so that we may assume that $\sigma_\ell$ is such that $\tau \bmod (\ell + 1)\hat{\Z}'(1)$ corresponds to $\sigma_{\ell, 0}$ in this isomorphism.

By \cite[Lemma 4.1]{nekovar:kuga-sato} and \cite[B.2.8]{rubin:euler-systems}, we have the following commutative diagram:
\[
\begin{tikzcd}
\hone(G_0, T) = \hone(K_\lambda, T)   & \hone(K_\lambda^t/K_\lambda, T) \ar[l, "\infl", "\sim"'] \ar[d, "\rotdxsim"]   & \hone(K_\lambda^\ur/K_\lambda, T) \ar[l,    "\infl", "\sim"'{xshift=3pt}] \ar[r, "\sim"]  \ar[d, "\rotdxsim"] & T/(\phi^2 -1)T\\
                & \hone(\hat{\Z}'(1) \rtimes 2\hat{\Z}, T)                                 & \hone(2\hat{\Z}, T)  \ar[l, "\sim"'{xshift=-3pt}] \ar[ur, "\rotatebox{20}{$\sim$}"{yshift = -3pt}]                             
\end{tikzcd}
\]
where the last isomorphism is the evaluation at $\phi^2$ and similarly $\hone(H_0, T) \cong T/(\phi^2 -1)T$. 
This means that for a $1$-cocycle $F \in Z^1(\hat{\Z}'(1)\rtimes 2\hat{\Z}, T)$, we have
\[
F(\tau^u \phi^{2v}) = (1 + \phi^2 + \dots + \phi^{2(v-1)})a + (\phi^2 - 1)b,
\]
for $a, b \in T$ and $a \equiv F(\phi^2) \bmod (\phi^2-1)T$. Indeed, $F(\tau) = 0$, as the inflation is an isomorphism and $\tau$ acts trivially on $T$ (see Remark \ref{remark:finite-singular-iso-proof}), and hence
\[
F(\tau^v) = F(\tau) + \tau \cdot F(\tau^{v-1}) = F(\tau^{v-1}) = \dots = F(\tau) = 0,
\] 
and 
\begin{align*}{}
F(\tau^u \phi^{2v}) &= F(\tau^u) + \tau^u \cdot F(\phi^{2v}) = F(\phi^{2}\phi^{2(v-1)}) = F(\phi^2) + \phi^2 \cdot F(\phi^{2(v-1)})\\
                    &\equiv a + \phi^2 \cdot (F(\phi^2) + \phi^2 \cdot F(\phi^{2(v-2)})) \equiv a + \phi^2 \cdot a + \phi^4 \cdot F(\phi^{2(v-2)})\\
                    &= \dots \equiv (1 + \phi^2 + \dots + \phi^{2(v-1)})a \bmod (1 + \phi^2)T.
\end{align*}
Now let $x := z_{f, 1} \in \hone(K[1], T) = \hone(G, T)$ and $y := z_{f, \ell} \in \hone(K[\ell], T) = \hone(H, T)$, so that $\cores^G_H(y) = a_\ell x$ and 
$z := \dkol{\ell} \in \hone(K[1], A[p^M])= \hone(G, A[p^M])$. In particular we have that $\res^G_H(z) = D_\ell(\red_{p^M}(y)) \in \hone(H, A[p^M])$. 
Note that for any $t \in T$:
\[
\sum_{i=1}^\ell i \sigma_0^i \cdot t = \sum_{i=1}^\ell i t = \biggl(\, \frac{(\ell+1)\ell}{2}\biggr) t \equiv 0 \bmod p^M T
\]
as $\sigma_0$ correspond to $\tau$ and hence acts trivially on $T$. Therefore 
\[
\res^G_{H_0}(z) = \res^H_{H_0} \bigl( \res^G_H(z)\bigr) = \res^H_{H_0} \bigl(D_\ell(\red_{p^M}(y))\bigr) = \red_{p^M} \biggl(\sum_{i=1}^\ell i \sigma_0^i \cdot \res^H_{H_0}(y)\biggr) = 0.
\]
Thus there is 
$z_0 \in \hone(G_0/H_0, A[p^M]) \cong \Hom_\cont(\langle \sigma_0  \rangle, A[p^M])$, by the inflation-restriction exact sequence, 
such that $\infl_{G_0/H_0}^{G_0}(z_0) = \res^G_{G_0}(z) \in \hone(G_0, A[p^M])$.
Our next goal will be to calculate $z_0(\sigma_0)$. In order to do that 
we need to perform some computations at the levels of cocycles: let $\tilde{x} \in Z^1(G, T)$, $\tilde{y} \in Z^1(H, T)$ representing respectively $x$ and $y$. 

Since $\cores^G_H(y) = a_\ell x$, then $\cores^G_H(\tilde{y})- a_\ell \tilde{x}$ is a coboundary, i.e.~there is an element $a \in T$ 
(and the computations of \cite[Section 7]{nekovar:kuga-sato} show that $z(\sigma_0) \equiv -\sigma_0 a = -a \bmod p^M$), such that for any $g \in G$,
\[
\cores^G_H(\tilde{y})(g) - a_\ell \tilde{x}(g) = (g-1)a.
\]
Fixing a lift $\tilde{\sigma}_0 \in G_0$ of $\sigma_0 \in G_0/H_0$, and applying to $g = g_0 \in H_0$, becomes
\[
\sum_{i = 0}^\ell \tilde{y}(\tilde{\sigma}_0^{-i} g_0 \tilde{\sigma}^i_0) - a_\ell \tilde{x}(g_0) = (g_0 - 1)a.
\]
On the other hand, if $\pi(g_0) = \tau^u \phi^{2}$, we showed above that
\[
\tilde{x}(g_0) = a_x + (\phi^2 -1)b_x, \qquad \tilde{y}(g_0) = a_y + (\phi^2 -1)b_y,
\]
where $a_x, a_y, b_x, b_y \in T$ and $[\res^G_{G_0}(\tilde{x})]$, $[\res^H_{H_0}(\tilde{y})]$ 
correspond respectively to $a_x, a_y \bmod (\phi^2 -1)T$ and therefore evaluating at $g_0 = \phi^2$ 
(as $\tilde{y}(\tilde{\sigma}_0^{-i} g_0 \tilde{\sigma}^i_0) = \tilde{y}(\tau^{-i}\phi^2\tau^i) = \tilde{y}(\tau^{i\ell^2}\phi^2)$) we get
\[
(\ell + 1)a_y - a_\ell a_x = (\phi^2 - 1)(a + a_\ell b_x - (\ell+1)b_y)
\]
and since $T$ is torsion free, $p^M \mid a_\ell$ and $p^M \mid \ell + 1$, we may write 
\[
\frac{\ell + 1}{p^M}a_y - \frac{a_\ell}{p^M}a_x = \frac{(\phi^2 - 1)}{p^M}(a + p^M \cdot \ast),
\]
for $\ast = \frac{a_\ell}{p^M}b_x - \frac{\ell + 1}{p^M}b_y \in T$.
Now observe that $\res^G_{G_0}(x) = \loc_\ell(z_{f, 1})$ and $\res^H_{H_0}(y)=\loc_\ell(z_{f, \ell})$ and $\phi$ is the local Frobenius, therefore by (E2) 
we get that $\phi(a_x) \equiv a_y \bmod (\phi^2 - 1)T$: we may safely suppose to have previously choosen $a_y = \phi(a_x)$. Moreover on $T$
\[
\phi^2 - \frac{a_\ell}{\ell^{k/2 - 1}} \phi + \ell = 0
\]
since $\charpol(\Frob_\ell \vert T) = X^2 - a_\ell/\ell^{k/2-1} \, X + \ell$ by Remark \ref{rk:charpol-frobenius-ell} and therefore the above formula becomes
\[
\biggl(\frac{\ell + 1}{p^M} \phi - \frac{a_\ell}{p^M}\biggr)a_x = \biggl(\frac{a_\ell}{p^M} \ell^{1 - k/2}\phi - \frac{(\ell + 1)}{p^M}\biggr)a + p^M \cdot \ast
\]
and hence, reducing to $A[p^M]$:
\[
\biggl(\frac{\ell + 1}{p^M} \phi - \frac{a_\ell}{p^M}\biggr)\red_{p^M}(a_x) = \biggl(\frac{a_\ell}{p^M} (-1)^{1 - k/2}\phi - \frac{(\ell + 1)}{p^M}\biggr)\red_{p^M}(a),
\]
since  $\ell \equiv -1 \bmod p^M$. 
Now we want to express $\red_{p^M}(a)$ in terms of $z$: note that
$[\res^G_{G_0}(z)]_s = [\infl_{G_0/H_0}^{G_0}(z_0)]_s = -\beta_\ell^{-1}\bigl(\red_{p^M}(a)\bigr)$, 
as $\sigma_0$ may be lifted to $\tau$ and $\red_{p^M}(a) = -z_0(\sigma_0)$. Moreover $\red_{p^M}(a_x) = \alpha_\ell\bigl(\res^G_{G_0}(x)\bigr)$ and hence 
applying $\beta_\ell^{-1}$ the formula becomes
\[
\biggl(-\frac{\ell + 1}{p^M} \phi - \frac{a_\ell}{p^M}\biggr)\phi^{\fs}_\ell\bigl(\res^G_{G_0}(x)\bigr) = \biggl(\frac{a_\ell}{p^M} (-1)^{1- k/2}\phi+ \frac{(\ell + 1)}{p^M}\biggr)[\res^G_{G_0}(z)]_s,
\]
since $\beta_\ell \circ \phi = - \phi \circ \beta_\ell$: for $\xi \in \hone(K_\lambda^t/K_\lambda^\ur, A[p^M]) = \Hom(\Gal(K_\lambda^t/K_\lambda^\ur), A[p^M])$, 
\[
(\phi\xi)(\tau_\ell) = \phi \cdot \xi(\phi \tau_\ell \phi^{-1}) = \phi \cdot \xi(\tau_\ell^{\ell}) = \ell \bigl(\phi \cdot  \xi(\tau_\ell)\bigr) \equiv - \phi \cdot \xi(\tau_\ell) \mod p^M.
\]
Applying the corestriction, $\phi^{\fs}_\ell\bigl(\res^G_{G_0}(x)\bigr)$ and $[\res^G_{G_0}(z)]_s$ become respectively $\phi_\ell^\fs\bigl(P(1)\bigr)$ 
and $[\loc_\ell P(\ell)]_s$. Finally note that $\phi$ and $\alpha_\ell$ commute, hence $\phi_\ell^\fs \circ \phi = - \phi \circ \phi_\ell^\fs$, observe that $\phi$ acts as $\tau_c$ on $\hone(K_\lambda, A[p^M])$, as they coincide in $\Gal(K_\lambda/\Q_\ell)$, and recall that $\tau_c \cdot P(1) = w_f P(1)$ and $\tau_c \cdot P(\ell) = -w_f P(\ell)$. Therefore we obtain the desired formula.

The general formula, for $n = m \cdot \ell$, is proven in the same way if we put $G=\Gal(\bar{\Q}/K[m])$, 
$H=\Gal(\bar{\Q}/K[n])$, $G_0 = \Gal(\bar{\Q}_\ell/K[m]_\lambda)$, $H_0 = \Gal(\bar{\Q}_\ell/K[n]_\lambda)$,
$x = D_m z_{f, m}$, $y = D_m z_{f, n}$, $z = \dkol{n}$, recalling that 
$\tau_c \cdot P(n) = \epsilon_n P(n)$ and $\tau_c \cdot P(m) = \epsilon_{m}P(m)=-\epsilon_n P(m)$.
\end{proof}
\section{Consequences for Anticyclotomic Iwasawa theory}\label{sec:consequences}
In this section we prove our main result Theorem \ref{th:main} following \cite{nekovar:kolyvagin}, 
showing, under some conditions on the basic generalized Heegner cycle, that the ($\p$-part of the) Shafarevich--Tate group $\shat(f/K_\infty)$ 
(that will be defined in Definition \ref{def:shat-K-infty})
of a modular form $f$, over the anticyclotomic extension $K_\infty$ of an imaginary quadratic field $K$, vanishes.

\subsection{Setup} \label{sec:framework}

All the notations of the previous sections are in force, in particular recall from Section \ref{sec:mod-forms-gen-heegner-cycles} that 
$f = \sum_{n \ge 1} a_n q^n$ is a $\p$-odinary cuspidal newform of level $\Gamma_0(N)$ and weight $k > 2$, $V$ is the self-dual representation of
Section \ref{sec:selfdual-rep}, $K_\infty$ denotes the anticyclotomic $\Z_p$-extension of $K$ introduced in Section \ref{sec:anticyclotomic-extension} and $K_n$ its $n$-th layer. We will need moreover some technical assumptions. Recall that $F = \Q[\set{a_n}_{n>0}]$ denotes the Hecke field of $f$.

\begin{assumption}\label{assumption:1}
Take the following hypothesis on the prime $p$.
\begin{enumerate}[label=(\emph{\roman*})]  
\item $p \nmid 6N\phi(N)(k-2)!$, where $\phi$ is the Euler function;
\item the image of $\rho_{f, p} \colon G_\Q \to \GL_2(F \otimes \Q_p)$ contains the set 
\[
\set{g \in \GL_2(\O_F \otimes \Z_p) : \det g \in (\Z_p^\times)^{k-1} };		
\]
\item $p$ does not ramify in $F$;
\item $p$ splits in $K$; 
\item $p \nmid h_K$, where $h_K$ is the class number of $K$;
\item $p \nmid c_f = \big[\O_F : \O_f \big]$, where $\O_f = \Z[\set{a_n}_{n > 0}]$.
\end{enumerate}
\end{assumption}

\begin{remark}
Note that the first three hypothesis say that $p$ is a non-exceptional prime as in Definition \ref{def:admissible-primes}. 
For a discussion of the significance of these hypothesis see Remark \ref{rk:admissible-primes}; 
(\emph{iv}) is a technical hypothesis in the construction of the generalized Heegner cycles;
(\emph{v}) is important for the properties of $K_\infty$, as described in Section \ref{sec:anticyclotomic-extension}; 
(\emph{vi}) is a technical hypothesis coming from 
\cite{longo-vigni:generalized}: assuming it $\O_f \otimes \Z_p = \O_F \otimes \Z_p$. 
As observed in \cite[Remark 2.2]{longo-vigni:generalized}, once $f$ and $K$ are given, the restrictions of 
Assumption \ref{assumption:1} are satisfied by an infinite set of primes. 
\end{remark}
 	
For any $n \ge 0$ define $\calX_n = \honef(K_n, A)^\vee$, that is naturally a $\Lambda_n$-module, 
in particular we will write $\calX = \calX_0 =  \honef(K, A)^\vee$ (that is a module over $\O = \Lambda_0$). Their projective limit  
\[
\calX_\infty = \projlim_n \calX_n = \honef(K_\infty, A)^\vee
\]
has a structure of $\Lambda$-module. In the framework of Assumption \ref{assumption:1}, 
Longo and Vigni constructed in \cite{longo-vigni:generalized} the $\Lambda$-adic anticyclotomic Kolyvagin system of generalized Heegner cycles and 
they use it in order to describe the structure of $\calX_\infty$ as a $\Lambda$-module. 
\begin{theorem}[{\cite[Theorem 1.1]{longo-vigni:generalized}}]\label{th:longo-vigni}
There is a finitely generated torsion $\Lambda$-module $M$ such that $\calX_\infty$ is pseudo-isomorphic to $\Lambda \oplus M \oplus M$, i.e.~there exists a 
morphism $\eta \colon \calX_\infty \to \Lambda \oplus M \oplus M$ of $\Lambda$-modules with finite kernel and cokernel. 
\end{theorem}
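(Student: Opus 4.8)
The plan is simply to cite this as \cite[Theorem 1.1]{longo-vigni:generalized} and use it as a black box; for the reader's orientation, here is the strategy underlying their proof. First one promotes the anticyclotomic Euler system $\{z_{f,n}\}_{n \in \squarefree}$ of generalized Heegner cycles of Castella--Hsieh to a $\Lambda$-adic Kolyvagin system. Concretely, applying Kolyvagin's derivative operators $D_n$ as in Section \ref{sec:euler-system-generalized-heegner-cycles} and passing to the inverse limit along the anticyclotomic tower $K_\infty/K$ (the norm-compatibility being encoded in the Euler system relations (E1)--(E3)) produces a family of classes in $\honetildefiw(K_\infty/K, T)$, indexed by the Kolyvagin primes, which one checks satisfies the Kolyvagin-system axioms over $\Lambda$ in the sense of Howard and Mazur--Rubin. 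The big-image hypothesis (Assumption \ref{assumption:1}(\emph{ii}), i.e.~non-exceptionality of $p$) guarantees the residual-representation inputs — irreducibility, surjectivity, existence of sufficiently many admissible primes — required by that machinery; the classes $\dkol{n}$ and their properties established in Lemmas \ref{lemma:invariant-kolyvagin-classes}--\ref{lemma:res-iso} are exactly what is needed at the bottom layer.

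Second one invokes the general structure theorem for Selmer modules cut out by a nonzero $\Lambda$-adic Kolyvagin system. The self-duality $T^\ast(1) \cong T$ of Section \ref{sec:selfdual-rep} makes the Greenberg/Bloch--Kato local conditions their own orthogonal complements, so (as in Theorem \ref{th:duality-iwasawa-rep}) the Selmer complex $\selmeriw(K_\infty/K, T)$ is self-dual up to the involution $\iota$ and a shift. Feeding the Kolyvagin system into the Euler-system/Kolyvagin-system bound then shows that $\honetildefiw(K_\infty/K, T)$ has $\Lambda$-rank exactly $1$, with the free part generated by the $\Lambda$-adic Heegner class, and that its torsion submodule is controlled by the Kolyvagin system; dualizing yields that $\calX_\infty$ is, up to pseudo-isomorphism, $\Lambda$ direct sum a finitely generated torsion $\Lambda$-module $N$.

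The main obstacle — and the genuine content of the theorem — is the square shape $N$ pseudo-isomorphic to $M \oplus M$. Here one follows Howard's argument: the self-duality of $T$ induces on the torsion part of $\calX_\infty$ a perfect $\Lambda$-bilinear pairing, skew-Hermitian with respect to $\iota$ (a $\Lambda$-adic Cassels--Tate / Flach pairing), and the fact that complex conjugation acts on $\Gamma$ by inversion forces this pairing to be alternating in the relevant sense; a torsion $\Lambda$-module carrying such an alternating perfect pairing, with $\iota$-stable characteristic ideal, is necessarily pseudo-isomorphic to $M \oplus M$, so that its characteristic ideal is a square. Checking non-degeneracy and the alternating property of this pairing for generalized Heegner cycles — compatibility with the Kolyvagin system and with the sign coming from (E3) — is the delicate step; the rest is an application of the now-standard Selmer-complex and Kolyvagin-system formalism recalled in Sections \ref{sec:selmer-complexes-iwasawa} and \ref{sec:euler-system-generalized-heegner-cycles}.
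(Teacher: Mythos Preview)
Your proposal is correct and matches the paper's treatment exactly: the paper states this theorem with the citation \cite[Theorem 1.1]{longo-vigni:generalized} and gives no proof, using it purely as a black box. Your additional sketch of the Longo--Vigni strategy (promotion to a $\Lambda$-adic Kolyvagin system, rank-one structure theorem, and Howard's alternating-pairing argument for the square shape $M \oplus M$) is accurate orientation that the paper itself does not provide.
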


Our goal is to refine this result, showing that, if the basic generalized Heegner cycle $z_{f, K}$  is non-torsion and not divisible by $p$ in $\honef(K, T)$, 
then $\calX_\infty$ is in fact free of rank one over $\Lambda$: this is the content Theorem \ref{th:main}. We will obtain it following   
the method of \cite{nekovar:kolyvagin}, i.e.~putting together the result of the Euler system argument of Section \ref{sec:vanishing-of-sha} 
with an abstract Iwasawa theoretic one.

We need moreover some other technical assumptions. From Example \ref{ex:unramified-finite}, we recall that $c_v(A) = 1$ for $v \nmid p$, since $V$ is unramified at $v \nmid pN$. We need to assume that this is the case also for the others.
\begin{assumption}\label{assumption:2}
Assume that $c_{v}(A) = 1$ for any place $v$ of $K$ such that $v \mid N$.  
\end{assumption}
Furthermore, we take the following assumption on the $q$-expansion of $f$.
\begin{assumption}\label{assumption:3}
$i_p(a_p) \not \equiv 1 \mod \p$.
\end{assumption}
\subsection{Comparison of Selmer groups, Exact Control Theorem}\label{sec:comparison-selmer-vs-generalized}

We now make a comparison of the Selmer groups introduced in Section \ref{sec:selmer-groups-and-complexes}.
In Section \ref{sec:selfdual-rep} we saw that there is an exact sequence
\[
0 \to V^+ \to V \to V^- \to 0
\]
of $\K[G_p]$-modules such that $V^{\pm}$ has dimension $1$ and $G_p$ acts on $V^+$ by $\delta \chi_p^{k/2}$ and on $V^-$ by $\delta^{-1}\chi_p^{1-k/2}$, 
for $\delta$ an unramified character and $\chi_p$ the $p$-adic cyclotomic one. Set $T^+ = T \cap V^+$, $A^+ = V^+/T^+$ and $X^- = X/X^+$, for $X = T, A$.

\begin{lemma}\label{lemma_ho(Ev, V-)}
For any a number field $E$ and any place $v \mid p$ we have $\ho(E_v, V^-) = 0$.
\end{lemma}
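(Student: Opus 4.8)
The plan is to reduce the statement to the non-triviality of an explicit character and then check that non-triviality on inertia. Recall from Section~\ref{sec:selfdual-rep} that, as a representation of $G_p$, the line $V^-$ is one-dimensional over $\K$ and $G_p$ acts on it through the character $\delta^{-1}\chi_p^{1-k/2}$, where $\delta$ is unramified. For a place $v \mid p$ of the number field $E$ one views $V^-$ as a $G_v$-module by restricting this $G_p$-action along the inclusion $G_v \subseteq G_p$ coming from $\Q_p \subseteq E_v$; write $\psi \colon G_v \to \K^\times$ for the resulting character. Since $V^-$ is a line, $\ho(E_v, V^-) = (V^-)^{G_v}$ equals either $0$ or all of $V^-$, and the latter happens precisely when $\psi$ is trivial. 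So it suffices to prove $\psi \ne 1$.

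First I would restrict $\psi$ to the inertia subgroup $I_v \subseteq G_v$. As $\delta$ is unramified, $\psi|_{I_v} = \chi_p^{1-k/2}|_{I_v}$, and since $k > 2$ the exponent $1 - k/2$ is a nonzero integer. Next I would record that $\chi_p(I_v)$ is an open subgroup of $\Z_p^\times$: indeed $\chi_p(I_p) = \Z_p^\times$ because $\Q_p(\mu_{p^\infty})/\Q_p$ is totally ramified (cf.\ Example~\ref{ex:cyclotomic-ramification}), and $I_v$ has finite index in $I_p$ as $E_v/\Q_p$ is a finite extension. Finally, the $(1-k/2)$-th power map on $\Z_p^\times$ is injective on the free pro-$p$ part $1 + p\Z_p \cong \Z_p$ (there it is multiplication by the nonzero integer $1 - k/2$, using that $p$ is odd), and $\chi_p(I_v)$, being open, meets $1 + p\Z_p$ in an infinite subgroup; hence $\chi_p^{1-k/2}(I_v) \ne \{1\}$. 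Therefore $\psi|_{I_v}$, and a fortiori $\psi$, is non-trivial, so $(V^-)^{G_v} = 0$, which is the claim.

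The argument is essentially formal once the key input ---that the cyclotomic character already has open (in particular infinite) image on the inertia group at $p$--- is in place, so I do not expect any genuine obstacle; the one point requiring a little care is quantifying ``how large'' $\chi_p(I_v)$ is inside $\Z_p^\times$, handled above via total ramification of $\Q_p(\mu_{p^\infty})$. As an alternative one could instead invoke that $V^-$ is de Rham with Hodge--Tate weight $k/2 - 1 \ne 0$ and that a de Rham $G_v$-representation with no vanishing Hodge--Tate weight has no non-zero $G_v$-invariants, but the inertia computation is more elementary and self-contained.
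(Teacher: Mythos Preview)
Your proof is correct and follows the same approach as the paper: restrict the character $\delta^{-1}\chi_p^{1-k/2}$ to inertia, where $\delta$ vanishes, and observe that $\chi_p^{1-k/2}$ is nontrivial there because $k>2$. The paper's argument is terser—it simply picks $\sigma \in I_p$ with $\chi_p(\sigma)^{1-k/2}\ne 1$—whereas you are more careful in distinguishing $I_v$ from $I_p$ and in justifying why $\chi_p^{1-k/2}$ remains nontrivial on the finite-index subgroup $\chi_p(I_v)$; this extra care is warranted, since the statement is for a general $E_v$ and not just $\Q_p$.
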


\begin{proof}
The inertia subgroup $I_p$ acts on the one dimensional vector space $V^-$ as $\chi_p^{1- k/2}$. Since $k>2$, there exists a $\sigma \in I_p$ such that  $\chi_p(\sigma)^{1 - k/2 } \ne 1$. This completes the proof.
\end{proof}

\begin{proposition}
$\honetildef(E, V) = \honef(E ,V) = \hone_\str(E, V) = \hone_\Gr(E, V)$.
\end{proposition}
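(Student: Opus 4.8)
The plan is to prove the three equalities from right to left, using Lemma \ref{lemma_ho(Ev, V-)} together with the two exact sequences already available. First I would record that $\cohomology^0(G_v, V) = 0$ for every $v \mid p$: from the exact sequence $0 \to V^+ \to V \to V^- \to 0$ of $\K[G_p]$-modules one has $\ho(E_v, V) \subseteq V^{I_v}$ and $0 \to (V^+)^{I_v} \to V^{I_v} \to (V^-)^{I_v}$, and since $\delta$ is unramified the inertia group $I_v$ acts on the line $V^+$ (resp.\ $V^-$) through $\chi_p^{k/2}$ (resp.\ $\chi_p^{1-k/2}$), both of which are nontrivial on $I_v$ because $k > 2$ forces $k/2 \neq 0$ and $1 - k/2 \neq 0$; hence $V^{I_v} = 0$ and $\cohomology^0(G_v, V) = 0$. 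Feeding $X = V$ into the exact sequence of Proposition \ref{prop:comparison-generalized-greenberg} then collapses it and makes the map $\honetildef(E, V) \to \hone_\str(E, V)$ an isomorphism. Next, the exact sequence
\[
0 \to \hone_\str(E, V) \to \hone_\Gr(E, V) \to \bigoplus_{v \mid p} \hone\bigl(G_v/I_v, \ho(I_v, V^-)\bigr)
\]
recorded in Section \ref{sec:selmer-groups} gives $\hone_\str(E, V) = \hone_\Gr(E, V)$ at once, since its last term vanishes by Lemma \ref{lemma_ho(Ev, V-)}.

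It remains to compare $\hone_\Gr(E, V)$ with the Bloch--Kato group $\honef(E, V)$. Their defining local conditions coincide at every place away from $p$, so the only point is to check $\honef(E_v, V) = \hone_\ord(E_v, V)$ for $v \mid p$. Using $\ho(I_v, V^-) = 0$ and inflation--restriction, $\hone(G_v, V^-)$ embeds into $\hone(I_v, V^-)$, so $\hone_\ord(E_v, V) = \ker\bigl(\hone(E_v, V) \to \hone(I_v, V^-)\bigr) = \ker\bigl(\hone(E_v, V) \to \hone(E_v, V^-)\bigr)$ is exactly the image of $\hone(E_v, V^+)$ in $\hone(E_v, V)$. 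On the other hand $V$ is crystalline at $v$ (good reduction, since $p \nmid N$) and $\p$-ordinary, and because $k > 2$ the Hodge--Tate weight of $V^+$ equals $k/2 \geq 2 > 0$ while that of $V^-$ equals $1 - k/2 < 0$; thus $V^+$ is the Panchishkin submodule, and the standard identification of the Bloch--Kato finite condition with the ordinary one in this situation (cf.\ \cite{bloch-kato:l-funct-tamagawa} and \cite{greenberg:iwasawa-motives}) yields $\honef(E_v, V) = \im\bigl(\hone(E_v, V^+) \to \hone(E_v, V)\bigr) = \hone_\ord(E_v, V)$. Combining the three steps proves the proposition.

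The one genuinely non-formal ingredient is this last step, the coincidence of the Bloch--Kato finite local condition at $v \mid p$ with the ordinary (Greenberg) one. This is precisely where the hypothesis $k > 2$ is used: it guarantees that $V^+$ really is the Panchishkin submodule, whereas for $k = 2$ the quotient $V^-$ would be unramified of Hodge--Tate weight $0$ and the two conditions need not agree. If one prefers not to quote this comparison, an alternative is to note that Nekovar's Selmer complex with Greenberg local conditions is set up so as to recover the Bloch--Kato Selmer group for crystalline ordinary representations of this type (see \cite{nekovar:selmer-complexes}), which with the first two steps again gives all four groups equal.
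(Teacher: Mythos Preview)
Your argument is correct and follows essentially the same strategy as the paper, with two minor reorderings worth noting. The paper obtains $\honetildef(E,V)=\honef(E,V)$ directly from a separate exact sequence of Nekov\'a\v{r} \cite[Proposition 12.5.9.2(\emph{iii})]{nekovar:selmer-complexes}, whereas you reach it by the chain $\honetildef=\hone_\str=\hone_\Gr=\honef$; and for the comparison $\honef(E_v,V)=\hone_\ord(E_v,V)$ at $v\mid p$ the paper simply cites \cite[Section 3.3.3]{longo-vigni:control-theorems}, while you give the Panchishkin argument explicitly. Both routes rely on the same vanishing input, Lemma~\ref{lemma_ho(Ev, V-)} (indeed its proof already gives the stronger $\ho(I_v,V^-)=0$ that you use), so the content is the same.
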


\begin{proof}
The first equality follows from the exact sequence \cite[Proposition 12.5.9.2(\emph{iii})]{nekovar:selmer-complexes} 
for the representation attached to an ordinary modular form
\[
0 \to \bigoplus_{v \mid p} \ho(E_v, V^-) \to \honetildef(E, V) \to \honef(E ,V) \to 0
\]
and the previous lemma. The second equality follows from the exact sequence 
\[
0 \to \tildecohomology^0_f(E, V) \to \ho(E, V) \to \bigoplus_{v \mid p} \ho(E_v, V^-) \to \honetildef(E, V) \to \hone_\str(E, V) \to 0
\]
of Proposition \ref{prop:comparison-generalized-greenberg}, combined with  Lemma \ref{lemma_ho(Ev, V-)}. 
For the equality $\honef(E ,V) = \hone_\Gr(E, V)$ it is enough to prove that $\honef(E_v, V) = \ker\big(\hone(E_v, V) \to \hone(I_v, V^-)\big)$ 
for any place $v \mid p$ of $E$. This is shown in \cite[Section 3.3.3]{longo-vigni:control-theorems}.
\end{proof}
If we restrict to $E = K$ or $E = K_n$ we may compare also the Selmer groups of $A$. 
The embedding $i_p$ induces a compatible sequence of places $v_n$ of $K_n$ over $p$, let $K_{n, v}$ be the completion of $K_n$ at $v_n$. 
By Remark \ref{rk:ramification-anticyclotomic}, the inertia degree of $K_{v, n}/\Q_p$ is $1$ and hence the residue field of $K_{v, n}$ is $\F_p$. 
Hence if $I_{n, v} = I_p \cap G_{n, v}$ denote the inertia subgroup of $G_{n, v}$,
note that $G_{n, v}/I_{n, v}$ is cyclic generated by $\Frob_p \bmod \, I_{n, v}$.

\begin{lemma}\label{lemma:hone(ho(Inv, A-))}
For any place $v_n \mid p$, $\hone\big(G_{n, v}/I_{n, v}, \ho(I_{n, v}, A^-)\big) = 0$.
\end{lemma}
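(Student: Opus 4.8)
The plan is to pin down the action of $\Frob_p$ on $A^-$ explicitly and then reduce the statement to a triviality about the cohomology of a procyclic group. First I would recall from Section~\ref{sec:selfdual-rep} that $G_p$ acts on $V^-$, hence on $A^- = A/A^+$, through the character $\delta^{-1}\chi_p^{1-k/2}$, where $\delta = \epsilon_2^{-1}$ is the unramified character with $\delta(\Frob_p) = \alpha^{-1}$ for $\alpha \in \O^\times$ the unit root of $X^2 - i_p(a_p)X + p^{k-1}$. Since the Frobenius $\Frob_p$ was normalized in Section~\ref{sec:notations} so that $\chi_p(\Frob_p) = 1$, it follows that $\Frob_p$ acts on $A^-$ simply as multiplication by $\delta^{-1}(\Frob_p) = \alpha$.

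Next I would record that $\alpha - 1 \in \O^\times$. Indeed, if $\beta$ denotes the second root of $X^2 - i_p(a_p)X + p^{k-1}$, then $\alpha\beta = p^{k-1}$ and $\alpha \in \O^\times$ force $\beta \in \p$, whence $\alpha \equiv \alpha + \beta = i_p(a_p) \bmod \p$; Assumption~\ref{assumption:3} then gives $\alpha \equiv i_p(a_p) \not\equiv 1 \bmod \p$. Consequently multiplication by $\alpha - 1$ is bijective on $A^-$ and on every $\Frob_p$-stable $\O$-submodule of it.

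Finally, set $B = \ho(I_{n,v}, A^-) = (A^-)^{I_{n,v}}$; as $I_{n,v}$ is normal in $G_{n,v}$, this is a $(G_{n,v}/I_{n,v})$-submodule of $A^-$ on which, by the first two steps, $\Frob_p$ acts as multiplication by $\alpha$. Since $G_{n,v}/I_{n,v}$ is procyclic with topological generator the image of $\Frob_p$ (as noted just before the statement, the residue field of $K_{n,v}$ being $\F_p$), and $B$ is a discrete torsion $\O$-module (being a submodule of $A^- \cong \K/\O$), I would invoke the standard identification $\cohomology^1(G_{n,v}/I_{n,v}, B) \cong B/(\Frob_p - 1)B$ valid for procyclic groups; since $\Frob_p - 1$ acts as the unit $\alpha - 1$, this quotient vanishes, which is the claim. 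The only genuinely delicate point is the bookkeeping in the first step — tracking the twist by $\chi_p^{1-k/2}$ and exploiting the normalization $\chi_p(\Frob_p)=1$ — while the rest is formal; alternatively one can bypass the procyclic-cohomology fact by observing that at each finite layer $\cohomology^1(\mathbb{Z}/p^m\mathbb{Z}, B) = \ker(N_{p^m})/(\Frob_p - 1)B = 0$ already because $\Frob_p - 1$ is surjective on $B$, and then passing to the direct limit over $m$.
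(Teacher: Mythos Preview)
Your proposal is correct and follows essentially the same approach as the paper: identify the $\Frob_p$-action on $A^-$ as multiplication by $\alpha$ via the normalization $\chi_p(\Frob_p)=1$, use Assumption~\ref{assumption:3} to see that $\alpha-1\in\O^\times$, and then apply the standard description $\cohomology^1(G_{n,v}/I_{n,v},B)\cong B/(\Frob_p-1)B$ for a discrete torsion module over a procyclic group (the paper cites \cite[B.2.8]{rubin:euler-systems} for this last step). The only differences are cosmetic ordering and your optional finite-layer variant at the end.
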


\begin{proof}
Since $\ho(I_{n, v}, A^-)$ is a subgroup of $A^- \cong \K/\O$, and so torsion and discrete, we may apply \cite[B.2.8]{rubin:euler-systems}. Therefore,
\[
\hone\big(G_{n, v}/I_{n, v}, \ho(I_{n, v}, A^-)\big) \cong \frac{\ho(I_{n, v}, A^-)}{(\Frob_p -1)\ho(I_{n, v}, A^-)} 
= \frac{\ho(I_{n, v}, A^-)}{(\alpha -1)\ho(I_{n, v}, A^-)},
\] 
where $\alpha$ be the $\p$-adic unit root of the polynomial $X^2 - i_p(a_p) X + p^{k-1}$ defined in Section \ref{sec:selfdual-rep}. Indeed we may see $A^-$ as $\K/\O$ together with the action given by $\delta^{-1}\chi_p^{1 - k/2}$ and hence
\[
\Frob_p \cdot \, x = \delta(\Frob_p)^{-1} \chi_p(\Frob_p)^{(1 - k/2)}  x = \alpha x
\]
for any $x \in \ho(I_{n, v}, A^{-})$ (seen as a subgroup of $\K/\O$), as $\delta(\Frob_p) = \alpha^{-1}$ and $\chi_p(\Frob_p) = 1$ 
for the choice of $\Frob_p$ made in Section \ref{sec:notations}. 
Then let $\beta$ be the non-unit (then $\beta \equiv 0 \hspace{-4pt} \mod \p$) root of the polynomial, $\alpha + \beta = i_p(a_p)$ and hence
$\alpha \equiv i_p(a_p)  \bmod \p$.
By Assumption \ref{assumption:3}, we have $\alpha \not \equiv 1 \hspace{-4pt}  \mod \p$, hence $\alpha -1$ is a $\p$-adic unit and so
$(\alpha -1)\ho(I_{n, v}, A^-) =  \ho(I_{n, v}, A^-)$.
\end{proof}

\begin{lemma}\label{lemma:ho(Env, A-)}
For any place $v_n \mid p$, $\ho(K_{n, v}, A^-) = 0$.
\end{lemma}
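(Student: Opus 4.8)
The plan is to reduce the statement to the computation of the Frobenius action already performed in the proof of Lemma~\ref{lemma:hone(ho(Inv, A-))}. Writing $G_{n,v}=\Gal(\bar{\Q}_p/K_{n,v})$ and $I_{n,v}$ for its inertia subgroup, we have $\ho(K_{n,v}, A^-)=(A^-)^{G_{n,v}}$; since $I_{n,v}$ is normal in $G_{n,v}$ and the quotient $G_{n,v}/I_{n,v}$ is topologically generated by the (arithmetic) Frobenius, taking $I_{n,v}$-invariants first gives
\[
\ho(K_{n,v}, A^-)=\ho(I_{n,v}, A^-)^{G_{n,v}/I_{n,v}}=\ker\bigl(\Frob_p-1 \bigm| \ho(I_{n,v}, A^-)\bigr),
\]
the Frobenius acting through $G_{n,v}/I_{n,v}$ on the discrete module $\ho(I_{n,v}, A^-)=(A^-)^{I_{n,v}}$.

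First I would make the description of this action precise. For any lift $\Phi\in G_{n,v}$ of the Frobenius generator of $G_{n,v}/I_{n,v}$, the endomorphism $x\mapsto\Phi\cdot x$ of the submodule $(A^-)^{I_{n,v}}$ does not depend on the choice of lift, because $(A^-)^{I_{n,v}}$ is $G_{n,v}$-stable ($I_{n,v}$ being normal). Hence — although $\chi_p$ is ramified at $p$ — one is free to choose $\Phi$ inside $\Gal(\bar{\Q}_p/K_{n,v}(\mu_{p^\infty}))$; this subgroup still surjects onto $G_{n,v}/I_{n,v}$ since $K_{n,v}(\mu_{p^\infty})/K_{n,v}$ is totally ramified. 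For such a $\Phi$ one has $\chi_p(\Phi)=1$, while $\delta^{-1}(\Phi)=\delta^{-1}(\Frob_p)=\alpha$ because $\delta$ is unramified, where $\alpha\in\O^\times$ is the unit root of $X^2-i_p(a_p)X+p^{k-1}$. Thus, exactly as in the proof of Lemma~\ref{lemma:hone(ho(Inv, A-))}, $\Frob_p$ acts on $\ho(I_{n,v}, A^-)$ as multiplication by $\alpha$.

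It then remains only to invoke Assumption~\ref{assumption:3}: since $\alpha\equiv i_p(a_p)\not\equiv1\bmod\p$, the element $\alpha-1$ is a $\p$-adic unit, so multiplication by $\alpha-1$ is an automorphism of every $\O$-module, in particular of $\ho(I_{n,v}, A^-)$; therefore its kernel vanishes and $\ho(K_{n,v}, A^-)=0$. I do not expect any genuine obstacle: the only point requiring a little care is that the Frobenius action on the inertia invariants is well defined and is honest multiplication by $\alpha$ — a verification already made tacitly in the proof of the preceding lemma — and everything else is formal.
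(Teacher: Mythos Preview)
Your proof is correct and follows essentially the same approach as the paper: both use that the fixed Frobenius $\Frob_p$ (chosen so that $\chi_p(\Frob_p)=1$) acts on $A^-\cong\K/\O$ as multiplication by $\alpha$, and that $\alpha-1\in\O^\times$ by Assumption~\ref{assumption:3}. The only cosmetic difference is that you pass through $(A^-)^{I_{n,v}}$ before applying Frobenius, whereas the paper observes directly that this specific $\Frob_p\in G_{n,v}$ fixes no nonzero element of $A^-$.
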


\begin{proof}
In the isomorphism $A^- \cong \K/\O$, $\Frob_p \in G_{n, v}$ acts as $\alpha$ and $(\alpha - 1) \in \O^\times$, as we saw in the previous lemma. 
Therefore for any $x + \O \in \K/\O$, with $v_\p(x) < 0$, 
\[
\Frob_p \cdot \, (x + \O) = \alpha x + \O \ne x + \O,
\]
because $v_\p(\alpha x - x) =  v_\p\big((\alpha - 1) x \big) = v_\p(x) < 0$. This means that there exists for any nonzero element of $A^-$ an 
automorphism of $G_{n, v}$ that does not fix it and so $\ho(K_{n, v}, A^-) = 0$.
\end{proof}

We may now compare the various Selmer groups of $A$ that we know. We first deal with the comparison of finite and unramified local conditions at 
primes $v$ of bad reduction, i.e.~$v \mid N$.
\begin{lemma}
For any place $v_n$ of $K_n$ such that $v_n \mid N$, we have $c_{v_n}(A) = 1$. 
\end{lemma}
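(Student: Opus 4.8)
The plan is to reduce the assertion to the decomposition group at $v_n$ and then to a one-line computation with the Frobenius; the only input beyond Assumption~\ref{assumption:2} is the shape of the local representation attached to $f$ at the primes dividing $N$.

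Write $v$ for the place of $K$ below $v_n$, so $v \mid N$. Recall the standard description of the $p$-part of the Tamagawa number at a place not above $p$: $c_{v_n}(A) = \#\coker(\Frob_{v_n} - 1 \mid T^{I_{v_n}})_\tors$, so that $c_{v_n}(A) = 1$ precisely when $\coker(\Frob_{v_n} - 1 \mid T^{I_{v_n}})$ is $\O$-torsion-free; Assumption~\ref{assumption:2} gives the analogous statement for $\Frob_v$ acting on $T^{I_v}$. Since $K_\infty/K$ is unramified outside $p$ (Remark~\ref{rk:ramification-anticyclotomic}) and $v \nmid p$, the local extension $K_{n, v_n}/K_v$ is unramified, hence $I_{v_n} = I_v$ and $\Frob_{v_n} = \Frob_v^{\,d}$ with $d = [K_{n, v_n} : K_v]$ dividing $[K_n : K] = p^n$, in particular a power of $p$. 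Thus, setting $M = T^{I_v}$ and $\sigma = \Frob_v \in \Aut_\O(M)$, it suffices to show that if $\coker(\sigma - 1)$ is $\O$-torsion-free then so is $\coker(\sigma^d - 1)$, where $d$ is a power of $p$.

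At this point I would use that $f$ has trivial nebentypus: the local representation $V$ at $v \mid \ell$ with $\ell \mid N$ is supercuspidal, or a (possibly ramified) twist of a special representation, or a ramified principal series, so its inertia invariants satisfy $\dim_\K V^{I_v} \le 1$; hence $M$ is $\O$-free of rank $\le 1$. If $M = 0$ there is nothing to prove. Otherwise $M = \O m$ with $\sigma m = \alpha m$ for a unique $\alpha \in \O$, and $\coker(\sigma - 1) = \O/(\alpha - 1)\O$, $\coker(\sigma^d - 1) = \O/(\alpha^d - 1)\O$. The first is torsion-free exactly when $\alpha = 1$ or $\alpha - 1 \in \O^\times$. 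In the former case $\alpha^d = 1$; in the latter, the reduction $\bar\alpha$ lies in $\kappa^\times$ and $\bar\alpha \ne 1$, so, $\#\kappa^\times$ being prime to $p$ and $d$ a power of $p$, also $\bar\alpha^{\,d} \ne 1$, i.e.\ $\alpha^d - 1 \in \O^\times$. Either way $\coker(\sigma^d - 1)$ is $\O$-torsion-free, and therefore $c_{v_n}(A) = 1$.

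I expect the main obstacle to be conceptual rather than computational: passing from the base field to a layer of the tower, $c_v(A) = 1$ does \emph{not} in general imply $c_{v_n}(A) = 1$ (a nontrivial unipotent part of $\Frob_v$ at the eigenvalue $1$ on a higher-rank $T^{I_v}$ would make the Tamagawa factors grow along the tower), so the reduction to the rank-$\le 1$ case via the explicit local description of $V|_{G_v}$ for $v \mid N$ is genuinely needed. Granting that, the remaining point is just the elementary fact that a $p$-power extension of a residue field of characteristic $p$ introduces no new nontrivial roots of unity. An essentially equivalent but more conceptual route identifies the restriction map $\honeur(K_v, A) \to \honeur(K_{n, v_n}, A)$ with multiplication by $1 + \sigma + \dots + \sigma^{d-1}$ on $A^{I_v}/(\sigma - 1)A^{I_v}$ and checks directly that its kernel and cokernel are $\O$-divisible.
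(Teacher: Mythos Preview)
Your starting formula for the Tamagawa factor is not correct. You write $c_v(A)=\#\coker(\Frob_v-1\mid T^{I_v})_\tors$, but the standard identification (Rubin, \emph{Euler Systems}, Lemma~I.3.2(iii), which is what the paper invokes) is
\[
\frac{\honeur(K_v,A)}{\honef(K_v,A)}\;\cong\;\frac{\mathcal{A}_{K_v}}{(\Frob_v-1)\mathcal{A}_{K_v}},\qquad
\mathcal{A}_{K_v}:=\frac{\ho(I_v,A)}{\ho(I_v,A)_\divisible},
\]
or equivalently $c_v(A)=\#\bigl(H^1(I_v,T)_\tors\bigr)^{\Frob_v}$; neither of these is $\coker(\Frob_v-1\mid T^{I_v})$. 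The discrepancy is already visible at an unramified place, where $\mathcal{A}_{K_v}=0$ so $c_v=1$, while your cokernel is $T/(\Frob_v-1)T$, a nontrivial finite group in general. More to the point for $v\mid N$: in the Steinberg case $T^{I_v}\cong\O$ with Frobenius acting by a unit $\alpha$, so your criterion reduces to whether $\alpha-1\in\O^\times$, whereas the true $c_v$ measures the $p$-part of the monodromy and is unrelated to $\alpha$. Your case ``$\alpha=1$'' would declare $c_v=1$ automatically, but that is precisely when $c_v$ can be large.

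Once you use the correct module $\mathcal{A}_{K_v}$, the detour through the local classification of $V|_{G_v}$ is unnecessary and your worry in the last paragraph evaporates. Since $K_n/K$ is unramified at $v\nmid p$ one has $I_{v_n}=I_v$, hence $\mathcal{A}_{K_{n,v_n}}=\mathcal{A}_{K_v}=:\mathcal{A}$, a \emph{finite} $\O$-module. Assumption~\ref{assumption:2} says $\Frob_v-1$ is surjective on $\mathcal{A}$, hence an automorphism; on the $\kappa$-vector space $\mathcal{A}/\p\mathcal{A}$ this means $1$ is not an eigenvalue of $\Frob_v$. As $\bar\kappa^\times$ has no $p$-torsion and $d=[K_{n,v_n}:K_v]$ is a $p$-power, $1$ is not an eigenvalue of $\Frob_v^{\,d}$ either, so $\Frob_{v_n}-1=\Frob_v^{\,d}-1$ is again an automorphism of $\mathcal{A}$ and $c_{v_n}(A)=1$. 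This is exactly the paper's argument, stated there rather tersely; no bound on $\rk_\O T^{I_v}$ is needed, because on a finite module surjectivity of $\sigma-1$ already excludes the unipotent obstruction you were guarding against.
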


\begin{proof}
For any $p$-adic field $L$ write $\mathcal{A}_L = \ho(I_L, A)/\ho(I_L, A)_\divisible$. By 
\cite[Lemma I.3.2(\emph{iii})]{rubin:euler-systems}
\[
\frac{\honeur(L, A)}{\honef(L, A)} \iso \frac{\mathcal{A}_L}{(\Frob_L - 1)\mathcal{A}_L}.
\]
Since by Assumption \ref{assumption:2}, $c_v(A) = 1$ for any $v \mid N$, then $\honeur(K_v, A) = {\honef(K_v, A)}$ and hence 
${\mathcal{A}_{K_v}}/{(\Frob_{K_v} - 1)\mathcal{A}_{K_v}} = 0$. But since $v \nmid p$, $K_n/K$ is unramified at $v$, hence $I_{n, v} = I_v$ and 
therefore $\mathcal{A}_{K_{n, v}} = \mathcal{A}_{K_v}$. Thus $\honeur(K_{n, v}, A) = {\honef(K_{n, v}, A)}$.
\end{proof}

\begin{proposition}\label{prop:comparison-h1}
$\honetildef(K_n, A) = \honef(K_n , A) = \hone_\str(K_n, A) = \hone_\Gr(K_n, A)$ for any $n \ge 0$.
\end{proposition}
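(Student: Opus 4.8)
The plan is to imitate the proof of the analogous statement for $V$ proved just above, replacing $V$ by $A$ and $V^{\pm}$ by $A^{\pm}$ throughout, and feeding in the vanishing results of this subsection — Lemma~\ref{lemma:hone(ho(Inv, A-))}, Lemma~\ref{lemma:ho(Env, A-)} and the equality $c_{v_n}(A)=1$ of the preceding lemma — wherever the argument for $V$ invoked Lemma~\ref{lemma_ho(Ev, V-)}. Concretely, I would first establish $\honetildef(K_n, A) = \honef(K_n, A)$ by appealing to the cofinite-module analogue, attached to the ordinary newform $f$, of the exact sequence of \cite[Proposition 12.5.9.2(\emph{iii})]{nekovar:selmer-complexes}, namely
\[
0 \longrightarrow \bigoplus_{v \mid p} \ho(K_{n, v}, A^-) \longrightarrow \honetildef(K_n, A) \longrightarrow \honef(K_n, A) \longrightarrow 0,
\]
whose leftmost non-trivial term vanishes by Lemma~\ref{lemma:ho(Env, A-)}. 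Here $\honef(K_n, A)$ is the Bloch--Kato Selmer group, and at the primes $v \mid N$ of bad reduction one uses Assumption~\ref{assumption:2} (i.e.\ the preceding lemma, $c_{v_n}(A)=1$) to identify its local conditions with the unramified ones, so that the two notions of Selmer group really do agree.

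Next I would deduce $\honef(K_n, A) = \hone_\str(K_n, A)$ from Proposition~\ref{prop:comparison-generalized-greenberg} applied with $E = K_n$ and $X = A$, which gives
\[
0 \to \tildecohomology^0_f(K_n, A) \to \ho(K_n, A) \to \bigoplus_{v \mid p} \ho(G_{n, v}, A) \to \honetildef(K_n, A) \to \hone_\str(K_n, A) \to 0 ;
\]
it then suffices to check $\ho(G_{n, v}, A) = 0$ for every $v \mid p$. From $0 \to A^+ \to A \to A^- \to 0$ and $\ho(G_{n,v}, A^-)=0$ (Lemma~\ref{lemma:ho(Env, A-)}) one gets $\ho(G_{n,v}, A) = \ho(G_{n,v}, A^+) \subseteq \ho(I_{n,v}, A^+)$, and the last group is $0$ because $I_{n,v}$ acts on $A^+$ through $\chi_p^{k/2}$, whose reduction modulo $\p$ is non-trivial on $I_{n,v}$ — indeed $\bar\chi_p(I_{n,v}) = \F_p^\times$ (as $[I_p:I_{n,v}]$ is a power of $p$) while $p-1 \nmid k/2$ by Assumption~\ref{assumption:1}(\emph{i}), exactly as in Lemma~\ref{lemma_ho(Ev, V-)} but now modulo $\p$. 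Finally, $\hone_\str(K_n, A) = \hone_\Gr(K_n, A)$ is immediate from the exact sequence $0 \to \hone_\str(K_n, A) \to \hone_\Gr(K_n, A) \to \bigoplus_{v \mid p} \hone\bigl(G_{n,v}/I_{n,v}, \ho(I_{n,v}, A^-)\bigr)$ recorded in Section~\ref{sec:selmer-groups}, since its right-hand term vanishes termwise by Lemma~\ref{lemma:hone(ho(Inv, A-))}. Stringing the three equalities together proves the proposition.

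I expect the main obstacle to be the first step: transporting Nekovar's comparison between his Selmer complexes and the Bloch--Kato Selmer groups from the representation $V$ to the cofinite-type module $A = V/T$, and, within that, verifying that at the primes $v \mid N$ the propagated Bloch--Kato local condition on $A$ genuinely coincides with the unramified one $\honeur(K_{n,v}, A)$ — which is precisely the point of Assumption~\ref{assumption:2} (via $c_{v_n}(A)=1$). Once that identification is in hand, everything else is a routine transcription of the $V$-case, the only ingredient not already isolated as one of the lemmas above being the elementary vanishing $\ho(I_{n,v}, A^+) = 0$ used in the second step.
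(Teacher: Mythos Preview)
Your steps corresponding to $\hone_\str = \hone_\Gr$ (your third step) and $\honetildef = \hone_\str$ (your second step) are essentially the paper's argument. One correction to the second step: the third term in the exact sequence of Proposition~\ref{prop:comparison-generalized-greenberg} is $\bigoplus_{v\mid p}\ho(G_{n,v},A^-)$, not $\ho(G_{n,v},A)$ --- the paper's statement of that proposition has a typo, but its own applications (both for $V$ and for $A$) use $X^-$. So Lemma~\ref{lemma:ho(Env, A-)} gives the vanishing directly, and your detour through $\ho(I_{n,v},A^+)=0$ is unnecessary (though correct).

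The real divergence is your first step. The paper does \emph{not} attempt to transport Nekov\'a\v{r}'s comparison \cite[12.5.9.2(iii)]{nekovar:selmer-complexes} from $V$ to $A$; that result is formulated for the rational representation and there is no ready-made cofinite analogue to cite. Instead, once $\honetildef(K_n,A)=\hone_\str(K_n,A)=\hone_\Gr(K_n,A)$ is in hand (your steps two and three), the paper closes the loop by proving $\hone_\Gr(K_n,A)=\honef(K_n,A)$ through a direct comparison of local conditions: at $v\nmid p$ one uses $c_{v_n}(A)=1$ exactly as you say, while at $v\mid p$ the paper invokes \cite[Lemma~5.4]{longo-vigni:control-theorems} to identify $\honef(K_{n,v},A)=\hone_\ord(K_{n,v},A)$. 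This route bypasses the obstacle you yourself flagged. So your proposal is not wrong in spirit, but the step you identified as the main difficulty is one the paper simply avoids by reordering the chain of equalities and quoting an existing local computation rather than a global exact sequence.
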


\begin{proof}
Observe that $\honetildef(K_n, A) = \hone_\str(K_n, A)$, by the exact sequence of Proposition \ref{prop:comparison-generalized-greenberg} applied to $E = K_n$
as $\ho(K_{n, v}, A^-) = 0$ for any $v_n \mid p$ by Lemma \ref{lemma:ho(Env, A-)}. Moreover by Lemma \ref{lemma:hone(ho(Inv, A-))} the exact sequence
\[
0 \to \hone_\str(K_n , A) \to \hone_\Gr(K_n, A) \to \bigoplus_{v_n \mid \, p} \hone\big(G_{n, v}/I_{n, v}, \ho(I_{n, v}, A^-)\big)  = 0
\]
shows that $\hone_\str(K_n, A) = \hone_\Gr(K_n, A)$. Finally we have $\hone_\Gr(K_n, A) = \honef(K_n, A)$ by comparing each local condition: 
in fact for any $v_n \nmid p$, $\honef(K_{n,v}, A) = \hone_\ur(K_{n, v}, A)$ as $c_{v_n}(A) = 1$  and for $v_n \mid p$ the proof of 
\cite[Lemma 5.4]{longo-vigni:control-theorems} shows (it is the injectivity of the first map considered there) that 
\[
\honef(K_{n, v}, A) = \ker\big(\hone(K_{n, v}, A) \to \hone(I_{n, v}, A^-)\big) = \hone_\ord(K_{n, v}, A).\qedhere
\]
\end{proof}

We may state a more precise theorem about generalized Selmer groups. We need first some technical lemmas about the vanishig of the $\cohomology^0$ of $A$ over 
the anticyclotomic tower.
\begin{proposition}\label{prop:ho-A[p]}
$\ho(K_n, A[p]) = 0$ for any $n \ge 0$.
\end{proposition}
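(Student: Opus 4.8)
The plan is to reduce the claim to Lemma~\ref{lemma:res-iso} by realising each layer $K_n$ as a subfield of a ring class field of $p$-power conductor. Recall from Section~\ref{sec:anticyclotomic-extension} that the canonical splitting $\Gal(K[p^\infty]/K) \cong \Z_p \times \Delta$ restricts, for every $n \ge 0$, to $\Gal(K[p^{n+1}]/K) \cong \Z/p^n\Z \times \Delta$, and that $K_\infty = (K[p^\infty])^\Delta$; hence $(K[p^{n+1}])^\Delta$ is the subextension of $K_\infty/K$ of degree $p^n$, that is $K_n$. So $K_n \subseteq K[p^{n+1}]$, whence $G_{K[p^{n+1}]} \subseteq G_{K_n}$ and therefore
\[
\ho(K_n, A[p]) = A[p]^{G_{K_n}} \subseteq A[p]^{G_{K[p^{n+1}]}} = \ho(K[p^{n+1}], A[p]).
\]
Now I would invoke Lemma~\ref{lemma:res-iso} with $M = 1$, applied to the ring class field of conductor $p^{n+1}$ --- this is legitimate because the proof of that lemma uses only that $K[m]/\Q$ is a solvable extension and that $p$ is non-exceptional (Assumption~\ref{assumption:1}) --- to obtain $\ho(K[p^{n+1}], A[p]) = 0$; the displayed inclusion then forces $\ho(K_n, A[p]) = 0$.

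As a cross-check, one can argue directly with the residual representation: by Assumption~\ref{assumption:1}(i)--(ii) one has $p \ge 5$ and the image of $\rho_{A[p]} \colon G_\Q \to \GL_2(\kappa)$ (where $\kappa$ is the residue field of $\O$) contains $\SL_2(\kappa)$; since $\Gal(K_n/\Q)$ is dihedral of order $2p^n$, hence solvable, and $\SL_2(\kappa)$ is perfect for $p \ge 5$, the image of $G_{K_n}$ still contains $\SL_2(\kappa)$, which fixes no nonzero vector of $A[p] \cong \kappa^2$. Either way the conclusion follows.

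I do not expect a genuine obstacle: both arguments fit in a paragraph. The only step that warrants an explicit line is the identification $K_n = (K[p^{n+1}])^\Delta$, which is immediate from the compatibility of the two splittings recorded in Section~\ref{sec:anticyclotomic-extension}; after that the first argument is purely formal and the second is a standard group-theoretic remark.
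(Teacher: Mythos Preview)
Your proposal is correct and rests on the same mechanism as the paper's proof: both use that the relevant extension of $\Q$ is solvable together with the big-image hypothesis (the external \cite[Lemma 3.10]{longo-vigni:control-theorems}, which is also what underlies Lemma~\ref{lemma:res-iso}). The only difference is cosmetic: the paper applies that lemma directly to $K_n$ (noting $K_n/\Q$ is solvable), whereas you route through the inclusion $K_n \subseteq K[p^{n+1}]$ to invoke Lemma~\ref{lemma:res-iso} as stated; your cross-check is essentially the content of the cited lemma.
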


\begin{proof}
It follows by \cite[Lemma 3.10]{longo-vigni:control-theorems} as $K_n/\Q$ is solvable.
\end{proof}

\begin{proposition}\label{prop:ho}
\begin{enumerate}
\item $\ho(K_n, A[p^m]) = 0$ for any $n \ge 0$, $m > 0$;
\item $\ho(K_n, A) = 0$ for any $n \ge 0$;
\item $\ho(K_\infty, A) = \ho(K_\infty, A[p^m]) = 0$ for any $m > 0$;
\item $\ho(K_n, V) = 0$, $\ho(K_n, T) = 0$ for any $n \ge 0$;
\item $\ho(K_n, V^\ast(1)) = 0$, $\ho(K_n, T^\ast(1)) = 0$ for any $n \ge 0$.
\end{enumerate}
\end{proposition}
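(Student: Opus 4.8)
The plan is to bootstrap everything from Proposition~\ref{prop:ho-A[p]}, which gives $\ho(K_n, A[p]) = 0$ for all $n \ge 0$. For part~(1) I would induct on $m$: the case $m = 1$ is Proposition~\ref{prop:ho-A[p]}, and for the inductive step one uses the short exact sequence of $G_{K_n}$-modules
\[
0 \to A[p^{m-1}] \to A[p^m] \xrightarrow{\; p^{m-1} \;} A[p] \to 0
\]
(the last map is well defined and surjective because $A$ is $p$-divisible, and its kernel is exactly $A[p^{m-1}]$). Taking $G_{K_n}$-invariants gives the left-exact sequence $0 \to \ho(K_n, A[p^{m-1}]) \to \ho(K_n, A[p^m]) \to \ho(K_n, A[p])$, whose outer terms vanish by the inductive hypothesis and by Proposition~\ref{prop:ho-A[p]}, so $\ho(K_n, A[p^m]) = 0$. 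Part~(2) then follows at once: $A = \bigcup_m A[p^m]$, and since an invariant element of $A$ already lies in some $A[p^m]$ we get $\ho(K_n, A) = \injlim_m \ho(K_n, A[p^m]) = 0$.

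For part~(3) the point is that $G_{K_\infty} = \bigcap_n G_{K_n}$, with quotient $G_K/G_{K_\infty} = \Gamma \cong \Z_p$ in which the subgroups $\Gamma^n$ form a fundamental system of neighbourhoods of the identity. Hence any element of a discrete torsion $G_K$-module fixed by $G_{K_\infty}$ is fixed by some $G_{K_n}$ (its stabilizer is open and contains $G_{K_\infty}$, so it contains $G_{K_n}$ for $n$ large). This yields $\ho(K_\infty, M) = \injlim_n \ho(K_n, M)$ for $M = A[p^m]$ and $M = A$, and combining with parts~(1) and~(2) gives $\ho(K_\infty, A[p^m]) = \ho(K_\infty, A) = 0$.

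For part~(4) I would instead pass to the inverse limit. Since $T \cong \projlim_m T/p^m T \cong \projlim_m A[p^m]$ as topological $G_{K_n}$-modules and $\ho(K_n, -) = \Hom_{G_{K_n}}(\O, -)$ commutes with inverse limits, $\ho(K_n, T) = \projlim_m \ho(K_n, A[p^m]) = 0$. (Alternatively: from $0 \to T \xrightarrow{p} T \to A[p] \to 0$ one gets $T^{G_{K_n}}/pT^{G_{K_n}} \hookrightarrow \ho(K_n, A[p]) = 0$, and since $T^{G_{K_n}}$ is finitely generated over the Noetherian local ring $\O$ with $p$ in its maximal ideal, Nakayama forces $T^{G_{K_n}} = 0$.) For $V = T \otimes_\O \K$: if $v \in V^{G_{K_n}}$ were nonzero, then $\pi^j v$ lies in the lattice $T$ for suitable $j$ and is a nonzero $G_{K_n}$-fixed element of $T$, contradicting $\ho(K_n, T) = 0$; hence $\ho(K_n, V) = 0$.

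Finally, part~(5) is immediate from part~(4) and the self-duality recalled in Section~\ref{sec:selfdual-rep}: the $G_\Q$-equivariant skew-symmetric non-degenerate pairing on $T$ gives a $G_\Q$-isomorphism $T^\ast(1) \cong T$, which restricts to a $G_{K_n}$-isomorphism (as $K_n \subseteq \bar{\Q}$, so $G_{K_n} \subseteq G_\Q$), and tensoring with $\K$ gives $V^\ast(1) \cong V$; therefore $\ho(K_n, T^\ast(1)) = \ho(K_n, T) = 0$ and $\ho(K_n, V^\ast(1)) = \ho(K_n, V) = 0$. The only points needing any care are the two interchange-of-limits steps — the colimit description of $\ho(K_\infty, -)$ in part~(3) and the limit description of $\ho(K_n, T)$ in part~(4) — and both are routine once the relevant discreteness/continuity is noted, so I do not anticipate a real obstacle here.
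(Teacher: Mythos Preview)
Your proposal is correct and follows essentially the same approach as the paper: bootstrap from Proposition~\ref{prop:ho-A[p]} via induction for (1), pass to direct limits for (2) and (3), and use self-duality for (5). The only cosmetic differences are that in (1) you use the sequence $0 \to A[p^{m-1}] \to A[p^m] \to A[p] \to 0$ while the paper argues directly that $px \in A[p^{m-1}]$ forces $x \in A[p]$, and in (4) you prove $\ho(K_n,T)=0$ first (via inverse limits or Nakayama) and then deduce the claim for $V$, whereas the paper reverses the order, showing $\ho(K_n,V)=0$ by projecting a nonzero fixed vector into $A$ and then using $T \subseteq V$; both orderings are equally valid.
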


\begin{proof}
(1) by induction: Proposition \ref{prop:ho-A[p]} is the base case. Suppose then $\ho(K_n, A[p^{m-1}])=0$ for $m > 1$; if $x \in \ho(K_n, A[p^m])$, then 
$px \in \ho(K_n, A[p^{m-1}]) = 0$ by the induction hypothesis. Therefore $x \in \ho(K_n, A[p])$, and so $x = 0$ again by Proposition \ref{prop:ho-A[p]}. 

(2) follows from (1) taking inductive limit over $m$ as $A \cong \injlim_m A[p^m]$ and the fact that $\ho$ commutes with $\injlim$. 
(3) then follows from (2) and (1) taking inductive limit over $n$. 
(4) follows from (2): Suppose that there is an element $0 \ne x \in V$ fixed by any $\sigma \in {G_{K_n}}$, then its image $\bar{x} \in A$ is still fixed. 
We may moreover assume $x \notin T$, up to multiplication by a suitable power of $p$ (the action of $\sigma$ is linear), i.e.~$\bar{x} \ne 0$. 
This proves the claim for $V$. The claim for $T$ follows because $T \subseteq V$, so $\ho(K_n, T) \subseteq \ho(K_n, V)$.
At last by (4) and the isomorphism $V \cong V^\ast(1)$ follows that $\ho(K_n, T^\ast(1)) \subseteq \ho(K_n, V^\ast(1)) = 0$ and hence (5) is proved.
\end{proof}

\begin{proposition}\label{prop:comparison}
For $V$ the self-dual Galois representation attached to a $\p$-ordinary newform, we have  
\[
\tildecohomology^i_f(K_n, A) =
\begin{cases}
\honef(K_n, A) &\text{for $i = 1$};\\
D\Big(\honetildef \big(K_n, T^\ast(1)\big)\Big) &\text{for $i = 2$};\\
0 &\text{for $i \ne 1, 2$}.
\end{cases}
\]
\end{proposition}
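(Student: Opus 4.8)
The plan is to obtain all four cases by assembling results already established, with essentially no new computation. The Selmer complex $\selmer(K_n, A)$ whose cohomology is $\tildecohomology^\bullet_f(K_n, A)$ is, by construction (Definition \ref{def:selmer-complex}), the shift by $[-1]$ of the cone of a morphism between complexes of continuous cochains that are supported in cohomological degrees $[0,2]$: the global term $\contcohocomplex(G_{K_n, S}, A)$ and each local term $\contcoho(G_v, A)$ and $U_v^+(A)$ vanish above degree $2$ since $p$ is odd and $K_n$ has no real places. Hence $\tildecohomology^i_f(K_n, A) = 0$ for $i < 0$ and for $i > 3$ comes for free, and only the degrees $i = 0, 1, 2, 3$ require attention.

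For $i = 1$ there is nothing left to prove: the equality $\honetildef(K_n, A) = \honef(K_n, A)$ is exactly Proposition \ref{prop:comparison-h1}. For $i = 0$, the exact sequence of Proposition \ref{prop:comparison-generalized-greenberg}, applied to $X = A$ over the base field $K_n$, exhibits $\tildecohomology^0_f(K_n, A)$ as a submodule of $\ho(K_n, A)$, which is $0$ by Proposition \ref{prop:ho}(2); thus $\tildecohomology^0_f(K_n, A) = 0$. For the two remaining degrees I would invoke the duality isomorphism of Theorem \ref{th:duality-rep}, which holds verbatim with $K_n$ in place of $K$ (it is deduced from Theorem \ref{th:duality}, valid over any number field contained in $K_S$, and $K_n \subseteq K_S$ by the ramification description in Remark \ref{rk:ramification-anticyclotomic}). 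It yields $\tildecohomology^2_f(K_n, A) \cong D\big(\tildecohomology^1_f(K_n, T^\ast(1))\big) = D\big(\honetildef(K_n, T^\ast(1))\big)$, which is the case $i = 2$, and $\tildecohomology^3_f(K_n, A) \cong D\big(\tildecohomology^0_f(K_n, T^\ast(1))\big)$. Since $T^\ast(1) \cong T$ by the self-duality recalled in Section \ref{sec:selfdual-rep}, $T^\ast(1)$ is again a $\p$-ordinary $\O$-adic representation, so Proposition \ref{prop:comparison-generalized-greenberg} applied to $X = T^\ast(1)$ embeds $\tildecohomology^0_f(K_n, T^\ast(1))$ into $\ho(K_n, T^\ast(1))$, which vanishes by Proposition \ref{prop:ho}(5); hence $\tildecohomology^3_f(K_n, A) = 0$, and all four cases are accounted for.

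The proof is essentially bookkeeping, so I do not expect a genuinely hard step; the one place calling for care is the use of the duality theorem — checking that it is legitimately applied over $K_n$ rather than merely $K$, that the degrees are matched correctly under $i \mapsto 3 - i$, and that $T^\ast(1) \cong T$ so that the ordinariness hypotheses and the vanishing statements of Proposition \ref{prop:ho} carry over to it. I expect the final write-up to be short.
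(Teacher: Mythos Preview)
Your proposal is correct and follows essentially the same approach as the paper: the case $i=1$ is Proposition \ref{prop:comparison-h1}, the cases $i=0$ and $i=3$ come from the inclusion $\tildecohomology^0_f \hookrightarrow \cohomology^0$ together with Proposition \ref{prop:ho}(2),(5), and the case $i=2$ is the duality of Theorem \ref{th:duality-rep}. The only cosmetic difference is that the paper obtains the vanishing outside $[0,3]$ from duality combined with the vanishing of $\tildecohomology^j_f$ in negative degrees, rather than directly from the cone description as you do.
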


\begin{proof}
We have $\tildecohomology^1_f(K_n, A) = \honef(K_n, A)$ by Proposition \ref{prop:comparison-h1}. By Proposition \ref{th:duality-rep}, there is an isomorphism 
$\tildecohomology^i_f(K_n, A) \cong D\big(\tildecohomology^{3 - i}_f(K_n, T^\ast(1))\big)$,  
for any $i \in \Z$. 
Therefore, as $\tildecohomology^{j}_f(K_n, A) = \tildecohomology^{j}_f(K_n, T^\ast(1)) = 0$ if $j < 0$ by definition, it follows that $\tildecohomology^i_f(K_n, A) = 0$ for $i \ne 0, 1, 2, 3$. 
Furthermore $\tildecohomology^3_f(K_n, A) = 0$, as it is the dual of $\tildecohomology^{0}_f(K_n, T^\ast(1))$, that is a submodule of $\ho(K_n, T^\ast(1))$ 
and the latter vanishes by Proposition \ref{prop:ho}(5). Similarly $\tildecohomology_f^0(K_n, A) = 0$ as it is a submodule of $\ho(K_n, A)$, that vanishes 
by Proposition \ref{prop:ho}(2).
\end{proof}

Taking the direct limit over $n$ we have the following:
\begin{proposition}\label{prop:comparison-iwasawa}
For $V$ the self-dual Galois representation attached to a $\p$-ordinary newform  	
\[
\tildecohomology^i_f(K_\infty, A) =
\begin{cases}
\honef(K_\infty, A), &\text{for $i = 1$};\\
D\Big(\honetildef \big(K_\infty, T^\ast(1)\big)\Big), &\text{for $i = 2$};\\
0 &\text{for $i \ne 1, 2$}.
\end{cases}
\]
\end{proposition}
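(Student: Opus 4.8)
The statement follows from Proposition \ref{prop:comparison} by passing to the direct limit over the layers $K_n$ of $K_\infty/K$ along the restriction maps. The key formal fact is that $\injlim$ over the directed family $\{K_n\}$ is exact on $\O$-modules, hence commutes with the cohomology of the Selmer complex; so $\tildecohomology^i_f(K_\infty, A) \cong \injlim_{n,\,\res}\tildecohomology^i_f(K_n, A)$ for every $i\in\Z$. For $i=1$ this identification was already recorded in Section \ref{sec:selmer-complexes-iwasawa}, and the same holds in all degrees by Nekovar's Iwasawa-theoretic formalism of Selmer complexes (\cite[Proposition 8.8.6]{nekovar:selmer-complexes}): $\contcohocomplex\big(\Gal(K_S/K_\infty), A\big)$ and the Greenberg local complexes for $A$ are the direct limits along $\res$ of the corresponding complexes over the $K_n$, and forming the mapping cone commutes with $\injlim$.

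Granting this, the four cases follow from Proposition \ref{prop:comparison}. For $i\ne 1,2$ that proposition gives $\tildecohomology^i_f(K_n, A)=0$ for all $n$, so $\tildecohomology^i_f(K_\infty, A)=\injlim_n 0=0$; alternatively $\tildecohomology^0_f(K_\infty, A)$ embeds into $\ho(K_\infty, A)=0$ (Proposition \ref{prop:ho}(3)), and the vanishing for $i=3$ and for $i\notin\{0,1,2,3\}$ follows from Theorem \ref{th:duality-iwasawa-rep} exactly as in the proof of Proposition \ref{prop:comparison}. For $i=1$, the case $i=1$ of Proposition \ref{prop:comparison} gives $\tildecohomology^1_f(K_n, A)=\honef(K_n, A)$ compatibly with restriction, so $\tildecohomology^1_f(K_\infty, A)=\injlim_{n,\,\res}\honef(K_n, A)=\honef(K_\infty, A)$ by the definition of the latter in Section \ref{sec:selmer-groups-iwasawa}.

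For $i=2$, Proposition \ref{prop:comparison} gives $\tildecohomology^2_f(K_n, A)=D\big(\honetildef(K_n, T^\ast(1))\big)$, and under the restriction maps on the left-hand side the corresponding transition maps on the system $\{\honetildef(K_n, T^\ast(1))\}$ are the corestriction maps (duality of restriction and corestriction). Since Pontryagin duality interchanges inverse and direct limits, $\tildecohomology^2_f(K_\infty, A)=\injlim_n D\big(\honetildef(K_n, T^\ast(1))\big)=D\big(\projlim_{n,\,\cores}\honetildef(K_n, T^\ast(1))\big)$, and the inner inverse limit is $\honetildefiw(K_\infty/K, T^\ast(1))=\honetildef(K_\infty, T^\ast(1))$ by the definition recalled in Section \ref{sec:selmer-complexes-iwasawa} (one can also deduce this case directly from Theorem \ref{th:duality-iwasawa-rep}). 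This proves the formula. The only delicate point is the claim of the first paragraph — that the finite-level generalized Selmer groups of $A$ glue into $\tildecohomology^i_f(K_\infty, A)$ in every degree, and with the restriction maps as transition maps, not merely in degree $1$; once this is in place everything else is a routine manipulation of direct limits and Pontryagin duals.
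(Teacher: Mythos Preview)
Your proof is correct and follows the same approach as the paper: the paper simply says ``taking the direct limit over $n$'' of Proposition \ref{prop:comparison}, and you have spelled out the details of why this passage to the limit is legitimate in each degree. The only thing the paper leaves implicit that you make explicit is the compatibility of the generalized Selmer cohomology with direct limits in all degrees and the interchange of Pontryagin duality with limits for the $i=2$ case.
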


In particular Theorem \ref{th:exact-control-theorem-representations} gives the following result.
\begin{theorem}[Exact Control Theorem]\label{th:exact-control-th}
The canonical map
\[
\honef(K_n, A) \iso \honef(K_\infty, A)^{\Gal(K_\infty/K_n)}
\]
is an isomorphism.
\end{theorem}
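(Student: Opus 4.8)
The plan is to deduce this directly from the abstract exact control theorem, Theorem \ref{th:exact-control-theorem-representations}, applied with base field $K_n$ in place of $K$. Observe first that $K_\infty/K_n$ is again a $\Z_p$-extension, with Galois group $\Gamma^n = \Gal(K_\infty/K_n)$, and that all the data used to set up the Iwasawa-theoretic Selmer complexes of Section \ref{sec:selmer-complexes-iwasawa} — in particular the $\p$-ordinarity of $V$ at the places above $p$ and the inclusion $K_\infty \subseteq K_S$ — continue to hold over $K_n$. Hence Nekovar's control spectral sequence (Theorem \ref{th:exact-control-theorem-general}), now for the Iwasawa algebra $\O\llbracket\Gamma^n\rrbracket$, and with it Theorem \ref{th:exact-control-theorem-representations}, holds verbatim with $K$ replaced by $K_n$ and $\Gamma$ by $\Gamma^n$, its edge morphism being the restriction map.

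The one hypothesis to verify is $\tildecohomology_f^0(K_n, A) = 0$, and this is already contained in the proof of Proposition \ref{prop:comparison}: the module $\tildecohomology_f^0(K_n, A)$ is a submodule of $\ho(K_n, A)$, which vanishes by Proposition \ref{prop:ho}(2). Granting this, Theorem \ref{th:exact-control-theorem-representations} over $K_n$ yields that the canonical map
\[
\tildecohomology_f^1(K_n, A) \longiso \tildecohomology_f^1(K_\infty, A)^{\Gamma^n}
\]
is an isomorphism of $\O$-modules.

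It then remains to translate this back into the language of Bloch--Kato Selmer groups. By Proposition \ref{prop:comparison} we have $\tildecohomology_f^1(K_n, A) = \honef(K_n, A)$, and by Proposition \ref{prop:comparison-iwasawa} we have $\tildecohomology_f^1(K_\infty, A) = \honef(K_\infty, A)$, the latter object being independent of the chosen base field as it is the injective limit over all finite layers. Substituting these identifications gives the claimed isomorphism $\honef(K_n, A) \iso \honef(K_\infty, A)^{\Gal(K_\infty/K_n)}$. I do not expect a serious obstacle: the only point requiring a little care is the legitimacy of changing the base field from $K$ to $K_n$ in Theorem \ref{th:exact-control-theorem-representations}, that is, checking that the general Iwasawa-theoretic formalism recalled in Section \ref{sec:anticyclotomic-iwasawa-theory} applies equally to the $\Z_p$-extension $K_\infty/K_n$ and that its edge map is the restriction map.
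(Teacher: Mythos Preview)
Your proposal is correct and follows exactly the paper's own approach: apply Theorem \ref{th:exact-control-theorem-representations} to the $\Z_p$-extension $K_\infty/K_n$ (after checking $\tildecohomology_f^0(K_n,A)=0$ via Proposition \ref{prop:ho}), and then identify the generalized Selmer groups with the Bloch--Kato ones using Propositions \ref{prop:comparison} and \ref{prop:comparison-iwasawa}. The paper's proof is simply the one-line version of what you wrote.
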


\begin{proof}
Apply Theorem \ref{th:exact-control-theorem-representations} to $K_\infty/K_n$ and combine with Proposition \ref{prop:comparison} and \ref{prop:comparison-iwasawa}.
\end{proof}

\begin{remark}\label{rk:coinvariants-new}
Recall that for any $\Lambda$-module $M$ the $\O$-module of co-invariants is defined to be $M_\Gamma = M/IM$, 
where $I$ is the ideal of $\Lambda$ generated by $\gamma - 1$, for a topological generator $\gamma$ of $\Gamma$. 
In terms of coinvariants the exact control theorem is equivalent to an isomorphism of $\Lambda$-modules 
\[
(\calX_\infty)_\Gamma \iso \calX,
\]
as $D(M^\Gamma) \cong D(M)_\Gamma$ (see for instance \cite[Lemma 5.15]{longo-vigni:control-theorems}).
\end{remark}

\subsection{Vanishing of $\shat(f/K_\infty)$}\label{sec:vanishing-sha-anticyclotomic}

In this section we obtain our main theorem. As a first step we show that we may extend the corank one of the Bloch--Kato Selmer 
group over the anticyclotomic extension $K_\infty/K$.
\newpage
\begin{theorem}\label{th:main-selmer-version}
Let $\calX$ be a free $\O$-module of rank $1$, then $\calX_\infty$ is a free $\Lambda$-module of rank $1$.
\end{theorem}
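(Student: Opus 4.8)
The plan is to deduce the statement formally from two results already in hand: the structure theorem of Longo and Vigni (Theorem~\ref{th:longo-vigni}) and the coinvariants form of the Exact Control Theorem (Theorem~\ref{th:exact-control-th}, restated in Remark~\ref{rk:coinvariants-new}). First I would record that $\calX_\infty$ is a finitely generated $\Lambda$-module: by Theorem~\ref{th:longo-vigni} there is an exact sequence $0 \to \ker\eta \to \calX_\infty \to \Lambda \oplus M \oplus M \to \coker\eta \to 0$ with $\ker\eta$, $\coker\eta$ finite and $M$ torsion, so $\calX_\infty$ is an extension of a submodule of the Noetherian module $\Lambda \oplus M \oplus M$ by a finite module, hence Noetherian. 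Next I would apply Nakayama's lemma over the Noetherian local ring $\Lambda$ with maximal ideal $\mfrak = (\pi, \gamma - 1)$: since
\[
\calX_\infty/\mfrak\calX_\infty \;=\; (\calX_\infty)_\Gamma \otimes_\O \kappa \;\cong\; \calX \otimes_\O \kappa \;\cong\; \kappa
\]
is one-dimensional over $\kappa$ — using Remark~\ref{rk:coinvariants-new} and the hypothesis $\calX \cong \O$ — the module $\calX_\infty$ is generated by a single element, i.e.\ $\calX_\infty \cong \Lambda/\afrak$ for some ideal $\afrak \subseteq \Lambda$.

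It then remains to show $\afrak = 0$, and here I would use the rank-one part of Theorem~\ref{th:longo-vigni}. Let $Q$ be the fraction field of the domain $\Lambda$. Tensoring the exact sequence above with the flat $\Lambda$-algebra $Q$ annihilates the finite modules $\ker\eta$, $\coker\eta$ and the torsion module $M$, so $\calX_\infty \otimes_\Lambda Q \cong \Lambda \otimes_\Lambda Q = Q$; hence $\calX_\infty$ has $\Lambda$-rank $1$. But if $\afrak \neq 0$ we may choose $0 \neq g \in \afrak$, and then $\calX_\infty \cong \Lambda/\afrak$ is a quotient of $\Lambda/g\Lambda$, which is a torsion $\Lambda$-module (since $g$ is invertible in $Q$), so $\calX_\infty$ would have rank $0$, a contradiction. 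Therefore $\afrak = 0$ and $\calX_\infty \cong \Lambda$, which is precisely the assertion that $\calX_\infty$ is free of rank one.

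I do not expect a genuine obstacle, as the whole argument is formal once Theorem~\ref{th:longo-vigni} and Theorem~\ref{th:exact-control-th} are available; the one point worth spelling out is that neither input suffices on its own. The control theorem alone forces only $\afrak \subseteq (\gamma-1)\Lambda$, which would still allow $\calX_\infty \cong \Lambda/(\gamma - 1) \cong \O$ — cyclic but not $\Lambda$-free — whereas the pseudo-isomorphism of Theorem~\ref{th:longo-vigni} by itself does not control the reduction $\calX_\infty/\mfrak\calX_\infty$. It is genuinely the combination of cyclicity (from the control theorem, together with $\calX \cong \O$) and $\Lambda$-rank one (from Longo--Vigni) that pins down $\calX_\infty \cong \Lambda$.
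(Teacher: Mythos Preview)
Your proof is correct and follows essentially the same two-step strategy as the paper: first obtain cyclicity of $\calX_\infty$ from the Exact Control Theorem via Nakayama, then rule out torsion using the Longo--Vigni structure theorem. The only cosmetic differences are that the paper applies Nakayama after reducing modulo $(\gamma-1)$ rather than modulo the full maximal ideal $\mfrak$, and argues the non-torsion step by examining $\im\eta$ and $\coker\eta$ directly rather than by tensoring with the fraction field of $\Lambda$; your explicit verification of finite generation and your closing remark on why both inputs are needed are nice additions.
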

 
\begin{proof}
By Theorem \ref{th:exact-control-th} and Remark \ref{rk:coinvariants-new}, $(\calX_\infty)_{\Gamma} \cong \calX \cong \O$, i.e.~there is an $x \in \calX_\infty$
such that its image in $(\calX_\infty)_\Gamma$ generates it as $\O$-module. Consider the short exact sequence
\[
0 \to x\Lambda \to \calX_\infty \to \calX_\infty/x\Lambda \to 0,
\]
Taking coinvariants, we see that the sequence
\[
(x\Lambda)_\Gamma \to (\calX_\infty)_\Gamma \to (\calX_\infty/x\Lambda)_\Gamma \to 0
\]
is exact. Therefore $({\calX_\infty}/{x \Lambda})_\Gamma = 0$, since the map $(x\Lambda)_\Gamma \to (\calX_\infty)_\Gamma$ is an isomorphism, 
and hence by Nakayama's lemma $\calX_\infty = x\Lambda$, i.e.~$\calX_\infty$ is a cyclic $\Lambda$-module. 

It is left to show that $\calX_\infty$ is not $\Lambda$-torsion. Suppose by contradiction that this is the case. 
Consider the morphism $\eta  \colon \calX_\infty \to \Lambda \oplus M \oplus M$ of Theorem \ref{th:longo-vigni}. The $\Lambda$-module   
$\calX_\infty$ is cyclic and torsion and so $\Im \, \eta = \alpha\Lambda$, with $\alpha$ a torsion element, i.e.~$\alpha \in M \oplus M$. Therefore 
\[
\coker \eta  = \frac{\Lambda \oplus M \oplus M}{\alpha\Lambda} = \Lambda \oplus \frac{M \oplus M}{\alpha\Lambda}
\]
is infinite, contradicting the fact that $\eta$ is a pseudo-isomorphism.
\end{proof}
 
The previous theorem can be rephrased in terms of the ($\p$-primary) Shafarevich--Tate group of $f$.
In order to do that we need to define $\imajt$, the analogous of the Mordell-Weil group of an elliptic curve, 
over $K_\infty$ and its layers $K_n$. 
As for the definition of $\imajt(K)$, in order to define it we assume that the restriction map
\[
\res_{K[p^{n+1}]/K_n} \colon \hone(K_n, T) \to \hone(K[p^{n+1}], T)^{\Gal(K[p^{n+1}]/K_n)}
\]
is an isomorhism for $n > 0$. 
\begin{definition}\label{def:imajt-anticyclotomic-layers}
We define for any $n > 0$,
\[
\imajt(K_n) := \res_{K[p^{n+1}]/K_n}^{-1}\bigl(\, \imajt(K[p^{n+1}])^{\Gal(K[p^{n+1}]/K_n)} \, \bigr).
\]
Note that $\res_{K_{n+1}/K_n} \colon \hone(K_n, T) \to \hone(K_{n+1}, T)$  restricts to a map $\res \colon \imajt(K_n) \inj \imajt(K_{n+1})$ for any $n\ge 0$, injective since $\ho(K_{n}, T) = 0$ by Proposition \ref{prop:ho}. We define then 
 \[\imajt(K_\infty) = {\displaystyle\injlim_{n, \res}} \imajt(K_n).\]
\end{definition}
In particular we have that $\imajt(K_n) \otimes \K/\O \subseteq \honef(K_n, A)$ for $n \ge 0$ and
\[
\imajt(K_\infty) \otimes \K/\O = \injlim_{n} \bigl(\imajt(K_n) \otimes \K/\O\bigr) \subseteq \injlim_n \honef(K_n, A) = \honef(K_\infty, A)
\]
Consider moreover the module of universal norms $N\imajt(K_\infty) = {\displaystyle\projlim_{n, \cores}} \imajt(K_n) \subseteq \honef(K_\infty, T)$.
\begin{definition}\label{def:shat-K-infty}
The $\p$-primary Tate-Shafarevich group $\shat(f/K_n)$ of $f$ over $K_n$ is defined, for any $n = 0, \dots, \infty$, by the exact sequence
\[
\begin{tikzcd}
0 \ar[r] & \imajt(K_n) \otimes \K/\O \ar[r] & \honef(K_n, A) \ar[r] & \shat(f/K_n) \ar[r] & 0.
\end{tikzcd} 
\]
\end{definition}

\begin{remark}
Note that under our assumptions the restriction is indeed an isomorphism: it follows by Lemma \ref{lemma:res-iso}, by the same 
argument of Remark \ref{rk:res-iso-T}. Therefore $\imajt(K_n)$ and $\shat(f/K_n)$ are well defined. 
In particular, since $\ho(K_n, A[p]) = 0$ by Proposition \ref{prop:ho-A[p]}, 
the same argument of Corollary \ref{cor:selmer-image-aj-free} shows that $\imajt(K_n)$ is a free $\O$-module of finite rank.
Moreover each $\imajt(K_n)$ is endowed with an action of $\Gal(K_n/K)$ and hence $\imajt(K_\infty) \otimes \K/\O$ and $N\imajt(K_\infty)$ have a structure of $\Lambda$-module, compatible with the inclusion respectively into $\honef(K_\infty, A)$ and $\hone(K_\infty, T)$.
\end{remark}

Using this language we may enhance Theorem \ref{th:main-selmer-version}.
\begin{theorem}\label{th:main-sha}
Suppose that $\imajt(K) \otimes \K/\O$  and $N\imajt(K_\infty)$ are nontrivial. If $\calX$ is free of rank $1$ over $\O$, then 
\[
\shat(f/K) = \shat(f/K_\infty) = 0, \quad \honef(K_\infty, A) = \imajt(K_\infty) \otimes \K/\O,
\] 
and the Pontryagin dual $\calX_\infty$ of the latter group is free of rank $1$ over $\Lambda$.  
\end{theorem}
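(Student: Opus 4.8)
The plan is to deduce Theorem \ref{th:main-sha} from Theorem \ref{th:main-selmer-version} together with the exact control theorem and the structure of the universal norm modules. First I would observe that, since $\calX = \honef(K, A)^\vee$ is free of rank $1$ over $\O$, Theorem \ref{th:besser} (via Remark \ref{rk:identification-modulo-p-M}, with $\calI_p = 0$, which holds precisely because $z_{f, K}$ is not divisible by $p$) gives $\shat(f/K) = 0$ and $\honef(K, A) = z_{f, K} \cdot \K/\O = \imajt(K) \otimes \K/\O$. This is the base of the argument; note this uses the hypothesis that $\imajt(K) \otimes \K/\O$ is nontrivial so that the corank-$1$ claim is meaningful. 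Then Theorem \ref{th:main-selmer-version} tells us $\calX_\infty$ is free of rank $1$ over $\Lambda$.

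The heart of the matter is then the identification $\honef(K_\infty, A) = \imajt(K_\infty) \otimes \K/\O$, equivalently $\shat(f/K_\infty) = 0$. I would prove this layer by layer. At each finite layer $K_n$, since $\calX$ is free of rank $1$ the exact control theorem (Theorem \ref{th:exact-control-th}) and Theorem \ref{th:main-selmer-version} give that $\honef(K_n, A)$ has $\O$-corank equal to $\rk_\Lambda \calX_\infty = 1$ (compute: $\honef(K_n, A)^\vee = (\calX_\infty)_{\Gal(K_\infty/K_n)}$, which is free of rank $1$ over $\Lambda_n = \O[\Gamma_n]$, hence free of rank $[K_n : K]$ over $\O$; but the divisible quotient has $\O$-corank $1$ after the $\Gamma_n$-action is accounted for — more precisely the divisible part $\honef(K_n, A)_\divisible$ has corank $1$ since $\calX_\infty/(\text{torsion})$ contributes and one tracks the cyclotomic-character twist as in \cite{nekovar:kolyvagin}). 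The key input is that $N\imajt(K_\infty)$ is nontrivial: by the Euler system norm relations for the classes $z_{f, p^{n+1}}$, the corestricted basic cycles assemble into a nonzero universal norm, giving $\rk_\Lambda N\imajt(K_\infty) = 1$; dualizing, $\imajt(K_\infty) \otimes \K/\O$ is a cofree $\Lambda$-module of corank $1$ contained in the cofree corank-$1$ module $\honef(K_\infty, A)$, and I would argue the quotient — which is $\shat(f/K_\infty)$ — is both cotorsion and, by the control theorem compatibility with the layer-$K$ vanishing $\shat(f/K) = 0$, has trivial $\Gamma$-coinvariants, hence vanishes by Nakayama for compact $\Lambda$-modules applied to its Pontryagin dual.

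Concretely, the cleanest route: let $Z = \shat(f/K_\infty)^\vee$, a finitely generated $\Lambda$-module. From the defining exact sequence for $K = K_0$ and the control theorem one gets $Z_\Gamma = 0$ (the snake lemma comparing the three-term sequences at level $K_\infty$ and level $K$, using $\shat(f/K) = 0$ and that $\imajt(K_\infty) \otimes \K/\O$ has the right coinvariants because $N\imajt(K_\infty)$ surjects onto $\imajt(K)$ up to finite error — this is where nontriviality of $N\imajt(K_\infty)$ enters). Then Nakayama gives $Z = 0$, i.e. $\honef(K_\infty, A) = \imajt(K_\infty) \otimes \K/\O$, and taking Pontryagin duals, $\calX_\infty = (\imajt(K_\infty) \otimes \K/\O)^\vee$, which is the free rank-$1$ $\Lambda$-module from Theorem \ref{th:main-selmer-version}. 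Finally $\shat(f/K_n) = 0$ for all $n$ follows by taking $\Gamma^n$-invariants, or directly from $\shat(f/K_\infty)^{\Gamma^n}=0$ and the control theorem, together with the fact that the natural map $\shat(f/K_n) \to \shat(f/K_\infty)^{\Gamma^n}$ has controlled (trivial, under our hypotheses) kernel and cokernel.

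The main obstacle I anticipate is the precise bookkeeping showing that $N\imajt(K_\infty)$ nontrivial forces $(\imajt(K_\infty) \otimes \K/\O)_\Gamma$ to match $\imajt(K) \otimes \K/\O$ up to the exact amount needed to kill $Z_\Gamma$: this requires knowing that universal norms of Heegner cycles control $\imajt(K)$, i.e.\ a statement like $\imajt(K)/\operatorname{cores}_{K_n/K}(\imajt(K_n))$ is finite and, combined with $\calX$ free of rank $1$, that the corank count leaves no room for a nontrivial cotorsion quotient. Here one leans on the analogous argument in \cite[Section 3]{nekovar:kolyvagin}, transported via the fact — already in hand from Section \ref{sec:comparison-selmer-vs-generalized} — that the Bloch–Kato, Greenberg, strict Greenberg and Nekovář Selmer groups all coincide in our setting, so Nekovář's abstract Iwasawa-theoretic machinery applies verbatim.
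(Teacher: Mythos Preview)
There are two problems with your argument.

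\textbf{First}, your opening step invokes Theorem~\ref{th:besser} with $\calI_p = 0$, i.e.\ the hypothesis that $z_{f,K}$ is not divisible by $p$. But that hypothesis is \emph{not} part of Theorem~\ref{th:main-sha}; the only assumptions here are that $\imajt(K)\otimes\K/\O$ and $N\imajt(K_\infty)$ are nontrivial and that $\calX$ is free of rank $1$ over $\O$. The paper obtains $\shat(f/K)=0$ directly from these: since $\calX\cong\O$ one has $\honef(K,A)\cong\K/\O$, and since $\imajt(K)$ is free of finite rank over $\O$ (Corollary~\ref{cor:selmer-image-aj-free}) with $\imajt(K)\otimes\K/\O\ne 0$, the inclusion $\imajt(K)\otimes\K/\O\cong(\K/\O)^j\subseteq\K/\O$ forces $j=1$ and equality. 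You are importing the hypotheses of Theorem~\ref{th:main} into the proof of Theorem~\ref{th:main-sha}, which reverses the logical order of the paper.

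\textbf{Second, and more seriously}, your argument for $\shat(f/K_\infty)=0$ has a genuine gap at exactly the point you flag as ``the main obstacle''. You want to show $Z_\Gamma=0$ for $Z=\shat(f/K_\infty)^\vee$ via a snake lemma, and for this you need that $(\imajt(K_\infty)\otimes\K/\O)^\Gamma$ matches $\imajt(K)\otimes\K/\O$. You assert this follows because ``$N\imajt(K_\infty)$ surjects onto $\imajt(K)$ up to finite error'', but nothing of the sort is provided by the mere nontriviality of $N\imajt(K_\infty)$, and there is no control theorem for $\imajt$ along the tower available here. Your appeal to \cite{nekovar:kolyvagin} does not close the gap: that paper's argument does not proceed via a control theorem for the Mordell--Weil analogue either.

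The paper takes a completely different route that avoids this problem. Writing $\calY_\infty=(\imajt(K_\infty)\otimes\K/\O)^\vee$ and $\calZ_\infty=\shat(f/K_\infty)^\vee$, one has $\calY_\infty\cong\calX_\infty/\calZ_\infty\cong\Lambda/\calZ_\infty$, so it suffices to show $\calY_\infty$ is not $\Lambda$-torsion. The key observation is the identification $\calY_\infty=\Hom_\O(\imajt(K_\infty),\O)$, which has no $p$-torsion. If $\calY_\infty$ were $\Lambda$-torsion, the absence of $p$-torsion would force it (via the division lemma in $\O\llbracket T\rrbracket$) to be finitely generated and free over $\O$; hence $\imajt(K_\infty)$ itself would be a free $\O$-module of finite rank, stabilizing at some finite layer $K_m$. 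But then for $n\ge m$ one computes $\cores_{K_{n+n'}/K_n}(\imajt(K_{n+n'}))=p^{n'}\imajt(K_n)$, and any element of $N\imajt(K_\infty)$ would be infinitely $p$-divisible in a finite free $\O$-module, hence zero --- contradicting the hypothesis $N\imajt(K_\infty)\ne 0$. This argument uses the nontriviality of $N\imajt(K_\infty)$ only at the very end, as the target of a contradiction, and never needs to compare coinvariants of $\imajt$ across the tower.
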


\begin{proof}
If $\calX \cong \O$, then $\honef(K, A) \cong \K/\O$. By Corollary~\ref{cor:selmer-image-aj-free}, $\imajt(K) = \O^{j}$ for some $j > 0$, therefore
\[
0 \ne \imajt(K) \otimes \K/\O \cong (\K/\O)^{j} \subseteq \honef(K, A) \cong \K/\O,
\] 
and hence $j = 1$ and the inclusion must be an equality. Thus $\shat(f/K) = 0$.

Consider now $K_\infty$. By Theorem \ref{th:main-selmer-version}, we already know that $\calX_\infty$ has rank $1$ over $\Lambda$; let us show 
that $\shat(f/K_\infty) = 0$. Let
\[
\calZ_\infty = D\bigl( \shat(f/K_\infty) \bigr), \qquad \calY_\infty = D\bigl(\imajt(K_\infty) \otimes \K/\O\bigr).
\]
By duality we have a short exact sequence
\[
\begin{tikzcd}
0 \ar[r] & \calZ_\infty \ar[r] & \calX_\infty \ar[r] & \calY_\infty \ar[r] & 0 
\end{tikzcd}
\]
and therefore our vanishing claim is equivalent to show that $\calY_\infty = \calX_\infty/\calZ_\infty \cong \Lambda/\calZ_\infty$ is not  $\Lambda$-torsion. In the rest of the proof we show that if $\calY_\infty$ were $\Lambda$-torsion, then $N\imajt(K_\infty)=0$, against our assumptions.

First observe that 
\begin{align*}
\calY_\infty &= \Hom_\O\bigl(\imajt(K_\infty) \otimes \K/\O, \K/\O\bigr) \\
&=\Hom_\O\bigl(\imajt(K_\infty), \Hom_\O(\K/\O, \K/\O)\bigr) \\
&=\Hom_\O\bigl(\imajt(K_\infty), \O\bigr) = \imajt(K_\infty)^\ast
\end{align*}
and that in particular $\calY_\infty[p] = \Hom_\O\bigl(\imajt(K_\infty), \O[p]\bigr) = 0$, since $\O[p] = 0$.

It follows that if $\calY_\infty$ were $\Lambda$-torsion, then it would be a free $\O$-module of finite type. 
Indeed, there would be a nonzero ideal $J$ of $\O\llbracket T \rrbracket$ such that $\calY_\infty \cong \O\llbracket T \rrbracket/J$: 
since $\calY_\infty[p]  = 0$ we can find $f \in J$ such that $f \notin p \O\llbracket T \rrbracket$. By the division lemma  (see \cite[Lemma 5.3.1]{neu:cnf}) $\O\llbracket T \rrbracket/(f)$ would be a free $\O$-module of finite rank and hence also its quotient $\O\llbracket T \rrbracket/J$ would be finitely generated over $\O$. It would also be free following the same argument of the proof of Corollary~\ref{cor:selmer-image-aj-free}, since $\calY_\infty[p] = 0$.

Therefore, since $\calY_\infty = \imajt(K_\infty)^\ast$, also $\imajt(K_\infty)$ would be a free $\O$-module of finite rank; 
in particular there would be an $m$ such that $\imajt(K_\infty) = \res_{K_\infty/K_m}\Bigl( \imajt(K_m) \Bigr)$, then
\begin{align*}
\cores_{K_{n+n'}/K_n}\bigl(\imajt(K_{n+n'})\bigr) &= \cores_{K_{n+n'}/K_n} \circ \res_{K_{n+n'}/K_n} \Bigl( \imajt(K_n) \Bigr)\\
&  = [K_{n+n'}: K_n] \imajt(K_n) = p^{n'} \, \imajt(K_n) 
\end{align*}
for any $n'>0, n \ge m$. Therefore $N\imajt(K_\infty) = 0$, contradicting our hypothesis. Consider indeed $(\epsilon_n)_n \in \imajt(K_\infty)$, by the above formula we would have for any $n \ge m$ and $n'>0$ that $\epsilon_n = \cores_{K_{n+n'}/K_n} (\epsilon_{n+n'})$ is divisible by $p^{n'}$: this forces $\epsilon_n = 0$ since $\imajt(K_n)$ 
is a finite free $\O$-module and therefore it does not have infinitely divisible nonzero elements.
Hence $\epsilon_n = 0$ for any $n\ge0$.
\end{proof}

\newpage
Finally we may gather everything together and state our main theorem.
\begin{theorem}\label{th:main}
Suppose that the basic generalized Heegner cycle $z_{f, K}$ is non-torsion and that 
$z_{f, K} \notin p\hone(K, T)$. Then $\shat(f/K)=0$ and 
\[
\imajt(K) \otimes \K/\O = \honef(K, A) = z_{f, K} \cdot \K/\O,
\] 
moreover $\shat(f/K_\infty) = 0$ and $\honef(K_\infty, A) = \imajt(K_\infty) \otimes \K/\O$, and the Pontryagin dual $\calX_\infty$ of the latter group is free of rank $1$ over $\Lambda$.
\end{theorem}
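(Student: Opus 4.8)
The plan is to combine the Euler-system input of Section~\ref{sec:vanishing-of-sha} with the Iwasawa-theoretic result of Section~\ref{sec:consequences}. The first step is the statement over $K$: the assumption $z_{f,K}\notin p\hone(K,T)$ means, by Remark~\ref{rk:identification-modulo-p-M}, exactly that $\calI_p=0$; since $p$ is non-exceptional (Assumption~\ref{assumption:1}(\emph{i})--(\emph{iii})) and $z_{f,K}$ is non-torsion, Theorem~\ref{th:besser} then gives at once that $\shat(f/K)=0$ and
\[
\imajt(K)\otimes\K/\O=\honef(K,A)=z_{f,K}\cdot\K/\O .
\]
This already settles the first half of the theorem, and it remains to pass to $K_\infty$.

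Next I would feed this into Theorem~\ref{th:main-sha}. Its running hypothesis that $\calX=\honef(K,A)^\vee$ be free of rank $1$ over $\O$ is now automatic: $\imajt(K)$ is a free $\O$-module by Corollary~\ref{cor:selmer-image-aj-free} and $z_{f,K}$ is non-torsion, so $\O\cdot z_{f,K}\cong\O$, hence $z_{f,K}\cdot\K/\O\cong\K/\O$ (using injectivity of $\imajt(K)\otimes\K/\O\hookrightarrow\honef(K,A)$ from Definition~\ref{def:imajt-ring-class-fields}), and therefore $\calX\cong(\K/\O)^\vee$ is free of rank $1$. Of the two nontriviality hypotheses of Theorem~\ref{th:main-sha}, the first, $\imajt(K)\otimes\K/\O\neq 0$, is immediate from the displayed equality and the non-torsion-ness of $z_{f,K}$. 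The second, $N\imajt(K_\infty)=\projlim_{n,\cores}\imajt(K_n)\neq 0$, is the crux, and to establish it I would exhibit an explicit nonzero universal norm coming from the generalized Heegner cycles: the classes $z_{f,p^{n+1}}=\AJ^\et_{p^{n+1}}(\Delta_{p^{n+1}})$ lie in $\imajt(K[p^{n+1}])$, so their $\alpha$-stabilized corestrictions $\tilde z_n:=\alpha^{-n}\cores_{K[p^{n+1}]/K_n}(z_{f,p^{n+1}})$ lie in $\imajt(K_n)$ by the very definition of $\imajt(K_n)$ (Definition~\ref{def:imajt-anticyclotomic-layers}), and the norm relations of \cite{castella-hsieh:heegner-cycles-p-adic-l-functions} — equivalently the $\Lambda$-adic Kolyvagin system underlying Theorem~\ref{th:longo-vigni} — make $(\tilde z_n)_n$ corestriction-compatible, hence an element of $N\imajt(K_\infty)$. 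Its bottom term $\tilde z_0$ is, by the $p$-adic norm relation at conductor $1$ (recall $p$ splits in $K$), equal to $z_{f,K}$ times an explicit factor in $\O$ built from $\alpha$ and the Frobenii at the two primes of $K$ above $p$, and this factor is a unit because $\alpha-1\in\O^\times$ by Assumption~\ref{assumption:3}. Since $\imajt(K)$ is torsion-free and $z_{f,K}\neq 0$, it follows that $\tilde z_0\neq 0$, so $(\tilde z_n)_n\neq 0$ and $N\imajt(K_\infty)\neq 0$.

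With all hypotheses of Theorem~\ref{th:main-sha} verified, it yields $\shat(f/K_\infty)=0$, $\honef(K_\infty,A)=\imajt(K_\infty)\otimes\K/\O$, and that the Pontryagin dual $\calX_\infty$ is free of rank $1$ over $\Lambda$ (the last also being Theorem~\ref{th:main-selmer-version}); together with the first paragraph this is exactly the assertion. I expect the only genuine difficulty to be the step $N\imajt(K_\infty)\neq 0$: reducing Theorem~\ref{th:main} to Theorems~\ref{th:besser}, \ref{th:main-sha} and \ref{th:main-selmer-version} is formal, but tracking the generalized Heegner norm relations carefully enough to see both that the $\Lambda$-adic class lands in $N\imajt(K_\infty)$ and that it specializes at the bottom layer to a \emph{unit} multiple of $z_{f,K}$ is where the hypotheses on $z_{f,K}$ (non-torsion) and on $a_p$ (Assumption~\ref{assumption:3}) are really used for the $K_\infty$-part of the statement.
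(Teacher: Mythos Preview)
Your proposal is correct and follows the paper's approach almost verbatim: reduce everything to Theorem~\ref{th:besser} (via $\calI_p=0$) and Theorem~\ref{th:main-sha}, after verifying that $\calX\cong\O$, that $\imajt(K)\otimes\K/\O\neq 0$, and that $N\imajt(K_\infty)\neq 0$. The only point of divergence is the last nonvanishing: the paper simply invokes the $\Lambda$-adic class $\tilde\kappa_1$ built in \cite{longo-vigni:generalized} from the classes $\alpha_n=\cores_{K[p^{n+1}]/K_n}(z_{f,p^{n+1}})$ and cites \cite[Theorem~4.12]{longo-vigni:generalized} for $\tilde\kappa_1\neq 0$, whereas you propose to argue directly that the bottom stabilized class is a unit multiple of $z_{f,K}$. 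Your route is self-contained and ties the nonvanishing neatly to the hypotheses already on the table (non-torsion $z_{f,K}$, Assumption~\ref{assumption:3}), but two details deserve care: the naive stabilization $\alpha^{-n}\alpha_n$ is generally \emph{not} corestriction-compatible---one needs the Perrin-Riou style two-term stabilization used in \cite{longo-vigni:generalized}---and the ``factor in $\O$'' relating $\tilde z_0$ to $z_{f,K}$ really comes from the $p$-Euler factor after corestriction to $K$ has killed the Frobenius elements of $\Gal(K[1]/K)$; only then is it a scalar, and its $\p$-adic unit-ness is what uses $\alpha\not\equiv 1\bmod\p$. With those points tightened, your argument is a legitimate alternative to the black-box citation.
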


\begin{proof}
This is the combination of Theorem \ref{th:besser} and Theorem \ref{th:main-sha}. Indeed, since $z_{f, K} \in \imajt(K)$ is non-torsion, 
then it is nontrivial in $\imajt(K) \otimes \K/\O$ and $z_{f, K} \cdot \K/\O$ has corank $1$ over $\O$. Moreover $N\imajt(K_\infty) \ne 0$, since it contains an element $\tilde{\kappa}_1$, constructed by Longo and Vigni in \cite{longo-vigni:generalized} from the classes 
$\alpha_n = \cores_{K[p^{n+1}]/K_n}(z_{f, p^{n+1}}) \in \imajt(K_n)$: $\tilde{\kappa}_1 \ne 0$ by \cite[Theorem 4.12]{longo-vigni:generalized}. 
\end{proof}
\printbibliography[heading=bibintoc]

\end{document}